\newtheorem{thm}{theorem}[section]
\newtheorem{theorem}[thm]{Theorem}
\newtheorem{proposition}[thm]{Proposition}
\newtheorem{lemma}[thm]{Lemma}
\newtheorem{corollary}[thm]{Corollary}
\newtheorem{remark}[thm]{Remark}
\DeclareMathOperator{\Supp}{Supp}
\DeclareMathOperator{\End}{End}
\DeclareMathOperator{\diag}{diag}
\begin{document}

\title{Degree-inverting involutions on matrix algebras}

\author{
Luís Felipe Gonçalves Fonseca
\thanks{\texttt{luisfelipe@ufv.br}}
\\
\\
Instituto de Ciências Exatas e Tecnlógicas \\
Universidade Federal de Viçosa \\
Florestal, MG, Brazil
\\
\\
Thiago Castilho de Mello \thanks{\texttt{tcmello@unifesp.br}}
\thanks{supported by Fapesp grant No. 2014/10352-4, Fapesp grant No.2014/09310-5, and CNPq grant No. 461820/2014-5. }
\\
\\
Instituto de Ci\^encia e Tecnologia\\
Universidade Federal de S\~ao Paulo\\
S\~ao Jos\'e dos Campos, SP, Brazil}

\maketitle

\begin{abstract}
	Let $F$ be an algebraically closed field of characteristic zero, and $G$ be a finite abelian group. If $A=\oplus_{g\in G} A_g$ is a $G$-graded algebra, we study degree-inverting involutions on $A$, i.e., involutions $*$ on $A$ satisfying $(A_g)^*\subseteq A_{g^{-1}}$, for all $g\in G$. We describe such involutions for the full $n\times n$ matrix algebra over $F$ and for the algebra of $n\times n$ upper triangular matrices.

\end{abstract}

\section*{Introduction}

Gradings on algebras have been intensively studied in the last two decades, in the associative and non-associative cases. One of the main purposes of such studies was to obtain a description of all possible gradings on a specific algebra or a class of algebras up to graded isomorphisms. As examples, gradings on finite dimensional simple associative, Lie and Jordan algebras have been studied since then.

The finite dimensional simple associative algebras over an algebraically closed field are exactly the matrix algebras over such field. The first results concerning gradings on matrix algebras were obtained by Knus \cite{Knus}.
Some other works on the next decades have appeared, but only in the end of the last century a systematic study of gradings on matrix algebras was started. Initially with a paper of D\u asc\u alescu, et. al \cite{Dascalescu} and later with other authors and papers, e.g.,\cite{BahturinSegalZaicev,BahturinZaicev, BarascuDascalescu, Boboc}.

Gradings on matrix algebras have a wide range of applications. The special case of superalgebras, i.e., algebras graded by the cyclic group of order two, has been extensively studied and was applied to the study of identities of associative algebras, in particular to the solution of the Specht problem for associative algebras over fields of characteristic zero \cite{Kemer}. Since then, the study of graded identities on associative and non-associative algebras has become of interest of many authors and an extensive number of papers about this subject has been written.

The knowledge of gradings on matrix algebras was also applied to obtain gradings on simple finite dimensional Lie and Jordan algebras See, for example, \cite{BahturinShestakovZaicev,Elduque,ElduqueKotchetov}. For this purpose, the authors in \cite{BahturinShestakovZaicev} considered involutions on matrix algebras and obtained gradings on the Lie algebra of skew-symmetric elements and on the Jordan algebra of symmetric elements. This was shown to be possible if and only if such involutions act on homogeneous elements by preserving their degrees. Following this idea, in the mentioned paper the authors describe gradings on a matrix algebra with involution, satisfying such property, what they call \emph{involution preserving gradings}.

When dealing with an elementary grading in a matrix algebra, one can easily see that the transpose involution inverts degrees of homogeneous elements.
In the present paper, we study a problem related this fact. We are interested in describing the involutions on a $G$-graded matrix algebra acting on homogeneous elements by inverting their degrees. We call such involutions \emph{degree-inverting involutions}. The approach to such problem is similar to that of \cite{BahturinShestakovZaicev}, i.e., we use the decomposition of gradings on matrix algebras as a tensor product of an elementary-graded matrix algebra by a fine-graded matrix algebra and we study each case separately. Nevertheless we use other methods and results, such as the graded version of the Skolem-Noether Theorem.

For the algebra of $n\times n$ upper triangular matrices, the only possible $G$-gradings are the elementary ones as proved in \cite{Valenti-Zaicev0}, and any involution on $M_n(F)$ can be obtained by a conjugation of the involution that reflects elements along the secondary diagonal as showed in \cite{DiVKoshLaS}. The approach in this case is by the well-known duality between $G$-gradings and $\hat G$-actions, where $\hat G$ is the dual group of $G$.

The paper is organized as follows.

Section 1 is devoted to the description of the well-known duality between bilinear forms and involutions on matrix algebras. In Section 2 we recall the description of gradings on matrix algebras. In Section 3 we present some results of \cite{BahturinShestakovZaicev} and \cite{BahturinZaicev} about involutions preserving gradings. Some of this results are used in Section 4, where we present our main result (Theorem \ref{maintheorem}) on the description of degree-inverting involutions, and the necessary background to achieve it. Finally, on Section 5, we deal with the same problem for the algebra of $n\times n$ upper triangular matrices. The mentioned results are valid for algebraically closed fields of characteristic zero and for a finite abelian group $G$.
 
\section{Involutions on matrix algebras and bilinear forms}

In this section we recall the duality between involutions on the $n\times n$ matrix algebra over the field $F$, $M_n(F)$, and nonsingular bilinear forms on the vector space $V=F^n$. We identify $M_n(F)$ with $\End (V)$.

An involution on an algebra $A$ is an antiautomorphism of order two, i.e., a linear map $^*:A\longrightarrow A$ satisfying $(ab)^*=b^*a^*$ and $(a^*)^*=a$, for all $a$, $b\in A$.

Let $V^{*}$ be the dual space of the vector space $V$. A bilinear form \linebreak $b: V \times V \rightarrow F$ is called nonsingular if the induced map 
\[\begin{array}{ccccccc}
\widehat{b}: &  V & \rightarrow & V^{*}\\
&  x & \mapsto     & \widehat{b}(x):& V & \longrightarrow & F\\
& & &  & y & \longmapsto & b(x,y)
\end{array}\] is an
isomorphism of linear spaces. 

Let $f: V \rightarrow V$ be a linear
transformation. Recall the transpose of $f$, $f^{t}: V^{*} \rightarrow
V^{*}$ is a linear application defined by:
$f^{t}(\phi)= \phi\circ f$ for all $\phi \in V^{*}$.

For a non-singular bilinear form $b:V \times V \rightarrow F$, we define the following map
\[ \begin{array}{cccc}
\sigma_{b}: & End_{F}(V)  &\longrightarrow & End_{F}(V) \\
&  f          &\longmapsto     & \widehat{b}^{-1}\circ f^{t}\circ \widehat{b}
\end{array}\]

One can observe that $\sigma_{b}$ is an antiautomorphism of
$End_{F}(V)$. The map $\sigma_{b}$ is called \emph{adjoint 	antiautomorphism} with respect to the nonsingular
bilinear form $b$. The map $\sigma_b$ is clearly F-linear . Alternatively, it is well known that $\sigma_{b}$ can be defined by:

\[b(x,f(y)) = b(\sigma_{b}(f)(x),y) \text{, for all } x,y \in V\]

Next, we present a well known result whose proof can be found, for instance, in \cite{InvolutionsBook}.

\begin{proposition}
	The map which associates to each nonsingular bilinear form $b$ on
	$V$ its adjoint antiautomorphism $\sigma_{b}$ induces a one-to-one
	correspondence between equivalence classes of nonsingular bilinear
	forms on $V$ modulo multiplication by a factor in $F^{*}$ and linear
	antiautomorphism of $End_{F}(V)$. Under this correspondence,
	$F$-linear involutions on $End_{F}(V)$ correspond to non-singular
	bilinear forms which are either symmetric os skew-symmetric.
\end{proposition}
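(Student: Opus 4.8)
The plan is to establish the correspondence in two stages: first show the map $b \mapsto \sigma_b$ is well-defined and injective on equivalence classes, then show it is surjective onto antiautomorphisms, and finally characterize which forms yield involutions.

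First I would verify that $\sigma_b$ depends on $b$ only up to a scalar multiple. If $b' = \lambda b$ for $\lambda \in F^*$, then $\widehat{b'} = \lambda \widehat{b}$, so $\sigma_{b'}(f) = (\lambda\widehat{b})^{-1} \circ f^t \circ (\lambda \widehat{b}) = \widehat{b}^{-1} \circ f^t \circ \widehat{b} = \sigma_b(f)$, giving a well-defined map on equivalence classes. For injectivity, I would suppose $\sigma_b = \sigma_{b'}$ and deduce that $\widehat{b}^{-1}\widehat{b'}$ commutes with $f^t$ for every $f$; since $\{f^t : f \in \End_F(V)\} = \End_F(V^*)$ and the center of $\End_F(V^*)$ consists of scalar maps, $\widehat{b}^{-1}\widehat{b'} = \lambda\,\mathrm{id}$, so $b' = \lambda b$. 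For surjectivity, given any linear antiautomorphism $\tau$ of $\End_F(V)$, the map $f \mapsto \tau(f)^t$ (interpreting $\tau(f)^t$ appropriately) is an \emph{automorphism}, and by Skolem--Noether it is conjugation by some isomorphism $\phi : V \to V^*$; setting $b(x,y) = \phi(x)(y)$ recovers a nonsingular bilinear form with $\sigma_b = \tau$.

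For the involution characterization, I would use the alternative defining identity $b(x,f(y)) = b(\sigma_b(f)(x),y)$. Applying $\sigma_b$ twice and chasing the identity gives $b(x,y) = b(\sigma_b^2(f)(x), \ldots)$ type relations; more directly, $\sigma_b$ is an involution iff $\sigma_b^2 = \mathrm{id}$. Computing $\sigma_b^2 = \widehat{b}^{-1} \circ (f^t)^t \circ \widehat{b}$ after identifying $V^{**}$ with $V$ canonically, one finds that $\sigma_b^2 = \mathrm{id}$ forces a symmetry condition on $\widehat{b}$. Concretely, I would introduce the transpose form $b^{\mathrm{op}}(x,y) = b(y,x)$, show that $\sigma_{b^{\mathrm{op}}} = \sigma_b$ would fail unless $b^{\mathrm{op}} = \lambda b$ for some scalar, and then conclude that $\sigma_b^2 = \mathrm{id}$ is equivalent to $b$ being proportional to $b^{\mathrm{op}}$, i.e.\ $b(y,x) = \lambda b(x,y)$. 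Iterating this relation yields $\lambda^2 = 1$, so $\lambda = \pm 1$, which is precisely the dichotomy between symmetric and skew-symmetric forms.

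The main obstacle I anticipate is the careful handling of the double-dual identification $V \cong V^{**}$ needed to make sense of $\sigma_b^2$ and $(f^t)^t$; one must check that the canonical evaluation isomorphism interacts correctly with $\widehat{b}$ so that no spurious transpose is introduced. The surjectivity step via Skolem--Noether is conceptually clean but requires phrasing the antiautomorphism as an automorphism into the dual setting correctly. Since the proposition is stated as well known with a reference to \cite{InvolutionsBook}, I would keep the verification of the scalar-multiple invariance and the symmetric/skew-symmetric dichotomy as the explicit content, and invoke the standard structure theory for the correspondence itself.
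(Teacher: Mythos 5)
The paper does not actually prove this proposition: it is presented as a well-known result with a pointer to \cite{InvolutionsBook}, so there is no in-paper argument to compare against, and your sketch reconstructs the standard proof (essentially the one in that reference). It is sound in outline: scalar invariance of $b\mapsto\sigma_b$; injectivity from the fact that an operator commuting with every $f^t$ (and $\{f^t : f\in \End_F(V)\}=\End_F(V^*)$) must be scalar; surjectivity by converting the antiautomorphism $\tau$ into the algebra isomorphism $f\mapsto\tau(f)^t$ onto $\End_F(V^*)$; and the involution dichotomy from $\widehat{b}^t=\lambda\widehat{b}$ with $\lambda^2=1$. Three small repairs are needed to make it precise. (i) In the injectivity step, the operator that commutes with every $f^t$ must live in $\End_F(V^*)$, so it is $\widehat{b'}\circ\widehat{b}^{-1}$, not $\widehat{b}^{-1}\circ\widehat{b'}$ (the latter is an endomorphism of $V$, so the commutation statement does not typecheck); the conclusion $\widehat{b'}=\lambda\widehat{b}$ is unchanged. (ii) Skolem--Noether as usually stated concerns automorphisms of a central simple algebra; what you need is the variant that every isomorphism $\End_F(V)\to\End_F(V^*)$ is conjugation by some isomorphism $\phi:V\to V^*$, which follows by composing with one fixed such conjugation and then applying Skolem--Noether to the resulting automorphism. (iii) With the paper's convention $\sigma_b(f)=\widehat{b}^{-1}\circ f^t\circ\widehat{b}$, the form recovered from $\phi$ is $b(x,y)=\phi(y)(x)$ rather than $\phi(x)(y)$: chasing the identifications gives $\widehat{b}=\phi^t$, not $\widehat{b}=\phi$, and the two differ by exactly the transpose twist unless $b$ is already symmetric or skew-symmetric. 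This is the double-dual bookkeeping you flagged as the main obstacle, and it is a real (though easily fixed) discrepancy, not merely a notational one. None of these affects the overall structure of the argument.
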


\begin{corollary}
	Let $*$ be an involution on $M_{n}(F)$ and let $b: F^{n} \times F^{n}
	\rightarrow F$ be the non-singular bilinear form corresponding to $*$. If $\Phi$ is the matrix representation of $b$ with respect to some fixed linear basis of $V$, then in the matrix form, $*$ can be written as 
	\[X^{*} = \Phi^{-1}X^{t}\Phi.\]
	Moreover, $\Phi$ is uniquely defined by $*$ up to a scalar factor.
\end{corollary}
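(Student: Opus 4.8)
The plan is to translate the coordinate-free definition of the adjoint antiautomorphism into matrix language and then exploit the symmetry type of $\Phi$ furnished by the preceding Proposition. First I would fix the basis of $V$ in which $b$ is represented by $\Phi$, so that $b(u,v)=u^{t}\Phi v$ for coordinate column vectors $u,v$, with $\Phi_{ij}=b(e_i,e_j)$. Writing $X$ for the matrix of a linear map $f$ and $X^{*}$ for the matrix of $\sigma_b(f)$, I would substitute into the alternative characterization $b(x,f(y))=b(\sigma_b(f)(x),y)$ to obtain $x^{t}\Phi X y=(X^{*}x)^{t}\Phi y=x^{t}(X^{*})^{t}\Phi y$ for all $x,y$. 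Since this holds identically, it forces $\Phi X=(X^{*})^{t}\Phi$, and hence $X^{*}=(\Phi^{-1})^{t}X^{t}\Phi^{t}$.

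The key step is then to remove the transposes on $\Phi$. By the previous Proposition, an $F$-linear involution $*$ corresponds to a bilinear form that is either symmetric or skew-symmetric, so $\Phi^{t}=\varepsilon\Phi$ with $\varepsilon\in\{+1,-1\}$. Taking inverses gives $(\Phi^{-1})^{t}=\varepsilon^{-1}\Phi^{-1}=\varepsilon\Phi^{-1}$, and substituting into the expression above yields $X^{*}=\varepsilon\Phi^{-1}X^{t}\,\varepsilon\Phi=\varepsilon^{2}\Phi^{-1}X^{t}\Phi=\Phi^{-1}X^{t}\Phi$, since $\varepsilon^{2}=1$. The point worth emphasizing is that the symmetric and skew-symmetric cases collapse to the same clean formula precisely because the two occurrences of $\varepsilon$ cancel.

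For the uniqueness clause I would argue directly rather than invoking the bijection: if $\Psi$ also satisfies $X^{*}=\Psi^{-1}X^{t}\Psi$ for every $X\in M_n(F)$, then $\Phi^{-1}X^{t}\Phi=\Psi^{-1}X^{t}\Psi$, so $\Psi\Phi^{-1}$ commutes with $X^{t}$ for all $X$; as $X^{t}$ ranges over all of $M_n(F)$, the matrix $\Psi\Phi^{-1}$ lies in the center and is therefore scalar, giving $\Psi=\lambda\Phi$ for some $\lambda\in F^{*}$. (Alternatively one reads uniqueness off the one-to-one correspondence of the Proposition, modulo the scalar factor.) I do not anticipate a genuine obstacle here; the only delicate point is bookkeeping of the transpose/inverse conventions and the sign $\varepsilon$, which must be tracked carefully so that the cancellation in the skew-symmetric case is transparent.
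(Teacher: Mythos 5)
Your proof is correct. Note, though, that the paper itself gives no argument for this corollary at all: it is stated as an immediate consequence of the preceding Proposition (whose proof is delegated to the Book of Involutions), namely that the stated correspondence between antiautomorphisms and nonsingular forms, restricted to involutions, lands on symmetric or skew-symmetric forms, and that the correspondence is bijective modulo scalars. What you have done instead is make the content of that correspondence explicit in coordinates: the identity $b(x,f(y))=b(\sigma_b(f)(x),y)$ becomes $\Phi X=(X^{*})^{t}\Phi$, giving $X^{*}=(\Phi^{-1})^{t}X^{t}\Phi^{t}$, and the symmetry type $\Phi^{t}=\varepsilon\Phi$ with $\varepsilon=\pm1$ (which is the one place you genuinely need the Proposition) makes the two signs cancel, yielding $X^{*}=\Phi^{-1}X^{t}\Phi$ uniformly in both cases. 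Your uniqueness argument via the center of $M_n(F)$ (if $\Phi^{-1}X^{t}\Phi=\Psi^{-1}X^{t}\Psi$ for all $X$, then $\Psi\Phi^{-1}$ commutes with everything, hence is scalar) is likewise a direct replacement for reading uniqueness off the bijection. The trade-off is that your version is self-contained and makes the convention bookkeeping (where the transposes of $\Phi$ go, and why symmetry is needed to get the clean formula with $\Phi$ rather than $\Phi^{t}$) completely explicit, whereas the paper's treatment buys brevity by outsourcing everything to the cited reference; both are legitimate, and yours would serve a reader who wants to verify the matrix formula without consulting the reference.
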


Let $\Phi$ be the matrix associated with the non-singular bilinear form $\varphi$, and $\Psi$ be the matrix associated with the non-singular bilinear form $\psi$. Then $\Phi \otimes \Psi$ is the matrix associated to the bilinear form $\varphi \otimes \psi$.

Now, consider $R = M_{n}(F)$ with involution $*:R \rightarrow R$
written as a tensor $R = R_{1}\otimes R_{2}$, where $R_{1} =
M_{k}(F),R_{2} = M_{l}(F)$, $R_{1}^{*} = R_{1}$ and $R_{2}^{*} =
R_{2}$. It was mentioned above, there exist matrices $\Phi$ and
$\Psi$ such that for all $X\in M_k(F)$ and $Y\in M_l(F)$, 
\[X^{*} = \Phi^{-1}X^{t}\Phi \quad \text{ and } \quad
Y^{*} = \Psi^{-1}Y^{t}\Psi.\]

Let $A \in M_{k}(F), B \in M_{l}(F)$. It is known that the
involution of Kronecker product $(A \otimes B)^{*}$ is $A^{*}\otimes
B^{*}$. Hence,
\begin{align*}
(A \otimes B)^{*} & = (\Phi_{1}^{-1}A^{t}\Phi_{2}^{-1}) \otimes
(\Phi_{2}^{-1}B^{t}\Phi_{2}) \\
& = (\Phi_{1}^{-1}\otimes
\Phi_{2}^{-1})(A \otimes B)(\Phi_{1}\otimes\Phi_{2}) \\
& =	(\Phi_{1}\otimes \Phi_{2})^{-1}(A \otimes B)(\Phi_{1}\otimes
\Phi_{2}),
\end{align*}
and it follows that the involution $*$ corresponds to the bilinear form $\overline \varphi$ on $F^n$ with matrix $\Phi_{1} \otimes \Phi_{2}$. And $\overline \varphi$ is the tensor product of bilinear forms $\varphi$ and $\psi$.  

A combination of the above remarks give us Lemma 1 of \cite{BahturinShestakovZaicev}:

\begin{lemma}
	Let $R = M_{n}(F)$ be a matrix algebra with an involution $*: R
	\longrightarrow R$. Let $R = R_{0}\otimes R_{1} \otimes \ldots \otimes
	R_{k}$ where $R_{i} = M_{n_{i}}(F)$ and $R_{i}^{*} = R_{i}, i =
	0,\ldots,k$. Then there exist non-degenerate symmetric or skew-symmetric bilinear forms $\varphi_{0},\ldots,\varphi_{k}$ on $F^{n_0}$, $\dots$, $F^{n_k}$, respectively, such that, for each $i=0,1,\dots, k$, the restriction of $*$ to $R_i$ corresponds to $\varphi_i$, $i=0,1,\dots,k$, and the involution $*$ on $R$ corresponds to $\varphi=\varphi_0\otimes\cdots \otimes \varphi_k$.
\end{lemma}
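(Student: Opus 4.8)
The plan is to combine the Corollary, which classifies each individual involution on a single matrix factor, with the tensor-product computation carried out just before the statement, and to reduce everything to decomposable tensors.

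First I would observe that for each $i$ the condition $R_i^* = R_i$ guarantees that the restriction $*|_{R_i}$ maps $R_i$ into itself. Being the restriction of an antiautomorphism of order two to an invariant subalgebra, $*|_{R_i}$ is itself an antiautomorphism of $R_i \cong M_{n_i}(F)$ of order two, hence an involution. Applying the Corollary to each factor, I obtain non-singular bilinear forms $\varphi_i$, which are symmetric or skew-symmetric by the Proposition, whose matrices $\Phi_i$ satisfy $X^* = \Phi_i^{-1} X^t \Phi_i$ for every $X \in R_i$. This produces the forms claimed in the statement and already settles the assertion about the restrictions.

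The crux is then to show that $*$ acts componentwise on decomposable tensors, i.e. that $(A_0 \otimes \cdots \otimes A_k)^* = A_0^* \otimes \cdots \otimes A_k^*$. Here I would use that under the identification $R = R_0 \otimes \cdots \otimes R_k$ the factors commute: writing the decomposable tensor as a product $a_0 a_1 \cdots a_k$ with $a_i \in R_i$ mutually commuting, the antiautomorphism property gives $(a_0 \cdots a_k)^* = a_k^* \cdots a_0^*$, and since each $a_i^* \in R_i$ (here the hypothesis $R_i^* = R_i$ is genuinely needed) again lies in a distinct commuting factor, the order may be restored to yield $a_0^* \cdots a_k^*$. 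This is exactly the componentwise formula; for two factors it is the statement quoted as known in the preceding discussion, and the general case follows by the same argument or by iteration.

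With the componentwise action in hand, the remaining step is the Kronecker-product bookkeeping already performed above for two factors, now iterated. Substituting $A_i^* = \Phi_i^{-1} A_i^t \Phi_i$ and using $(A_0 \otimes \cdots \otimes A_k)^t = A_0^t \otimes \cdots \otimes A_k^t$ together with the multiplicativity of $\otimes$ with respect to products and inverses, I would obtain
\[
(A_0 \otimes \cdots \otimes A_k)^* = \Phi^{-1}(A_0 \otimes \cdots \otimes A_k)^t \Phi, \qquad \Phi = \Phi_0 \otimes \cdots \otimes \Phi_k.
\]
Since the decomposable tensors span $R$ and both sides are $F$-linear, the identity $X^* = \Phi^{-1} X^t \Phi$ holds for all $X \in R$. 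Finally, by the remark preceding the statement, $\Phi = \Phi_0 \otimes \cdots \otimes \Phi_k$ is the matrix of $\varphi = \varphi_0 \otimes \cdots \otimes \varphi_k$; as a tensor product of symmetric or skew-symmetric forms this form is again symmetric or skew-symmetric, so by the Corollary it corresponds precisely to $*$. I expect the componentwise-action step to be the only genuine conceptual obstacle, the rest reducing to the linear-algebraic identities already displayed in the text.
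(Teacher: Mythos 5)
Your proof is correct and follows essentially the same route as the paper, which presents this lemma as ``a combination of the above remarks'': restricting $*$ to each invariant factor, invoking the bilinear-form correspondence, and carrying out the Kronecker-product computation $(A\otimes B)^* = (\Phi_1\otimes\Phi_2)^{-1}(A\otimes B)^t(\Phi_1\otimes\Phi_2)$. The only difference is that you supply the commuting-factors argument justifying $(A_0\otimes\cdots\otimes A_k)^* = A_0^*\otimes\cdots\otimes A_k^*$, a fact the paper simply quotes as known.
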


\section{Gradings on matrix algebras}

In this section we recall basic results on the description of group gradings on $M_n(F)$.

If $A$ is an $F$-algebra and $G$ is a group, a $G$-grading on $A$ is a decomposition of $A$ as a direct sum of subspaces $A=\oplus_{g\in G}A_g$, indexed by elements of the group $G$, which satisfy $A_gA_h\subseteq A_{gh}$, for any $g, h\in G$. If $a\in A_g$, for some $g\in G$, we say that $a$ is homogeneous of degree $g$ and we denote $\deg (a)=g$. The support of the grading, is the subset of $G$, $\text{Supp}(A)=\{g\in G\,;\, A_g\neq \{0\}\}$. 

If $1\leq i,j\leq n$, we denote by $e_{ij}$ the matrix with $1$ on the position $(i,j)$, and and $0$ elsewhere. We call them \emph{elementary matrices}. For each $g\in G$, let $R_g\subseteq M_n(F)$ be the subspace generated by the elementary matrices $e_{ij}$ for $i$ and $j$ satisfying $g_{i}^{-1}g_{j}=g$.
Then $M_n(F)=\oplus_{g\in G} R_g$ is a $G$-grading on $M_n(F)$ called \emph{elementary grading defined by $(g_1,\dots,g_n)$}.

We recall a known result from \cite{Dascalescu}.

\begin{theorem}
	If $G$ is any group, a $G$-grading of $M_{n}(F)$ is elementary if and only if all matrix units $e_{ij}$ are homogeneous.
\end{theorem}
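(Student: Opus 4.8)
The plan is to prove the two implications separately, with essentially all of the work lying in the converse. The forward implication is immediate from the definitions: if the grading is the elementary grading defined by some tuple $(g_1,\dots,g_n)$, then by construction $e_{ij}\in R_g$ with $g=g_i^{-1}g_j$, so every matrix unit is homogeneous.

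For the converse, suppose every $e_{ij}$ is homogeneous and write $d_{ij}:=\deg(e_{ij})\in G$; this is well defined because the homogeneous components form a direct sum, so a nonzero homogeneous element has a unique degree, and each $e_{ij}\neq 0$. The strategy is to extract from the multiplication table of the matrix units a set of relations on the $d_{ij}$ and then to exhibit a tuple $(g_1,\dots,g_n)$ realizing the grading as elementary. First I would record that $e_{ii}$ is a nonzero idempotent, so from $e_{ii}=e_{ii}^2$ lying in both $A_{d_{ii}}$ and $A_{d_{ii}^2}$ together with the uniqueness of degrees we get $d_{ii}^2=d_{ii}$, hence $d_{ii}=1_G$. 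Next, from $e_{ij}e_{jk}=e_{ik}$ together with the grading axiom $A_gA_h\subseteq A_{gh}$ and the directness of the sum, I obtain the key relation $d_{ij}d_{jk}=d_{ik}$ for all $i,j,k$. Specializing $k=i$ and using $d_{ii}=1_G$ yields $d_{ji}=d_{ij}^{-1}$.

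With these relations in hand I would set $g_i:=d_{1i}$ for $i=1,\dots,n$. Then $e_{i1}e_{1j}=e_{ij}$ gives $d_{i1}d_{1j}=d_{ij}$, and since $d_{i1}=d_{1i}^{-1}=g_i^{-1}$ and $d_{1j}=g_j$, I conclude $d_{ij}=g_i^{-1}g_j$ for all $i,j$. This is precisely the defining condition of the elementary grading attached to $(g_1,\dots,g_n)$.

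Finally I would argue that matching degrees on matrix units is enough. The matrix units form a linear basis of $M_n(F)$, and each lies in the component indexed by $g_i^{-1}g_j$ in both the given grading and the elementary grading defined by $(g_1,\dots,g_n)$; since each $A_g$ is then forced to equal the span of the matrix units of degree $g$, the two gradings coincide. The only point requiring care throughout is the repeated appeal to the directness of the decomposition $M_n(F)=\oplus_{g\in G}A_g$, which is what legitimizes reading off a unique degree from each homogeneous product; beyond this bookkeeping there is no serious obstacle.
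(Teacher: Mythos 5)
Your proof is correct and complete. Note that the paper does not prove this theorem at all --- it is stated as a known result recalled from \cite{Dascalescu} --- so there is no internal argument to compare against; your route (deriving $d_{ii}=1_G$ from idempotency, $d_{ij}d_{jk}=d_{ik}$ from the products of matrix units, setting $g_i=d_{1i}$ to get $d_{ij}=g_i^{-1}g_j$, and then invoking directness of the decomposition to force each component $A_g$ to equal the span of the matrix units of degree $g$) is precisely the standard argument for this equivalence, and the one point you flag as delicate, the repeated use of uniqueness of degrees of nonzero homogeneous elements, is indeed the only thing that needs care.
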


Recall that the elementary $G$-grading defined by $(g_1,\dots,g_n)\in G^n$ on the algebra $M_n(F)$ can be regarded as the induced grading on the algebra of all linear endomorphisms of a $G$-graded vector space $V=\oplus_{g\in G}V_g$, of dimension $n=\sum \dim(V_g)$. The elements of $V_g$ are called homogeneous of degree $g$, and if $v\in V_g$, we write $\deg(v)=g$. A linear transformation $f:V\longrightarrow V$ is called homogeneous of degree $h$ if $f(V_g)\subseteq V_{hg}$, for all $g$.
Let $h_1,\dots,h_k$ be all distinct elements of the group $G$ among $g_1,\dots,g_n$.
If $\pi_g:V\longrightarrow V_g$ is the canonical projection then $\pi_gf\pi_h$ is a homogeneous transformation of degree $gh^{-1}$ for any $f\in \End(V)$ and $f=\sum_{g,h\in G}\pi_gf\pi_h$ is the decomposition of $f$ into its homogeneous components. By Fixing a homogeneous basis of $V$, one obtains an isomorphism $\End V\longrightarrow M_n(F)$ of graded $F$-algebras. If one chooses distinct homogeneous basis of $V$, one gets graded automorphisms of $M_n(F)$. In order to obtain an elementary $G$-grading on $M_n(F)$ induced by $(g_1,\dots,g_n)$, it is enough to  take $V=V_1\oplus \cdots\oplus V_n$ with $\deg V_i=g_i^{-1}$, for $i\in\{1,\dots,n\}$.

Another important kind of grading is the so-called fine grading. A G-grading on an algebra $A=\oplus_{g\in G}A_g$ is a \emph{fine grading} if $\dim A_g\leq 1$, for all $g\in G$.

In \cite{BahturinZaicev}, the authors prove the following result about fine gradings.

\begin{theorem}
	If $M_n(F)$ is endowed with a fine $G$-grading, then $\Supp(R)$ is a subgroup of $G$. Furthermore, all its non-zero homogeneous elements are invertible.
\end{theorem}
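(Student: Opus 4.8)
The plan is to prove two things about a fine $G$-grading on $R = M_n(F)$: first, that $\Supp(R)$ is a subgroup of $G$, and second, that every nonzero homogeneous element is invertible. These statements are tightly linked, so I would establish them in tandem. The central observation is that because the grading is fine, each nonzero homogeneous component $R_g$ is one-dimensional, so I may pick a single nonzero generator $X_g$ with $R_g = F X_g$ whenever $g \in \Supp(R)$.

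\medskip

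First I would show that the product of two nonzero homogeneous elements is nonzero. Take $g, h \in \Supp(R)$ with generators $X_g$ and $X_h$. The product $X_g X_h$ lies in $R_{gh}$, which has dimension at most $1$. The key step is to rule out $X_g X_h = 0$. Here I would exploit the simplicity of $M_n(F)$: the two-sided ideal generated by a homogeneous element is a graded ideal, and since $M_n(F)$ is simple it must be all of $R$; this forces each $X_g$ to be invertible, because a noninvertible matrix cannot generate the whole algebra as a two-sided ideal while the homogeneous structure keeps the generated ideal graded. More directly, I expect to argue that each $X_g$ is invertible by a dimension/trace count: the map $Y \mapsto X_g Y$ is an injective (hence bijective) endomorphism of $R$ once we know no product of homogeneous elements vanishes, and conversely invertibility of the $X_g$ immediately gives $X_g X_h \neq 0$. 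The cleanest route is to first prove invertibility, then derive the subgroup property as a consequence.

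\medskip

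To prove invertibility I would consider, for a fixed $g \in \Supp(R)$, the powers $X_g, X_g^2, X_g^3, \dots$. Each $X_g^m$ is homogeneous of degree $g^m$. Since $G$ is finite (it is a finite abelian group in the paper's standing hypotheses), there is a smallest $m$ with $g^m = e$, the identity of $G$, so $X_g^m \in R_e$. Because $M_n(F)$ is finite-dimensional, the noninvertible case would force some power $X_g^k$ to be nilpotent, and I would derive a contradiction: if $X_g$ were not invertible, then by finite-dimensionality its powers eventually satisfy $X_g^{k} = 0$ for some $k$, but tracking degrees shows the chain of nonzero powers cannot terminate unless it cycles through $R_e$ with an invertible element, contradicting nilpotency. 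The most efficient formulation uses that $R_e$ is itself a subalgebra: $R_e$ is a finite-dimensional algebra whose nonzero elements I can show are invertible (a one-dimensional or division-like component), and then every $X_g$ has a power landing invertibly in $R_e$, forcing $X_g$ itself to be invertible. The hard part will be this step — establishing that nonzero homogeneous elements cannot be nilpotent or zero-divisors — and I expect to lean on the simplicity of the full matrix algebra together with the finiteness of $G$ to close it.

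\medskip

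Once invertibility is in hand, the subgroup property follows quickly. For $g, h \in \Supp(R)$, the element $X_g X_h$ is a nonzero element of $R_{gh}$ (nonzero because both factors are invertible), so $gh \in \Supp(R)$; thus $\Supp(R)$ is closed under multiplication. For inverses, $X_g^{-1}$ is homogeneous of degree $g^{-1}$ (since $X_g X_g^{-1} = I \in R_e$ forces $\deg(X_g^{-1}) = g^{-1}$), and it is nonzero, so $g^{-1} \in \Supp(R)$. Finally $e \in \Supp(R)$ because $I = X_g X_g^{-1}$ is a nonzero homogeneous element of degree $e$. Hence $\Supp(R)$ is a subgroup of $G$, completing the proof.
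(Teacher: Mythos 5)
Your proposal has a genuine gap at its central step: ruling out nilpotent homogeneous elements. There are two concrete problems. First, the claim that ``if $X_g$ were not invertible, then by finite-dimensionality its powers eventually satisfy $X_g^k=0$'' is false for matrices in general: $e_{11}$ is not invertible, yet no power of it vanishes. What would actually make this step work in the graded setting is an observation you never state: since $1\in R_e$ and $\dim R_e\leq 1$, one has $R_e=F\cdot 1$; so if $m$ is the order of $g$ in $G$, then $X_g^m=\lambda\cdot 1$ for some $\lambda\in F$, and non-invertibility of $X_g$ forces $\lambda=0$. Second, and more seriously, once you arrive at the possibility that $X_g$ is nilpotent, you have no argument to exclude it. ``Tracking degrees shows the chain of nonzero powers cannot terminate'' is not a proof, and in fact no degree-tracking argument can close this gap: the algebra $F[x]/(x^2)$ with its fine $\mathbb{Z}_2$-grading ($R_e=F\cdot 1$, $R_{\bar 1}=Fx$) satisfies every hypothesis you actually use --- fine grading, finite abelian group, finite dimension --- and yet $x$ is a nonzero homogeneous nilpotent element. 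So the simplicity of $M_n(F)$ must enter the argument in an essential way, and the only place you invoke it rests on a false statement: you assert that ``a noninvertible matrix cannot generate the whole algebra as a two-sided ideal,'' but every nonzero matrix generates $M_n(F)$ as a two-sided ideal --- that is precisely what simplicity means --- so this observation yields no obstruction at all.

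For the record (the paper quotes this theorem from Bahturin--Zaicev without proof, so there is no in-paper argument to compare against), the standard way to use simplicity correctly is: given $0\neq X_g\in R_g$, simplicity gives $RX_gR=R$, so $1=\sum_i x_iX_gy_i$ where all $x_i,y_i$ may be taken homogeneous; projecting this identity onto the degree-$e$ component and discarding vanishing terms produces homogeneous $x,y$ with $xX_gy\in R_e\setminus\{0\}$, i.e.\ $xX_gy=\lambda\cdot 1$ with $\lambda\neq 0$. Then $x$ has a right inverse and $y$ has a left inverse, hence both are invertible (we are in finite dimension), and $X_g=\lambda x^{-1}y^{-1}$ is invertible. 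Your final paragraph --- deducing that $\Supp(R)$ is a subgroup once invertibility of nonzero homogeneous elements is known, including the check that $X_g^{-1}$ is homogeneous of degree $g^{-1}$ --- is correct, but it is the easy half of the argument.
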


For an algebraically closed field, $F$, a particular case of a fine grading on $M_n(F)$ is the so called $\varepsilon$-grading.
If $\varepsilon\in F$ is a primitive $n$-th root of 1, define the following $n \times n$ matrices over $F$:

\begin{equation}\label{XaXb}
X_a = \begin{pmatrix} \varepsilon^{n-1} & 0 & \ldots & 0 & 0 \\
0 & \varepsilon^{n-2} & \ldots & 0 & 0 \\
\vdots & \vdots &\ddots &\vdots & \vdots\\
0 & 0 & \ldots & \varepsilon & 0 \\
0 & 0 & \ldots & 0& 1 \\
\end{pmatrix} \quad
\text{and} \quad
X_b = \begin{pmatrix} 0 & 1 & 0 & \ldots & 0 \\
0 & 0 & 1 & \ldots & 0 \\
\vdots & \vdots &\vdots &\ddots & \vdots\\
0 & 0 & 0 & \ldots & 1 \\
1 & 0 & 0 & \ldots & 0
\end{pmatrix}
\end{equation}

It is easy to see  $\{X_a^{i}X_b^{j}| 1 \leq i,j \leq n\}$ is a linear basis of $M_{n}(F)$ and that all products of these basis elements are uniquely defined by the relations
\[\begin{array}{lcr} X_aX_b = \varepsilon X_bX_a & \text{ and } & X_a^{n} = X_b^{n} = I\end{array}\]

Now, consider $G = \mathbb{Z}_{n}\times \mathbb{Z}_n$. For $g = (\overline{i},\overline{j}) \in G$, we set $C_{g} = A^{i}B^{j}$ and denote $R_{g} = span_{F}\{C_{g}\}$. In light of the above relations, $R = \bigoplus_{g\in G} R_{g}$ is a $G$-grading on $R$. This grading is called \emph{$\varepsilon$-grading}.

The next construction allows one to induce a $G$-grading on the tensor product of a $G$-graded algebra $A$, with the algebra $M_n(F)$ endowed with an elementary $G$-grading.

Let $A$ be any $G$-graded algebra, $B = M_{n}(F) = \bigoplus_{g \in G}B_g$ be the matrix algebra over $F$ endowed with an   elementary grading induced by $\overline{g} = (g_{1},\ldots,g_{n})\in G^{n}$. So, direct computations show that $A\otimes B=\oplus_{g\in G}(A\otimes B)_g$ is a $G$-grading on $A\otimes B$ if one sets
\[(A \otimes B)_{g} = \text{span}_F \{a \otimes e_{ij}| a \in A_{h},
g_{i}^{-1}hg_{j} = g\}.\]

The grading defined above is called \emph{induced}.

We now recall results of Bahturin, Sehgal and Zaicev \cite[Theorems 5 and 6]{BahturinSegalZaicev}, on the description of abelian gradings on matrix algebras over an algebraically closed field of characteristic zero.

\begin{theorem}\label{gradings}
	Let $R = M_{n}(F) = \bigoplus_{g \in G} R_{g}$ be a matrix algebra  over an algebraically closed field $F$, of characteristic zero, graded by a finite abelian group $G$. Then there exists a decomposition $n = kl$, a subgroup $H \subseteq G$ of order $k^{2}$ and a $l$-tuple $\overline{g} = (g_{1},\ldots,g_{l}) \in G^{l}$ such that $M_{n}(F)$ is isomorphic as a $G$-graded algebra to the tensor
	product $M_{k}(F) \otimes M_{l}(F)$ with an induced $G$-grading where $M_{k}(F)$ is a $H$-graded algebra with fine $H$-grading and $M_{l}(F)$ is endowed with an elementary grading determined by $\overline{g}$. Moreover, $H$ decomposes as $H\cong H_1\times \cdots \times H_t$, $H_i\cong \mathbb{Z}_{n_i}\times \mathbb{Z}_{n_i}$ and $M_k(F)$ is isomorphic to $M_{n_1}(F)\otimes\cdots \otimes M_{n_t}(F)$ as an $H$-graded algebra, where $M_{n_i}(F)$ is an $H_i$-graded algebra with some $\varepsilon_i$-grading.
\end{theorem}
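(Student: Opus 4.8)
The plan is to exploit the duality between $G$-gradings and $\hat G$-actions emphasized in the introduction. Since $F$ is algebraically closed of characteristic zero and $G$ is finite abelian, the dual group $\hat G=\mathrm{Hom}(G,F^{*})$ satisfies $\hat G\cong G$, and $G$-gradings on $R=M_n(F)$ correspond bijectively to actions of $\hat G$ on $R$ by $F$-algebra automorphisms, the component $R_g$ being the common eigenspace $\{x\in R:\chi\cdot x=\chi(g)\,x\ \text{for all }\chi\in\hat G\}$. Replacing $G$ by the subgroup generated by the (finite) support, I may assume the action is faithful. By the Skolem--Noether theorem every automorphism of $M_n(F)$ is inner, so the action is a homomorphism $\rho:\hat G\to PGL_n(F)=GL_n(F)/F^{*}I$ with abelian image. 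Choosing a lift $u_\chi\in GL_n(F)$ of each $\rho(\chi)$ gives $u_\chi u_\psi=\beta(\chi,\psi)\,u_{\chi\psi}$ for a $2$-cocycle $\beta:\hat G\times\hat G\to F^{*}$, and since $\hat G$ is abelian the commutators define an alternating bicharacter $\alpha(\chi,\psi)=\beta(\chi,\psi)\beta(\psi,\chi)^{-1}$, characterized by $u_\chi u_\psi=\alpha(\chi,\psi)\,u_\psi u_\chi$. A direct computation shows each $u_\chi$ is homogeneous of degree $g_\chi$, where $g_\chi\in G$ is determined by $\psi(g_\chi)=\alpha(\psi,\chi)$; the assignment $\chi\mapsto g_\chi$ is a homomorphism $\hat G\to G$ with kernel $K=\mathrm{Rad}(\alpha)$ and image a subgroup $H\le G$ with $H\cong\hat G/K$, so that $|H|=[\hat G:K]=:k^{2}$.

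Next I would construct the fine-graded simple factor. On $\hat G/K$ the bicharacter $\alpha$ descends to a nondegenerate alternating bicharacter $\bar\alpha$. By the Stone--von Neumann theory for the associated (projective, Heisenberg-type) representation, $\bar\alpha$ admits an essentially unique irreducible projective representation, of dimension $k=\sqrt{[\hat G:K]}$, and the subalgebra $B\subseteq R$ generated by the corresponding operators $u_\chi$ is a \emph{simple} subalgebra $B\cong M_k(F)$. Since the $u_\chi$ are homogeneous, $B$ is a graded subalgebra; its homogeneous components are spanned by the (invertible) images of the $u_\chi$, one for each element of $H\cong\hat G/K$, so $B$ carries a fine $H$-grading of order $k^{2}$, which the cited theorem on fine gradings recognizes as such. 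Regarding $V=F^{n}$ as a $B$-module forces $V\cong (F^{k})^{\oplus l}$, hence $n=kl$, and the double centralizer theorem for the central simple subalgebra $B$ yields $R\cong B\otimes C_R(B)$ with $C:=C_R(B)=\End_B(V)\cong M_l(F)$; being the centralizer of a graded subalgebra, $C$ is itself graded.

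I would then identify the grading on $C$ as elementary. The bicharacter attached to the $\hat G$-action on $B$ is exactly the pullback of $\bar\alpha$, hence equals $\alpha$ on all of $\hat G$; since bicharacters multiply across the tensor decomposition $R\cong B\otimes C$, the residual bicharacter governing the action on $C$ is trivial. A representation of the abelian group $\hat G$ with trivial commutator bicharacter lifts to an honest representation by commuting, simultaneously diagonalizable operators, so $C$ admits a homogeneous common eigenbasis; reading off the eigen-degrees produces an $l$-tuple $\overline g=(g_1,\dots,g_l)\in G^{l}$ realizing the grading on $C$ as the elementary grading determined by $\overline g$, and the grading on $R$ as the induced grading on $B\otimes C$. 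Finally, for the ``moreover'' part I would apply the hyperbolic (Witt) decomposition of the nondegenerate symplectic form: $(\hat G/K,\bar\alpha)$ splits into pairwise orthogonal hyperbolic summands, giving $\hat G/K\cong\prod_{i=1}^{t}(\mathbb Z_{n_i}\times\mathbb Z_{n_i})$ on which $\bar\alpha$ restricts to $((\bar a,\bar b),(\bar c,\bar d))\mapsto\varepsilon_i^{\,ad-bc}$ for a primitive $n_i$-th root of unity $\varepsilon_i$. Dually $H\cong H_1\times\cdots\times H_t$ with $H_i\cong\mathbb Z_{n_i}\times\mathbb Z_{n_i}$, the Heisenberg representation factors as a tensor product of $n_i$-dimensional blocks, and on each block the two canonical generators satisfy precisely the relations $X_aX_b=\varepsilon_i X_bX_a$ and $X_a^{\,n_i}=X_b^{\,n_i}=I$ of (\ref{XaXb}); hence that block is $M_{n_i}(F)$ with an $\varepsilon_i$-grading and $B\cong M_{n_1}(F)\otimes\cdots\otimes M_{n_t}(F)$ as $H$-graded algebras.

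The main obstacle is the second step: producing, from the nondegenerate bicharacter $\bar\alpha$, a genuine \emph{simple} graded subalgebra $B\cong M_k(F)$ of the predicted square dimension $k^{2}=[\hat G:K]$, and verifying that it is $\hat G$-stable so that the double centralizer theorem splits $R$ as the graded tensor product $B\otimes C_R(B)$. This requires the simultaneous control of the simplicity and dimension count coming from the symplectic form (the Stone--von Neumann uniqueness) and the compatibility of this algebraic splitting with the grading. Once $B$ and the tensor decomposition are in place, the triviality of the residual bicharacter on $C$ and the hyperbolic decomposition delivering the explicit $\varepsilon_i$-gradings are routine symplectic linear algebra and diagonalization.
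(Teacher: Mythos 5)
Your proposal cannot be checked against a proof in the paper, because the paper contains none: Theorem \ref{gradings} is quoted without proof from Theorems 5 and 6 of \cite{BahturinSegalZaicev}. Your strategy --- the duality between $G$-gradings and $\hat G$-actions, Skolem--Noether, the commutator bicharacter $\alpha$ of the projective representation $\chi\mapsto u_\chi$, its radical $K$, and the symplectic (hyperbolic) decomposition of $\hat G/K$ producing the $\varepsilon_i$-gradings --- is precisely the strategy of that source, and your first and last paragraphs are sound: each $u_\chi$ is indeed homogeneous of degree $g_\chi$, the map $\chi\mapsto g_\chi$ is a homomorphism with kernel $K$ and image $H\cong\hat G/K$, and the hyperbolic splitting of $\bar\alpha$ yields the blocks satisfying the relations of the matrices in (\ref{XaXb}).

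The gap is the step you yourself flag as ``the main obstacle,'' and the construction you offer for it fails: the subalgebra of $R$ generated by the operators $u_\chi$ is in general \emph{not} isomorphic to $M_k(F)$. Concretely, take an elementary grading by pairwise distinct $(g_1,\dots,g_n)$: then each $u_\chi$ is the diagonal matrix $\diag(\chi(g_1),\dots,\chi(g_n))$, $\alpha$ is trivial, $K=\hat G$, $k=1$, yet the $u_\chi$ generate the full $n$-dimensional diagonal algebra rather than $F\cdot I$. Restricting to a transversal of $K$ does not help: a product $u_{\chi}u_{\chi'}$ lies in $F^{*}u_{\kappa}u_{\chi''}$ with $\kappa\in K$, and $u_\kappa$ need not be scalar, so no selection among the $u_\chi$ spans a $k^2$-dimensional subalgebra. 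The missing idea is to deal with $K$ \emph{first}: since $\alpha$ is trivial on $K\times\hat G$ and $F$ is algebraically closed of characteristic zero, the cocycle $\beta|_{K}$ is a coboundary, so the $u_\kappa$ ($\kappa\in K$) can be rescaled to an honest commuting representation of $K$; decompose $V=\oplus_\mu V^{(\mu)}$ into $K$-isotypic components (each is preserved by every $u_\chi$, exactly because $K=\mathrm{Rad}(\alpha)$), apply the Stone--von Neumann uniqueness on each $V^{(\mu)}\cong F^k\otimes W_\mu$, and use Schur's lemma to factor $u_\chi=\tilde u_{\bar\chi}\otimes A_\chi$ on $V\cong F^k\otimes W$. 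Only with this factorization can you define $B$ as the span of the modified operators $\tilde u_{\bar\chi}\otimes 1$; then $B$ is visibly $\hat G$-stable (conjugation by $u_\psi$ multiplies $\tilde u_{\bar\chi}\otimes 1$ by $\bar\alpha(\bar\psi,\bar\chi)$), hence a graded subalgebra carrying a fine $H$-grading, the double centralizer theorem gives $R\cong B\otimes C_R(B)$ as graded algebras, and the rest of your argument --- commutativity of the $A_\chi$, hence a simultaneous homogeneous eigenbasis of $W$ and an elementary grading on $C_R(B)\cong M_l(F)$, plus the hyperbolic decomposition for the ``moreover'' part --- goes through as written.
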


\section{Gradings and Involutions on matrix algebras}

We now focus on $M_n(F)$ with both graded and involution structures.

In the papers \cite{BahturinShestakovZaicev,BahturinZaicev2} the authors describe what they call \emph{involution preserving gradings} (or \emph{graded involutions}) on the algebra $R=M_n(F)$ with involution $*$, (Theorem \ref{bahturin-Zaicev} below) i.e., gradings $R=\oplus_{g\in G} R_g$ on $R$, satisfying $(R_g)^{*}\subseteq R_g$, for all $g\in G$. Such  description has important applications on classifications of group gradings in some classes of simple finite dimensional Lie and Jordan algebras. We recall such description here.

\begin{lemma}\label{Lemma1}
	Let  $G$ be an abelian group and $R=M_n(F)$ be a $G$-graded matrix algebra with an elementary grading, defined by an $n$-tuple $(g_1,\dots,g_n)\in G^n$. Let $*$ be an involution on $R$, defined by a symmetric non-degenerate bilinear form. 
	If $*$ is a graded involution on $R$, then there exists integers $m$ and $l$ such that $n=2l+m$, and after a renumbering, $g_1g_{l+1}=g_2g_{l+2}=\cdots=g_lg_{2l}=g_{2l+1}^2=\cdots=g_n^2$. The involution $*$ acts as $X^*=\Phi^{-1}X^t\Phi$, where 
		\[\Phi=\begin{pmatrix}
		0   & I_l & 0\\
		I_l & 0   & 0\\
		0   & 0   & I_m\\
		\end{pmatrix}.\]
		and $I_s$ denotes the $s\times s$ identity matrix.
\end{lemma}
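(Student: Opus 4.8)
The plan is to convert the graded-involution hypothesis into a support condition on $\Phi$, and then to reach the normal form by a congruence that respects the grading. Concretely, I would first exploit the duality between the $G$-grading and the character group $\hat G$. For $\chi \in \hat G$ set $D_\chi = \diag(\chi(g_1),\dots,\chi(g_n))$ and let $\rho_\chi(X) = D_\chi^{-1} X D_\chi$; since $\rho_\chi(e_{ij}) = \chi(g_i^{-1}g_j)e_{ij} = \chi(\deg e_{ij})e_{ij}$, the homogeneous components $R_g$ are exactly the common eigenspaces of the commuting family $\{\rho_\chi\}_{\chi\in\hat G}$. Because $*$ is $F$-linear, the condition $(R_g)^*\subseteq R_g$ for all $g$ is equivalent to $(\rho_\chi X)^* = \rho_\chi(X^*)$ for all $X$ and all $\chi$. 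Substituting $X^*=\Phi^{-1}X^t\Phi$ and using $D_\chi^t=D_\chi$, this reads $\Phi^{-1}D_\chi Y D_\chi^{-1}\Phi = D_\chi^{-1}\Phi^{-1}Y\Phi D_\chi$ for every $Y$, i.e. $\Phi D_\chi\Phi^{-1}D_\chi$ commutes with all $Y$ and is therefore scalar: $\Phi D_\chi \Phi^{-1} = \mu_\chi D_\chi^{-1}$ for some $\mu_\chi\in F^{*}$.

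Comparing the $(i,j)$ entries of $\Phi D_\chi = \mu_\chi D_\chi^{-1}\Phi$ gives $\phi_{ij}\chi(g_j) = \mu_\chi\chi(g_i)^{-1}\phi_{ij}$, so $\phi_{ij}\neq 0$ forces $\chi(g_ig_j) = \mu_\chi$ for every $\chi$. As $\hat G$ separates the points of $G$, there is a single $s\in G$ with $g_ig_j = s$ for all $(i,j)$ in the support of $\Phi$; thus $\Phi$ is \emph{homogeneous}. Grouping the indices by the value of $g_i$, this means the block of $\Phi$ coupling the indices with $g_i=a$ to those with $g_j=c$ vanishes unless $c=sa^{-1}$. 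Nonsingularity of $\Phi$ then forces, for each occurring label $a$, the label $sa^{-1}$ to occur as well with the same multiplicity and the corresponding coupling block to be square and invertible, while symmetry of $\Phi$ makes the $(sa^{-1},a)$ block the transpose of the $(a,sa^{-1})$ one.

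It remains to normalise $\Phi$ without disturbing the grading. A congruence $\Phi\mapsto P^{t}\Phi P$ with $P$ block-diagonal along this partition corresponds to conjugation by a matrix commuting with every $D_\chi$, hence to a graded automorphism; it keeps the grading elementary with the same multiset of degrees. For a label with $a^2\neq s$ I pair it with $sa^{-1}$ and choose dual bases so that the coupling block becomes $I$, turning that doubled piece into $\left(\begin{smallmatrix}0&I\\ I&0\end{smallmatrix}\right)$; for a label with $a^2=s$ the form restricts to a nonsingular symmetric form on the corresponding coordinates, which over the algebraically closed field $F$ is congruent to $I$. Placing one representative of each such pair first, its partner next, and all self-paired labels last, and renumbering accordingly (a further graded permutation congruence), brings $\Phi$ to the displayed shape with $l$ the number of hyperbolic pairs and $m$ the number of self-paired indices, so $n=2l+m$. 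Reading $g_ig_j=s$ off each surviving block gives $g_ig_{l+i}=s$ for the pairs and $g_{2l+k}^2=s$ for the self-paired indices, i.e. the asserted chain of equalities.

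I expect the main obstacle to be the first step: extracting the support condition $g_ig_j=\mathrm{const}$ from gradedness (the duality computation above is the efficient way to do it), together with checking that every congruence used afterwards is block-diagonal along the partition, so that the classical reduction of the symmetric form to hyperbolic-plus-identity blocks is carried out inside the category of graded objects and the elementary grading survives. Once homogeneity of $\Phi$ is in hand, the rest is the standard classification of symmetric forms over an algebraically closed field.
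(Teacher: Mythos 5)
Your argument is correct, but a preliminary remark is in order: the paper never proves Lemma \ref{Lemma1} at all --- it is recalled, without proof, from \cite{BahturinShestakovZaicev} --- so the comparison must be with the techniques the paper uses for its analogous results. Your route (translate $(R_g)^*\subseteq R_g$ into commutation of $*$ with the $\hat G$-action $X\mapsto D_\chi^{-1}XD_\chi$; extract the twisted relation $\Phi D_\chi\Phi^{-1}=\mu_\chi D_\chi^{-1}$, hence that $g_ig_j$ is constant on the support of $\Phi$; then normalize by congruences $\Phi\mapsto P^t\Phi P$ with $P$ commuting with every $D_\chi$) is precisely the $\hat G$-duality method the paper deploys in Section 5 for $UT_n$ (Propositions \ref{lambda} and \ref{homogeneous}), whereas for the full matrix algebra the paper's own analogue, Proposition \ref{elementary}, instead invokes the graded Skolem--Noether theorem (Corollary \ref{Skolem}). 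Your choice is in fact the more appropriate one for this lemma: when the involution \emph{preserves} degrees, the map $Y\mapsto\Phi^{-1}Y\Phi$ \emph{inverts} degrees rather than preserving them, so Corollary \ref{Skolem} does not apply verbatim, and correspondingly $\Phi$ need not be homogeneous in the elementary grading; your support condition $g_ig_j=s$ on $\Supp(\Phi)$ is the correct substitute for homogeneity (which would read $g_i^{-1}g_j=s$), and the word ``homogeneous'' in your second paragraph should be understood only in that modified sense. Two points deserve an explicit word: separation of points by $\hat G$, which you assert without proof, does hold here because $F$ is algebraically closed of characteristic zero, so $F^{*}$ is divisible and contains all roots of unity (or simply replace $G$ by the finitely generated subgroup $\langle g_1,\dots,g_n\rangle$); and both this and your reduction of the self-paired diagonal blocks to $I$ rely on the paper's standing field hypotheses rather than on anything in the lemma's own wording, so they should be cited as such. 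With those clauses added, your proof is a complete, self-contained alternative to citing \cite{BahturinShestakovZaicev}, and it has the additional merit of making the degree-preserving and degree-inverting cases differ only in the sign of the exponent in $\Phi D_\chi\Phi^{-1}=\mu_\chi D_\chi^{\pm 1}$.
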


\begin{lemma}\label{Lemma2}
	Let  $G$ be an abelian group, $n=2l$ be an even positive integer and $R=M_n(F)$ be a $G$-graded matrix algebra with an elementary grading, defined by an $n$-tuple $(g_1,\dots,g_n)\in G^n$. Let $*$ be an involution on $R$, defined by a skew-symmetric non-degenerate bilinear form. 
	If $*$ is a graded involution on $R$ then, after a renumbering, $g_1g_{l+1}=g_2g_{l+2}=\cdots=g_lg_{2l}$. The involution $*$ acts as $X^*=\Phi^{-1}X^t \Phi$, where
		\[\Phi=\begin{pmatrix}
		0   & I_l \\
		-I_l & 0  \\
		\end{pmatrix},\]
		and $I_l$ denotes the $l\times l$ identity matrix.
\end{lemma}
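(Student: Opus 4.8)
The plan is to transfer everything to the basis in which the elementary grading is realized and then read off the constraints that the word ``graded'' imposes on the Gram matrix $\Phi$. Realize the elementary grading as $\End V$ with $V=\bigoplus_g V_g$ and homogeneous basis $v_1,\dots,v_n$, $\deg v_i=g_i^{-1}$, so that each $e_{ij}$ is homogeneous of degree $g_i^{-1}g_j$ and $b(v_i,v_j)=\Phi_{ij}$. The first and decisive observation is that $*$ factors as $*=c_{\Phi^{-1}}\circ\tau$, where $\tau$ is the ordinary transpose $X\mapsto X^t$ and $c_{\Phi^{-1}}(Y)=\Phi^{-1}Y\Phi$. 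Since $\tau$ interchanges $e_{ij}$ and $e_{ji}$ it is \emph{degree-inverting}, $\tau(R_g)=R_{g^{-1}}$. As $*$ is assumed to be graded, $(R_g)^*=R_g$, the algebra automorphism $c_{\Phi^{-1}}=*\circ\tau$ is therefore degree-inverting as well; that is, $\Phi^{-1}e_{ij}\Phi\in R_{g_ig_j^{-1}}$ for all $i,j$.

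Next I would extract from this a support condition on $\Phi$. Writing $\Psi=\Phi^{-1}$ and expanding, $\Phi^{-1}e_{ij}\Phi=\sum_{a,b}\Psi_{ai}\Phi_{jb}\,e_{ab}$. Membership in $R_{g_ig_j^{-1}}$ forces $g_a^{-1}g_b=g_ig_j^{-1}$, i.e.\ $g_ag_i=g_bg_j$, for every pair with $\Psi_{ai}\neq0$ and $\Phi_{jb}\neq0$. Fixing one index pair $(a_0,i_0)$ with $\Psi_{a_0i_0}\neq0$, which exists because $\Phi$ (hence $\Psi$) is nonsingular, we conclude that $g_jg_b=g_{a_0}g_{i_0}=:s$ is one and the same element of $G$ for every $(j,b)$ in the support of $\Phi$. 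Thus there is a single $s\in G$ with
\[\Phi_{ij}\neq0\ \Longrightarrow\ g_ig_j=s,\]
so $\Phi$ is supported only on index pairs of constant $g$-product $s$.

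It then remains to normalize $\Phi$ by a grading-preserving change of basis. Any change of basis that mixes only basis vectors of equal degree keeps each $v_i$ homogeneous, hence preserves the elementary grading on $\End V$, while acting on $\Phi$ by congruence $\Phi\mapsto S^t\Phi S$. Grouping the indices into classes $C_x=\{i:g_i=x\}$, the support condition says that $\Phi$ pairs $C_x$ only with $C_{sx^{-1}}$. Skew-symmetry together with non-degeneracy forces $|C_x|=|C_{sx^{-1}}|$ when $x^2\neq s$ and $|C_x|$ even when $x^2=s$, and in both cases a congruence performed inside single classes reduces the relevant block of $\Phi$ to a direct sum of hyperbolic planes $\begin{pmatrix}0&1\\-1&0\end{pmatrix}$, each joining an index $i$ to an index $j$ with $g_ig_j=s$ (for $x^2\neq s$ the partner lies in $C_{sx^{-1}}$; for $x^2=s$ both indices lie in $C_x$ and $g_ig_j=x^2=s$). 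Renumbering so that the ``first'' member of each plane is $1,\dots,l$ and its partner is $l+1,\dots,2l$, the matrix $\Phi$ becomes $\begin{pmatrix}0&I_l\\-I_l&0\end{pmatrix}$ and $g_1g_{l+1}=\cdots=g_lg_{2l}=s$. Since a nonsingular skew form has even rank, $n=2l$ is automatic.

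The step I expect to be the main obstacle is the normalization of the previous paragraph: one must verify that the symplectic reduction can be carried out using only equal-degree congruences and, above all, that the self-paired classes ($x^2=s$) and the genuinely paired ones ($x^2\neq s$) can be treated uniformly, so that every hyperbolic plane still satisfies $g_ig_j=s$. It is precisely the uniformity of $s$ from the second paragraph that lets the individual blocks assemble into one global symplectic form rather than several unrelated skew blocks; the remaining care is bookkeeping, to ensure the final renumbering produces exactly the sign pattern $\begin{pmatrix}0&I_l\\-I_l&0\end{pmatrix}$ and not merely a congruent skew matrix.
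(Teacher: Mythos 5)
Your proof is correct. Note, however, that this paper never proves Lemma \ref{Lemma2}: Section 3 only recalls it from \cite{BahturinShestakovZaicev,BahturinZaicev2}, so the only in-paper argument to compare with is the proof of the analogous statement for degree-inverting involutions, Proposition \ref{elementary}. Your overall strategy matches that proof --- realize the grading on $\End V$ with a homogeneous basis, deduce a homogeneity-type constraint on the Gram matrix $\Phi$, normalize $\Phi$ by congruences mixing only equal-degree basis vectors, and read the relations among the $g_i$ off the action on matrix units --- but your key step is genuinely different, and necessarily so. In Proposition \ref{elementary} the map $X\mapsto\Phi^{-1}X\Phi$ is a \emph{graded automorphism}, and the paper invokes the graded Skolem--Noether theorem (Corollary \ref{Skolem}) to conclude that $\Phi$ is homogeneous. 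In your setting the involution is degree-preserving while the transpose is degree-inverting, so the conjugation $c_{\Phi^{-1}}=*\circ\tau$ is degree-inverting rather than degree-preserving, and Corollary \ref{Skolem} does not apply as stated. Your replacement --- expanding $\Phi^{-1}e_{ij}\Phi$ in matrix units and fixing a single nonzero entry $\Psi_{a_0i_0}$ of $\Phi^{-1}$ to force $g_ig_j=s$ on the support of $\Phi$ --- is an elementary, self-contained, and correct substitute, and it produces exactly the right analogue of homogeneity here (constant \emph{product}, rather than constant quotient, of the degrees attached to the index pair). The remainder of the argument (the non-degenerate pairing of the classes $C_x$ and $C_{sx^{-1}}$, the parity and dimension counts, the symplectic reduction by equal-degree congruences in both the self-paired case $x^2=s$ and the paired case $x^2\neq s$, and the final renumbering into hyperbolic planes with common product $s$) is sound and yields precisely the stated form of $\Phi$ and the relations $g_1g_{l+1}=\cdots=g_lg_{2l}$.
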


\begin{lemma}\label{Lemma3}
	Let $R=M_2(F)$ be a $2\times 2$ matrix algebra endowed with an involution $*:R\longrightarrow R$ corresponding to a symmetric or skew symmetric non-degenerate bilinear form with matrix $\Phi$ and with the $(-1)$-grading of $M_2(F)$ by the group $G=\langle a \rangle \times \langle b \rangle$. Then $*$ is a graded-involution if and only if $\Phi$ can be chosen to be one of the matrices $I$, $X_a$, $X_b$ or $X_aX_b$.
\end{lemma}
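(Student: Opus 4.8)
The plan is to exploit that this particular grading has four one-dimensional homogeneous components, so that the graded-involution condition collapses to a handful of sign conditions which are easy to convert into constraints on $\Phi$. First I would record the components. For $n=2$ one has $\varepsilon=-1$, so $X_a=\diag(-1,1)$ and $X_b=\left(\begin{smallmatrix}0&1\\1&0\end{smallmatrix}\right)$, and the four graded pieces are $R_e=\operatorname{span}\{I\}$, $R_a=\operatorname{span}\{X_a\}$, $R_b=\operatorname{span}\{X_b\}$, $R_{ab}=\operatorname{span}\{X_aX_b\}$. Since each piece is one-dimensional and $*$ is linear, $*$ is a graded involution exactly when $X_a^*=\lambda_aX_a$, $X_b^*=\lambda_bX_b$ and $(X_aX_b)^*=\lambda_{ab}X_aX_b$ for scalars $\lambda_a,\lambda_b,\lambda_{ab}$ (the identity is automatically fixed). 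Applying $*$ twice forces every $\lambda\in\{\pm1\}$, and the antiautomorphism property together with $X_aX_b=-X_bX_a$ gives $\lambda_{ab}=-\lambda_a\lambda_b$; hence it suffices to run over the four pairs $(\lambda_a,\lambda_b)\in\{\pm1\}^2$.

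Next I would pass from these conditions to $\Phi$. Writing $X^*=\Phi^{-1}X^t\Phi$ and using that $X_a$ and $X_b$ are both symmetric, the conditions become the commutation relations $X_a\Phi=\lambda_a\Phi X_a$ and $X_b\Phi=\lambda_b\Phi X_b$. Setting $\Phi=\left(\begin{smallmatrix}p&q\\r&s\end{smallmatrix}\right)$, the relation with $X_a$ forces $\Phi$ diagonal when $\lambda_a=1$ and anti-diagonal when $\lambda_a=-1$, while the relation with $X_b$ forces $p=s,\ q=r$ when $\lambda_b=1$ and $p=-s,\ q=-r$ when $\lambda_b=-1$. Intersecting each sign pair with the standing hypothesis that $\Phi$ be symmetric or skew-symmetric and non-degenerate then pins everything down: for $(\lambda_a,\lambda_b)=(1,1),(1,-1),(-1,1)$ only a symmetric $\Phi$ survives, proportional to $I$, $X_a$, $X_b$ respectively, whereas $(\lambda_a,\lambda_b)=(-1,-1)$ admits only a skew-symmetric $\Phi$, proportional to $X_aX_b$; in each case the opposite parity would force $\Phi$ degenerate. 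The converse direction is immediate, since each of the four matrices visibly satisfies the two commutation relations and hence defines a graded involution.

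I expect the only genuine obstacle to lie in the bookkeeping of this last intersection: one must verify that for each of the four sign pairs exactly one of the symmetric/skew-symmetric alternatives is compatible with $\det\Phi\neq0$. That dichotomy is precisely what singles out the four matrices $I,X_a,X_b,X_aX_b$ — up to the scalar freedom already permitted by the correspondence of the earlier Corollary — rather than a larger family, and it is where the hypothesis on the parity of the form does its essential work.
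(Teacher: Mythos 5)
Your proof is correct, but note that the paper never proves Lemma~\ref{Lemma3} at all: it is recalled verbatim from \cite{BahturinShestakovZaicev}, so there is no internal proof to compare against, and your argument serves as a legitimate self-contained verification. The computation checks out: since the four homogeneous components are one-dimensional, gradedness of $*$ reduces to $X_a^*=\lambda_aX_a$ and $X_b^*=\lambda_bX_b$ with $\lambda_a,\lambda_b\in\{\pm1\}$ (the condition on $X_aX_b$ being automatic from the antiautomorphism property), and because $X_a$ and $X_b$ are symmetric these become the commutation relations $X_a\Phi=\lambda_a\Phi X_a$ and $X_b\Phi=\lambda_b\Phi X_b$, whose joint solution space in each of the four sign cases is exactly one-dimensional, spanned by $I$, $X_a$, $X_b$, $X_aX_b$ respectively. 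One correction of emphasis: the symmetric/skew-symmetric hypothesis does no ``essential work'' in pinning down $\Phi$, contrary to your closing paragraph --- the two commutation relations alone already force $\Phi$ to be a scalar multiple of one of the four matrices, and the parity then comes out automatically (symmetric in the first three cases, skew-symmetric in the fourth); the hypothesis is only needed a priori so that $X\mapsto\Phi^{-1}X^t\Phi$ is an involution rather than a mere antiautomorphism. It is also worth observing that your direct $2\times2$ computation is simpler than the technique the paper uses in Section~4 for the analogous degree-inverting statement on $\varepsilon$-gradings (an eigenspace analysis of conjugation by $X_b$ relative to the decomposition $R=P\oplus X_aP\oplus\cdots\oplus X_a^{n-1}P$), which is the machinery required for general $n$; for $n=2$, where graded and degree-inverting involutions coincide because every element of $\mathbb{Z}_2\times\mathbb{Z}_2$ is its own inverse (the paper's own Remark preceding Corollary~\ref{Lemma3'}), your elementary bookkeeping suffices.
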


Now the description of the graded involutions on $M_n(F)$.

\begin{theorem}\label{bahturin-Zaicev}
	Let $R=M_n(F)=\oplus_{g\in G}R_g$ be a matrix algebra over an algebraically closed field of characteristic zero graded by the group $G$ and $\Supp(R)$ generates $G$. Suppose that $*:R\longrightarrow R$ is a graded involution on $R$. Then $G$ is abelian and $R$ is isomorphic as a $G$-graded algebra to the tensor product $R^{(0)}\otimes R^{(1)}\otimes \cdots \otimes R^{(k)}$ of a matrix subalgebra $R^{(0)}$ with elementary grading  and $R^{(1)}\otimes \cdots \otimes R^{(k)}$ a matrix subalgebra with fine grading. Suppose further that both these subalgebras are invariant under the involution $*$. Then $n=2^km$ and
	\begin{enumerate}[(1)]
		\item $R^{(0)}=M_m(F)$ with a elementary grading defined by $\overline{g}=(g_1,\dots,g_m)$, as in Lemma $\ref{Lemma1}$, if $*$ is symmetric or as in Lemma \ref{Lemma2} if $*$ is skew-symmetric.
		\item  $R^{(1)}\otimes \cdots \otimes R^{(k)}$ is a $T=T_1\times \cdots \times T_k$-graded algebra and any $R^{(i)}\cong M_2(F)$ is $T_i\cong \mathbb{Z}_2\times \mathbb{Z}_2$-graded algebra as in Lemma \ref{Lemma3}.
	\end{enumerate}
\end{theorem}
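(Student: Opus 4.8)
The plan is to assemble the statement from the structure theorem for abelian gradings (Theorem \ref{gradings}), the tensor decomposition of involutions recalled in Section 1, and Lemmas \ref{Lemma1}--\ref{Lemma3}, treating the elementary tensor factor and the fine tensor factors separately. First I would reduce to the abelian setting: since $\Supp(R)$ generates $G$ and $R=M_n(F)$ is a matrix algebra over an algebraically closed field, $G$ is abelian (the known fact that the grading group of a matrix algebra whose support generates it is abelian), so Theorem \ref{gradings} applies. Invoking it, I obtain a $G$-graded isomorphism $R\cong R^{(0)}\otimes R^{(1)}\otimes\cdots\otimes R^{(k)}$, where $R^{(0)}=M_m(F)$ carries an elementary grading determined by a tuple $\overline g=(g_1,\dots,g_m)$, each $R^{(i)}\cong M_{n_i}(F)$ for $i\ge 1$ carries an $\varepsilon_i$-grading by a subgroup $T_i\cong\mathbb Z_{n_i}\times\mathbb Z_{n_i}$, and $T=T_1\times\cdots\times T_k$ supports the fine part; at this stage $n=m\,n_1\cdots n_k$.

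Next I would transfer the involution to this decomposition. By hypothesis both $R^{(0)}$ and $R^{(1)}\otimes\cdots\otimes R^{(k)}$ are $*$-invariant, so the tensor-decomposition lemma for involutions from Section 1 lets me write $*=*_0\otimes*_1\otimes\cdots\otimes*_k$, where each $*_i$ is an involution on $R^{(i)}$ corresponding to a non-degenerate symmetric or skew-symmetric bilinear form $\varphi_i$, and $*$ corresponds to $\varphi_0\otimes\cdots\otimes\varphi_k$. Because $*$ is graded and the factors are $*$-invariant, each restriction $*_i$ is itself a graded involution on the respective graded factor. For $*_0$ on the elementary factor $R^{(0)}$ I would apply Lemma \ref{Lemma1} when $\varphi_0$ is symmetric and Lemma \ref{Lemma2} when it is skew-symmetric, obtaining the normal form of $\Phi$ and the stated relations among $g_1,\dots,g_m$; this yields conclusion $(1)$ with $R^{(0)}=M_m(F)$.

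The crux is the fine factors: I must show every $n_i$ equals $2$, so that each $R^{(i)}\cong M_2(F)$ falls under Lemma \ref{Lemma3} and $n=2^k m$. Working inside a single $\varepsilon$-graded $M_{n_i}(F)$ with basis $\{X_a^{p}X_b^{q}\}$ spanning the one-dimensional components $R_{(p,q)}$, a direct computation with the transpose, using $X_a^t=X_a$, $X_b^t=X_b^{-1}$ and the relation $X_aX_b=\varepsilon X_bX_a$, shows that transposition sends $R_{(p,q)}$ into $R_{(p,-q)}$. Since $G$ is abelian, conjugation by a homogeneous element preserves degrees, so the graded involution $*_i=\Phi_i^{-1}(\cdot)^t\Phi_i$ maps $R_{(p,q)}$ into $R_{(p,-q)}$; being graded it must fix each one-dimensional component, forcing $(p,-q)=(p,q)$, i.e.\ $2q=0$ in $\mathbb Z_{n_i}$ for every $q$. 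Hence $n_i\mid 2$, and since $R^{(i)}$ is a nontrivial fine factor, $n_i=2$. Then Lemma \ref{Lemma3} identifies the admissible $\Phi_i\in\{I,X_a,X_b,X_aX_b\}$, giving conclusion $(2)$.

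I expect the main obstacle to be this last step, specifically justifying that the bilinear form realizing the graded involution $*_i$ on a fine factor may be taken homogeneous (equivalently, that $*_i$ acts diagonally on the one-dimensional components), since only then does the clean ``$2q=0$'' obstruction apply. Handling a possibly non-homogeneous $\Phi_i$ requires examining how $\mathrm{Int}(\Phi_i)$ permutes the homogeneous components and ruling out degree-mixing, which is precisely where the $\varepsilon$-commutation relations together with the anti-multiplicativity and the order-two condition on $*_i$ must be used carefully.
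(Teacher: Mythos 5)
A preliminary remark: the paper itself gives no proof of Theorem \ref{bahturin-Zaicev} --- it is quoted from \cite{BahturinShestakovZaicev,BahturinZaicev2} --- so your attempt can only be measured against those sources and against the analogous arguments the paper runs in Section 4. Your overall architecture (abelianness of $G$, then Theorem \ref{gradings}, then the Section 1 tensor decomposition of $*$, then Lemmas \ref{Lemma1}--\ref{Lemma3} factor by factor) is the right one, but two steps are genuinely broken. The first is abelianness: the ``known fact'' you invoke is false as stated. For example, $M_3(F)$ carries the elementary $S_3$-grading defined by the tuple $(e,(12),(13))$, whose support generates $S_3$. Abelianness really comes from the involution: since $*$ preserves degrees, comparing degrees in $(xy)^*=y^*x^*$ for homogeneous $x\in R_g$, $y\in R_h$ with $xy\neq 0$ forces $gh=hg$, and simplicity of $M_n(F)$ supplies enough nonzero products to make all support elements commute. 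This is part of what the quoted theorem proves, not a fact available before it.

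The second, decisive gap is the one you flag yourself: showing each $n_i=2$. Your route via ``$\Phi_i$ may be taken homogeneous'' is circular, not merely incomplete. Indeed $\mathrm{Int}(\Phi_i)=*_i\circ t$ carries $R_{(p,q)}$ onto $R_{(p,-q)}$ (because $*_i$ preserves degrees and $t$ sends $(p,q)$ to $(p,-q)$), so if $\Phi_i$ were homogeneous --- making $\mathrm{Int}(\Phi_i)$ degree-preserving, $G$ being abelian --- one would conclude $(p,q)=(p,-q)$ for every component, i.e.\ $n_i\le 2$ on the spot: homogeneity of $\Phi_i$ is \emph{equivalent} to the conclusion, and for $n_i>2$ it is unprovable because no graded involution exists there at all. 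A related defect is your factorwise application of Lemma 1.3, which requires each individual $R^{(i)}$ to be $*$-invariant, whereas the hypothesis only makes the whole fine part invariant. Both problems are repaired by one short computation on the whole fine part that needs neither homogeneity nor factorwise invariance: letting $\Phi$ be the matrix of the form of $*$ on $R^{(1)}\otimes\cdots\otimes R^{(k)}$, the map $\psi=\mathrm{Int}(\Phi)=*\circ t$ is an algebra automorphism sending each homogeneous component $R_h$ onto $R_{\sigma(h)}$, where $\sigma$ negates the ``$b$-coordinates''; hence $\psi(X_{a_i})=\alpha_i X_{a_i}$ and $\psi(X_{b_i})=\beta_i X_{b_i}^{-1}$, and applying $\psi$ to the relation $X_{a_i}X_{b_i}=\varepsilon_i X_{b_i}X_{a_i}$ yields $X_{a_i}X_{b_i}^{-1}=\varepsilon_i X_{b_i}^{-1}X_{a_i}$, while the relation itself gives $X_{a_i}X_{b_i}^{-1}=\varepsilon_i^{-1} X_{b_i}^{-1}X_{a_i}$; therefore $\varepsilon_i^2=1$ and $n_i=2$. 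This is exactly the style of argument the paper uses in Section 4 for $\varepsilon$-gradings (the lemma preceding Corollary \ref{corolario}), and the original Bahturin--Zaicev proof packages it as the statement that the commutation bicharacter of the fine part takes values $\pm 1$, forcing the support to be an elementary abelian $2$-group.
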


\section{Degree-inverting involutions}

An involution $*$ on a $G$-graded algebra $A=\oplus_{g\in G}A_g$ is called a \emph{degree-inverting involution} if it satisfies for all $g\in G$, $(A_g)^*\subseteq A_{g^{-1}}$. One can easily verify that the above condition is equivalent to $(A_g)^*=A_{g^{-1}}$, for all $g\in G$.

This section is devoted to describing analogous results to those in section 3, in the case of degree-inverting involutions.

The next lemma shows that the transpose involution in $M_n(F)$ is a degree-inverting involution, when it is endowed with an elementary grading.

\begin{lemma}
	Let $R=M_n(F)$, $n\geq 2$ and let $t:R\longrightarrow R$ be the transpose involution on $R$. If $G$ is a group and $R$ has an elementary $G$-grading, then for all $g\in G$, $(A_g)^t\subseteq A_{g^{-1}}$.
\end{lemma}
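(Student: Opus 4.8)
The plan is to argue directly from the definition of the elementary grading together with the explicit action of transpose on elementary matrices. Recall from Section 2 that the elementary $G$-grading defined by $(g_1,\dots,g_n)\in G^n$ places each elementary matrix $e_{ij}$ in the homogeneous component of degree $g_i^{-1}g_j$; equivalently, $R_g$ is spanned over $F$ by those $e_{ij}$ for which $g_i^{-1}g_j=g$. Since the statement asserts an inclusion of a homogeneous component under an $F$-linear map, it suffices to track what happens to these spanning elementary matrices.

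First I would record the trivial observation that the transpose sends each elementary matrix $e_{ij}$ to $e_{ji}$. Next I would compute the degree of the image: if $e_{ij}\in R_g$, so that $g=g_i^{-1}g_j$, then its transpose $e_{ji}$ has degree $g_j^{-1}g_i=(g_i^{-1}g_j)^{-1}=g^{-1}$. Hence transpose carries every homogeneous elementary matrix of degree $g$ to a homogeneous elementary matrix of degree $g^{-1}$.

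Finally, because $R_g$ is the $F$-span of the elementary matrices $e_{ij}$ with $g_i^{-1}g_j=g$, and because the transpose map $t$ is $F$-linear, the inclusion $(R_g)^t\subseteq R_{g^{-1}}$ follows immediately for every $g\in G$. (Here I read the $A_g$ of the statement as the homogeneous components $R_g$ of the elementary grading on $R$.)

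I do not expect any genuine obstacle: the argument is a one-line bookkeeping computation on degrees. The only point deserving a word of care is the passage from $g_i^{-1}g_j$ to its inverse $g_j^{-1}g_i$, which uses nothing beyond the group axiom $(xy)^{-1}=y^{-1}x^{-1}$; in particular no abelianness of $G$ is required, so the lemma holds for an arbitrary group, exactly as stated.
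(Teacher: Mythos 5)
Your proposal is correct and follows exactly the same route as the paper's proof: reduce to elementary matrices by $F$-linearity of the grading components and of transpose, note $e_{ij}^t=e_{ji}$, and compute $\deg(e_{ji})=g_j^{-1}g_i=g^{-1}$. Your additional remark that no commutativity of $G$ is needed is accurate and consistent with the lemma being stated for an arbitrary group.
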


\proof Suppose such grading is defined by an $n$-tuple $(g_1,\dots,g_n)$ of elements of $G$. And write $R=\oplus_{g\in G} R_g$. Of course it is enough to prove the lemma for elementary matrices.
Let $e_{ij}\in R_g$. Then $g=g_i^{-1}g_j$.
Clearly $e_{ij}^t=e_{ji}$ and $deg(e_{ji})=g_{j}^{-1}g_i=g^{-1}$. \endproof

Now a natural problem arises: ``Describe the degree-inverting involutions on $M_n(F)$''.  Such description is the main purpose of this paper. The results obtained here are similar to those obtained in \cite{BahturinShestakovZaicev} and \cite{BahturinZaicev2} for the case of graded-involutions.

The next result handles the case of elementary gradings on $M_n(F)$ and an involution $*$ on $M_n(F)$ satisfying $(R_g)^*\subseteq R_{g^{-1}}$.

In order to handle the case of elementary gradings, we are going to need a graded version of the well-known Skolem-Noether theorem. From the classical Skolem-Noether theorem, we obtain that any automorphism of $M_n(F)$ is inner, i.e., given by conjugation by an invertible matrix.

The graded version of such theorem was proved by Hwang and \linebreak Wadsworth in \cite{hw} and we recall it here. For more information concerning graded structures, simple and semisimple graded algebras, we refer to \cite{tw}. In what follows, we denote by $C_A(B)$ the centralizer of $B$ in $A$ and by $Z(A)$ the center of the algebra $A$.

\begin{theorem}
	Let $F$ be a graded field and $A$ be a central simple graded $F$-algebra. Let $B$ be a graded simple $F$-subalgebras of $A$, $C=C_A(B)$ and $Z=Z(B)$. Let $\alpha: B\longrightarrow A$ be a graded $F$-algebra homomorphism. Then,
	\begin{enumerate}
		\item There exists an invertible element $a\in A$ such that $\alpha(b)=a^{-1}ba$, for all $b \in B$.
		\item If $C$ is a division ring, then the element $a$ of part (1) can be chosen to be homogeneous in $A$.
		\item The element $a$ of the above item can be chosen to be homogeneous if and only if there is a graded homomorphism $\gamma:C\longrightarrow A$ such that $\gamma_{|Z}=\alpha_{|Z}$ and $\gamma(B)$ centralizes $\alpha(B)$ in $A$.
	\end{enumerate}
\end{theorem}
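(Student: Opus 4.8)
The plan is to follow the classical proof of the Skolem--Noether theorem transplanted into the graded category, reducing everything to the comparison of two module structures and then reading off the properties of the conjugating element from the properties of the comparison isomorphism. First I would form the graded tensor product $E = A \otimes_F B^{\mathrm{op}}$ and endow the underlying space of $A$ with two structures of graded left $E$-module: the module ${}_\iota A$, on which $(x \otimes b^{\mathrm{op}})$ acts by $y \mapsto x y b$ through the inclusion $B \hookrightarrow A$, and the module ${}_\alpha A$, on which it acts by $y \mapsto x y\, \alpha(b)$. A direct computation shows that the $E$-linear maps ${}_\iota A \to {}_\alpha A$ are exactly the right multiplications $y \mapsto ya$ by those $a \in A$ satisfying $ba = a\,\alpha(b)$ for all $b$ --- equivalently $\alpha(b) = a^{-1}ba$ once $a$ is invertible --- that such a map is bijective precisely when $a$ is invertible, and that it is homogeneous of a given degree precisely when $a$ is homogeneous of that degree. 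Thus part (1) amounts to producing an $E$-module isomorphism, while parts (2) and (3) amount to producing a \emph{graded} one.

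The second step is to pin down the structure of $E$. Using that $A$ is graded central simple over the graded field $F$ and that $B$ is graded simple with graded center $Z$, I would invoke the graded Wedderburn--Artin theory (from the references cited for graded structures) to conclude that $E$ is graded semisimple, its graded simple components being governed by the graded field extension $Z/F$ through $Z \otimes_F Z$. Each such component is a graded matrix algebra over a graded division algebra, and over such an algebra the finitely generated graded modules are classified up to isomorphism and degree shift by their simple multiplicities. This classification is the engine driving all three parts.

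For part (1) I would compare ${}_\iota A$ and ${}_\alpha A$ as ungraded $E$-modules: they share the same underlying space, and since $\alpha$ is injective on the simple algebra $B$ and is compatible with the central action of $Z$, the two modules carry the same multiplicities on each simple component of $E$; hence they are isomorphic as ungraded modules, yielding the invertible (but possibly inhomogeneous) element $a$. For part (2), when $C = C_A(B)$ is a graded division ring, the identification $\End_E({}_\iota A) \cong C$ exhibits $\mathrm{Hom}_E({}_\iota A,{}_\alpha A)$ as a one-sided graded space over $C$; decomposing any isomorphism into its homogeneous components and using that every nonzero homogeneous element of a graded division ring is invertible, I would extract a single homogeneous component that is already an isomorphism, giving a homogeneous $a$. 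For part (3), the homogeneous $a$ exists exactly when ${}_\iota A$ and ${}_\alpha A$ are isomorphic as \emph{graded} $E$-modules, that is, with matching degree shifts; I would show that this shift-matching is equivalent to the existence of the graded homomorphism $\gamma : C \to A$ with $\gamma|_Z = \alpha|_Z$ and $\gamma(B)$ centralizing $\alpha(B)$, the map $\gamma$ being precisely the device that realizes the comparison in degree zero.

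The main obstacle I anticipate lies entirely on the graded side: controlling the degree shifts of the graded simple $E$-modules and translating the abstract assertion ``the two graded modules are graded-isomorphic'' into the concrete centralizer-and-extension condition of part (3). The ungraded content behind part (1) is essentially the classical argument, but the bookkeeping of shifts, the reduction of part (2) to invertibility of homogeneous elements of $C$, and above all the construction of $\gamma$ in part (3), are where the genuinely new, graded difficulties concentrate.
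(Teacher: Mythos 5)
First, a point of order: the paper contains no proof of this statement at all --- it is quoted from Hwang and Wadsworth \cite{hw} --- so your sketch can only be measured against the standard argument, whose skeleton you do reproduce correctly: intertwiners between the two module structures on $A$ over $E=A\otimes_F B^{\mathrm{op}}$ are exactly the right multiplications by elements $a$ satisfying $ba=a\alpha(b)$. The genuine problem is your structural claim about $E$. Since $A$ is graded \emph{central} simple over $F$ and $B^{\mathrm{op}}$ is graded simple, $E$ is graded \emph{simple} with center $Z$; this is the graded analogue of the classical fact that (central simple)$\,\otimes\,$(simple) is simple, and nothing like $Z\otimes_F Z$ occurs, because only one of the two factors has center $Z$. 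This is not cosmetic: the simplicity of $E$ is precisely why Skolem--Noether holds for simple (rather than semisimple) subalgebras, and the step you substitute for it --- ``the two modules carry the same multiplicities on each simple component because $\alpha$ is injective and compatible with $Z$'' --- is exactly the assertion that fails for genuinely semisimple $B$, where an embedding need not preserve ranks of central idempotents. The same unproved rank-matching resurfaces in your part (1), because you compare the modules in the \emph{ungraded} category, where even a graded simple $E$ need not be simple (the group algebra $FG$ with its tautological $G$-grading is a graded field, yet as a ring it is $F\times\cdots\times F$). The correct argument has to use the grading, and does: ${}_\iota A$ and ${}_\alpha A$ are \emph{graded} $E$-modules (this is where the hypothesis that $\alpha$ is a graded map enters), hence, $E$ being graded simple Artinian, each is a direct sum of degree shifts of the unique graded simple module $S$; the numbers of summands agree since shifting preserves $F$-dimension, and shifts become isomorphic once the grading is forgotten. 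That yields the ungraded isomorphism of part (1).

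Part (2) as written also has a gap. Decomposing an invertible intertwiner $a=\sum_g a_g$ into homogeneous components, each $a_g$ is again an intertwiner (the grading group is abelian and $\alpha$ is graded), and $a_g=c_g a$ with $c_g\in C$; but $c_g=a_g a^{-1}$ is in general \emph{not} homogeneous, so invertibility of nonzero \emph{homogeneous} elements of a graded division ring says nothing about it. Either use the hypothesis as actually stated ($C$ a division ring: then any nonzero $c_g$ is invertible, hence $a_g$ is an invertible homogeneous intertwiner), or, to get by with the weaker graded-division-ring hypothesis you quietly substituted, argue differently: $\mathrm{Hom}_E({}_\iota A,{}_\alpha A)=Ca$ is a graded left $C$-module, hence graded free with a homogeneous generator $\tilde a$, and $\tilde a$ is one-sidedly invertible (hence invertible, by finite dimensionality) because $a$ is. Your sketch conflates these two arguments and, as stated, completes neither. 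Finally, part (3) is restated rather than proved: the easy direction deserves a line (take $\gamma=\mathrm{int}(a^{-1})|_C$, which is graded because $a$ is homogeneous; note also that the condition must read ``$\gamma(C)$ centralizes $\alpha(B)$'' --- the ``$\gamma(B)$'' is a typo of the paper, meaningless since $\gamma$ is only defined on $C$, which you copied), while the hard direction --- producing a homogeneous $a$ from $\gamma$, i.e.\ realizing the required degree shift --- is the real content of item (3), and your proposal says nothing about how to do it beyond announcing that the equivalence ``would be shown''.
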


In particular, we have the following corollary:

\begin{corollary}
	Let $A$ be a central simple graded $F$-algebra. Then for \linebreak every graded $F$-algebra automorphism $\phi$ of $A$, there exists an invertible homogeneous element  $a\in A$, such that $\phi(x)=a^{-1}xa$, for all $x\in A$.
\end{corollary}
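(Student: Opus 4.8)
The plan is to read the corollary off the graded Skolem–Noether theorem by taking the subalgebra $B$ to be all of $A$. Indeed, a graded $F$-algebra automorphism $\phi$ of $A$ is in particular a graded $F$-algebra homomorphism $\alpha := \phi \colon B \longrightarrow A$ with $B = A$, so the theorem applies once the auxiliary subalgebras $C = C_A(B)$ and $Z = Z(B)$ attached to this choice of $B$ have been identified.

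First I would compute those data. With $B = A$, the centralizer $C = C_A(A)$ is by definition the center $Z(A)$, and since $A$ is a \emph{central} simple graded $F$-algebra its graded center is $F$; hence $C = Z = Z(A) = F$. Feeding this into part (1) of the theorem immediately yields an invertible element $a \in A$ with $\phi(x) = a^{-1}xa$ for all $x \in A$. The only thing still to be arranged is that $a$ can be taken homogeneous.

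To secure homogeneity I would invoke part (2): since $C = F$ is a graded field, it is in particular a graded division ring, so the element $a$ produced above may be chosen homogeneous, which is exactly the assertion of the corollary. As a robust alternative I would verify the criterion of part (3) directly: take $\gamma \colon C \longrightarrow A$ to be the inclusion $F \hookrightarrow A$. Because $\phi$ is an $F$-algebra homomorphism it fixes $F\cdot 1_A = Z$ pointwise, so $\gamma_{|Z} = \mathrm{id}_Z = \alpha_{|Z}$; and $\gamma(C) = F \subseteq Z(A)$ centralizes every element of $A$, in particular $\alpha(B) = A$. Thus the hypotheses of part (3) hold and $a$ can again be chosen homogeneous.

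The substance here is entirely in the bookkeeping rather than in any computation: the work is in genuinely using the hypotheses that $A$ is central (so that $C_A(A) = Z(A) = F$) and simple (so that $B = A$ is a legitimate graded simple subalgebra) in order to put oneself in the situation of the theorem. The one point that warrants care—and the closest thing to an obstacle—is the interpretation of the hypotheses one triggers: if one routes through part (2), one must read ``division ring'' in the graded sense so that the graded field $F$ qualifies, which is why I would keep part (3) in reserve as a reading-independent backup. Everything else is a direct specialization of the theorem to $B = A$.
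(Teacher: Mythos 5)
Your proposal is correct and matches the paper's intent exactly: the paper states this corollary as an immediate specialization of the graded Skolem--Noether theorem (offering no written proof), and your argument---taking $B=A$, identifying $C=C_A(A)=Z(A)=F$ by centrality, and invoking part (2) (or, equivalently, part (3)) to make $a$ homogeneous---is precisely that specialization carried out in full. Your remark about reading ``division ring'' in the graded sense, with part (3) as a backup, is a sensible precaution but causes no difficulty here since $F$ is an honest field.
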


Since $M_n(F)$ is a central simple algebra, it is a central simple graded algebra, and the following is a particular case of the former.

\begin{corollary}\label{Skolem}
	Let $F$ be a field and $\phi$ be a graded automorphism of $M_n(F)$. Then there exist an invertible homogeneous matrix $P\in M_n(F)$, such that $\phi(A)=P^{-1}AP$, for all $A\in M_n(F)$.
\end{corollary}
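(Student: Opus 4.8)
The plan is to obtain Corollary \ref{Skolem} as a direct specialization of the preceding corollary, applied with $A = M_n(F)$. The only substantive point to check is that $M_n(F)$, endowed with an arbitrary $G$-grading, satisfies the hypotheses of that corollary, namely that it is a central simple graded $F$-algebra; once this is in place, the conclusion is immediate.

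First I would recall the two classical facts about $M_n(F)$: its center consists precisely of the scalar matrices $F\cdot I$, and it possesses no nontrivial two-sided ideals. I would then upgrade each to its graded counterpart. Since $M_n(F)$ has no nontrivial two-sided ideals at all, it has in particular no nontrivial graded ideals, so it is graded-simple. For graded-centrality, I would observe that the scalar matrices $F\cdot I$ lie entirely in the identity homogeneous component; thus $Z(M_n(F)) = F\cdot I$ coincides, as a graded object, with the graded ground field $F$ concentrated in degree $e$. Hence $M_n(F)$ is a central simple graded $F$-algebra in the sense required by the graded Skolem--Noether theorem.

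With these hypotheses verified, I would apply the preceding corollary to $A = M_n(F)$: for the given graded automorphism $\phi$, it produces an invertible homogeneous element $P \in M_n(F)$ with $\phi(X) = P^{-1} X P$ for all $X \in M_n(F)$, which is exactly the assertion. (Internally, this homogeneity is guaranteed because the relevant centralizer $C_A(A) = Z(A) = F$ is a graded field, hence a division ring, so part (2) of the theorem applies; but this is already subsumed in the statement of the preceding corollary, which I am free to invoke.)

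I expect no genuine obstacle, since the result is a literal specialization. The one step deserving care is the verification that an arbitrarily $G$-graded $M_n(F)$ meets the graded central-simplicity hypothesis in the precise sense used by the cited graded Skolem--Noether theorem, and this reduces entirely to the two elementary facts about the ungraded center and two-sided ideals noted above.
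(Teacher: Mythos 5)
Your proposal is correct and follows exactly the paper's own route: the paper justifies Corollary \ref{Skolem} by remarking that $M_n(F)$, being a central simple algebra, is automatically a central simple graded algebra (simplicity rules out graded ideals, and the center $F\cdot I$ sits in the identity component), so the preceding corollary applies verbatim. Your write-up merely makes these verifications explicit, which is a faithful elaboration rather than a different argument.
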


The next result describes degree-inverting involutions on $M_n(F)$ with an elementary grading.

\begin{proposition}\label{elementary}
	
	Let $R = M_n(F)$ be a matrix algebra with an  involution $*:R\longrightarrow R$, defined by a non-degenerate bilinear form $\varphi$.  Let $G$ be an abelian group and $R=\oplus_{g\in G}R_g$ be an elementary grading on $R$. If $*$ is a degree-inverting involution on $R$, then $R$, as a graded algebra with involution, is isomorphic to $M_n(F)$ with an elementary $G$-grading defined by an $n$-tuple $(g_1, \dots ,g_n)$ and with an involution $X\longmapsto X^*=\Phi^{-1}X^t\Phi$, where
	\begin{enumerate}[(1)]
		\item $n=2l+m$, for some $l$, $m\in \mathbb{N}$, and 
		\begin{equation}\label{symmetric}
		\Phi=\begin{pmatrix} 
		0   & I_l & 0\\
		I_l & 0   & 0\\
		0   & 0   & I_m\\
		\end{pmatrix},
		\end{equation}
		if $\varphi$ is symmetric. Moreover, 
		\begin{enumerate}
			\item if $m=0$, then after a renumbering,
			$g_1g_{l+1}^{-1}=\cdots=g_lg_{2l}^{-1}$, and  $g_i^2=g_{i+l}^2$, for all $i\in\{1,\dots,l\}$;
			\item if $l\neq 0$ and $m\neq 0$, then after a renumbering $g_1g_{l+1}^{-1}=\cdots=g_kg_{2l}^{-1}$, $g_1^2=g_{2}^2=\cdots= g_{2l}^2$, and $g_1g_{l+1}=g_2g_{l+2}=\cdots = g_lg_{2l}=g_{2l+1}^2=\cdots = g_{m}^2$;
			\item if $l=0$, $*$ is the transpose involution and $g_1,\dots, g_m$ are arbitrary.
		\end{enumerate}
		\item $n=2l$, for some $l\in \mathbb{N}$, and
		\begin{equation}\label{skew}
		\Phi=\begin{pmatrix}
		0   & I_l\\
		-I_l & 0  \\
		\end{pmatrix}
		\end{equation}
		 if $\varphi$ is skew-symmetric. Moreover, after a renumbering $g_1g_{l+1}^{-1}=\cdots=g_lg_{2l}^{-1}$, and $g_i^2=g_{i+l}^2$, for all $i\in\{1,\dots,l\}$.
	\end{enumerate}
\end{proposition}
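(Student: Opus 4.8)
The plan is to reduce the whole statement to a single homogeneity property of the Gram matrix $\Phi$ and then to normalize $\Phi$ by a \emph{graded} change of basis. First I would use the preceding lemma, which says that the transpose $t$ is itself a degree-inverting involution of the elementary grading. Since $*$ is degree-inverting by hypothesis, the composite $\psi:=*\circ t$ sends each $R_g$ into $R_{(g^{-1})^{-1}}=R_g$, so $\psi$ is a \emph{degree-preserving} automorphism of the $G$-graded algebra $R$ (it is an automorphism, being a product of two anti-automorphisms). Writing $*$ in the form $X^{*}=\Phi^{-1}X^{t}\Phi$ coming from the corollary in Section~1, one computes $\psi(X)=(X^{t})^{*}=\Phi^{-1}X\Phi$, i.e.\ $\psi$ is conjugation by $\Phi$.

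Next I would apply the graded Skolem--Noether theorem in the form of Corollary~\ref{Skolem}: the graded automorphism $\psi$ is conjugation by an invertible \emph{homogeneous} matrix $P$. Comparing $\Phi^{-1}X\Phi=P^{-1}XP$ for all $X$ shows that $\Phi P^{-1}$ is central, so $\Phi=\lambda P$ for a scalar $\lambda$; hence $\Phi$ is homogeneous, say of degree $d$. Because $*$ is an involution, $\Phi$ is symmetric or skew-symmetric, and since transposition turns a homogeneous matrix of degree $d$ into one of degree $d^{-1}$, the relation $\Phi^{t}=\pm\Phi$ forces $d^{2}=e$. This is the conceptual heart of the argument and the place where the hypotheses (abelian $G$, elementary grading, degree-inverting $*$) are really used.

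With $\Phi$ homogeneous in hand, I would interpret it as the Gram matrix of a non-degenerate homogeneous bilinear form $\varphi$ on the graded space $V=\bigoplus_{h}V_{h}$ (with $\deg V_{i}=g_{i}^{-1}$). Homogeneity of degree $d$ means $\varphi(V_{h},V_{k})=0$ unless $h=dk$, so $\varphi$ couples $V_{k}$ with $V_{dk}$ and with nothing else. If $d\neq e$ the coupled components are distinct and pair off, and after rescaling over the algebraically closed field one reaches the hyperbolic normal form $\begin{pmatrix} 0 & I_{l} \\ \pm I_{l} & 0 \end{pmatrix}$; this gives $n=2l$, $m=0$ and, reading $g_{i}^{-1}g_{i+l}=d$ off the nonzero entries together with $d^{2}=e$, the relations ``$g_{i}g_{i+l}^{-1}$ constant'' and $g_{i}^{2}=g_{i+l}^{2}$. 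If $d=e$ the form restricts to each $V_{h}$ separately: in the symmetric case it diagonalizes to $I$ inside each $V_{h}$ (the transpose involution, with the $g_{i}$ unconstrained), while in the skew case each $V_{h}$ carries a symplectic form. The decisive point is that all these normalizing basis changes can be performed \emph{within} a single homogeneous component (or within a coupled pair $V_{h},V_{dh}$ of equal dimension), so the change-of-basis matrix is homogeneous and the resulting conjugation is a genuine \emph{graded} isomorphism of $M_{n}(F)$; combined with a renumbering of the tuple $(g_{1},\dots,g_{n})$ it yields the claimed isomorphism of graded algebras with involution. Sorting blocks by whether a component is self-coupled ($d=e$) or cross-coupled, and recording the equation $g_{i}^{-1}g_{j}=d$ on each block, produces the three symmetric shapes (1a)--(1c) and the single skew shape (2).

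The main obstacle is the step just described: securing a \emph{homogeneous} normalizing basis rather than an arbitrary one, so that the normalization is realized by a graded automorphism and not merely by an abstract congruence of forms. Over an algebraically closed field the scalar adjustments (square roots to turn a coupling block into $I_{l}$, or to diagonalize a symmetric form) are routine, but one must check at each stage that the vectors used lie in a single $V_{h}$ or in a matched pair $V_{h},V_{dh}$ of equal dimension, which is exactly what non-degeneracy of $\varphi$ guarantees. The remaining work --- translating the block structure of the normalized $\Phi$ into the explicit degree relations in (1) and (2) --- is then bookkeeping entirely parallel to Lemmas~\ref{Lemma1} and~\ref{Lemma2}, with the degree-preserving condition ``$g_{\sigma(i)}g_{i}$ constant'' of those lemmas (where $\sigma$ is the permutation underlying the normal-form $\Phi$) replaced by its degree-inverting analogue ``$g_{\sigma(i)}g_{i}^{-1}$ constant''.
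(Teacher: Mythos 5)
Your proposal is correct, and its core is the same as the paper's proof: observe that $X\mapsto\Phi^{-1}X\Phi$ is a \emph{graded} automorphism (the paper gets this from $\deg(\Phi^{-1}X^t\Phi)=\deg(X^t)$ rather than from the composite $*\circ t$, which is a cosmetic difference), invoke the graded Skolem--Noether theorem (Corollary \ref{Skolem}) to make $\Phi$ homogeneous, normalize $\Phi$ by degree-preserving changes of basis inside the homogeneous components of $V$ (the paper's block decomposition $\Phi=(\Phi_{ij})$ indexed by the distinct degrees $h_1,\dots,h_k$ is exactly your coupling picture), and read the relations among $g_1,\dots,g_n$ off the normal form.

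Where you genuinely diverge is in exploiting that the homogeneity degree $d$ of $\Phi$ is one global group element with $d^2=e$ (from $\Phi^t=\pm\Phi$). The paper analyzes blocks locally (``if $\Phi_{ii}\neq 0$\dots; if $\Phi_{ii}=0$\dots''), as though self-coupled and cross-coupled components could coexist, and this is why its statement carries the mixed case (1b) with $l\neq 0$ and $m\neq 0$. Your observation that $\Phi_{ij}\neq 0$ forces $h_i^{-1}h_j=d$ for the \emph{same} $d$ makes the dichotomy global: either $d=e$ (every component self-coupled, giving the transpose case (1c) when $\varphi$ is symmetric and form (\ref{skew}) when skew) or $d\neq e$ (every component cross-coupled, giving (1a) or (2) with $m=0$); the mixed shape never arises. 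This is a sharpening, not a gap: the conclusion of Proposition \ref{elementary} is a disjunction of normal forms, and always landing in (1a), (1c) or (2) satisfies it. In fact your version is cleaner than the paper's treatment of (1b): there, for $i\leq l$ and $j\geq 2l+1$ one has $e_{ij}^{*}=e_{j,\,i+l}$ (not $e_{i+l,\,j}$ as written), so the degree-inverting condition forces $g_i=g_{i+l}$; the hyperbolic block then sits inside a single homogeneous component and diagonalizes away over the algebraically closed field, collapsing (1b) into (1c) --- exactly what your global-degree argument predicts. The only loose phrase in your write-up is the final ``sorting blocks by whether a component is self-coupled or cross-coupled,'' which suggests per-component mixing that your own $d^2=e$ analysis has already ruled out; the substance of your case analysis is right.
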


\begin{proof}
	Let now $h_1,\dots,h_k$ be all distinct elements of $G$ among $g_1,\dots,g_n$. Permuting $g_1,\dots,g_n$, i.e., changing the basis of $V$, we chose the isomorphic copy of $M_n(F)$ such that 
	\[(g_1,\dots,g_n)=(h_1,\dots h_1,h_2,\dots,h_2,\dots,\dots,h_k\dots,h_k)\]
	Each $h_i$ above appears $m_i$ times, and 
	\[V=V_{h_1^{-1}}\oplus\cdots \oplus V_{h_k^{-1}}\]
	with $\deg(V_{h_i^{-1}})=h_i^{-1}$ and $\dim(V_{h_i^{-1}})=m_i$, for $i\in\{1,\dots,k\}$. Fixing any basis in $V_{h_1^{-1}}, \dots, V_{h_k^{-1}}$ we obtain an elementary grading on $M_n(F)$ isomorphic to the initial one, such that any $A\in M_n(F)$ decomposes in $k^2$ blocks \[M=\begin{pmatrix}
	M_{11}  & \cdots &  M_{1k}\\
	\vdots & \ddots & \vdots\\
	M_{k1} & \cdots & M_{kk}\
	\end{pmatrix}\] 
	where $M_{ij}$ is of order $m_i\times m_j$ and the matrix units of this blocks are of degree $h_i^{-1}h_j$.
	
	Now let us consider $*:M_n(F)\longrightarrow M_n(F)$ an involution satisfying $((M_n(F))_g)^*\subseteq (M_n(F))_{g^{-1}}$. Let $\Phi\in M_n(F)$ be the matrix of the bilinear form defining $*$. Write as above \[\Phi=\begin{pmatrix}
	\Phi_{11}  & \cdots &  \Phi_{1k}\\
	\vdots & \ddots & \vdots\\
	\Phi_{k1} & \cdots & \Phi_{kk}\
	\end{pmatrix}\]
	Now we observe that if $X\in M_n(F)$ is homogeneous of degree $g$, then $X^t$ is homogeneous of degree $g^{-1}$. Since $(M_n(F)_g)^*\subseteq M_n(F)_{g^{-1}}$ for each $g$, we obtain for all $X\in M_n(F)$,  \[\deg(\Phi^{-1}X^t\Phi)=\deg(X^*)=\deg(X)^{-1}=\deg(X^t).\] In particular, applying the above for $X^t$, we obtain that $\deg(\Phi^{-1}X\Phi)=\deg(X)$, for all $X\in M_n(F)$. Hence the map
	\[\begin{array}{rcl}
	M_n(F) & \longrightarrow & M_n(F)\\
	X      & \longmapsto     & \Phi^{-1}X\Phi\\
	\end{array}\]
	is a graded automorphism of $M_n(F)$. In particular, it follows from Corollary \ref{Skolem} that $\Phi$ can be chosen to be homogeneous. As a consequence, since the $h_i$ are pairwise distinct and $\Phi$ is invertible, in the block-description of $\Phi$, one obtains that in each block-row and in each block-column, there is exactly one nonzero block. If $\Phi_{ii}\neq 0$, since $\varphi$ is non-degenerate,  changing the basis of $V_{h_i^{-1}}$ we can assume that $\Phi_{ii}=I_{m_i}$, the identity matrix of size $m_i$, if $\varphi$ is symmetric or \[\Phi_{ii}=\begin{pmatrix}
	0 & I \\ 
	-I& 0 \\
	\end{pmatrix}\] 
	if $\varphi$ is skew-symmetric. 
	
	If $\Phi_{ii}=0$, since $\Phi$ is symmetric or skew-symmetric  and non-degenerate, permuting $h_1,\dots, h_k$, we can assume that $m_i=m_{i+1}$ and $\Phi_{i i+1}=I$. As a consequence, $\Phi_{i+1 i}=I$, if $\varphi$ is symmetric, and $\Phi_{i+1 i}=-I$, if $\varphi$ is skew symmetric. Choosing an appropriate homogeneous basis of $F^n$, we obtain that 
	\[\Phi=\begin{pmatrix}
	0   & I_l & 0\\
	I_l & 0   & 0\\
	0   & 0   & I_m\\
	\end{pmatrix}\]
	if $\varphi$ is symmetric or 
	\[\Phi=\begin{pmatrix}
	0   & I_l\\
	-I_l & 0  \\
	\end{pmatrix}\]
	if $\varphi$ is skew-symmetric.
	
	Suppose now $\varphi$ skew-symmetric. Using that $\Phi$ is of the form (\ref{skew}), for each $i$ and $j$, we have $e_{ij}^*=\pm e_{j-l,i+l}$, where the sum on the indexes are taken modulo $2l=n$. Hence, $\deg(e_{ij}^*)=g_{j-l}^{-1}g_{i+l}$. On the other hand, since $*$ inverts the degree, we obtain $\deg(e_{ij}^*)=g_j^{-1}g_i$. In particular, for each $i\leq l$ and for $j=1+l$,  
	\[g_{1+l}^{-1}g_i=g_1^{-1}g_{i+l}\]
	which is equivalent to \[g_1g_{1+l}^{-1}=g_{i+l}g_i^{-1}\]
	Hence, we obtain \[g_1g_{1+l}^{-1}=g_2g_{2+l}^{-1}=\cdots=g_lg_{2l}^{-1}.\]
	
	Applying $*$ on $e_{i, i+l}$ we obtain $e_{i,i+l}^*=\pm e_{i,i+l}$ and analyzing the degrees, we obtain for each $i$, \[g_{i+l}^{-1}g_i=g_i^{-1}g_{i+l}.\]
	Since $G$ is abelian, $g_i^2=g_{i+l}^2$, for all $i$.
	
	Now let us consider the case where $\varphi$ is symmetric and $\Phi$ is as in (\ref{symmetric}). If $i,j\leq 2l$, the same arguments above show that 
	$g_1g_{l+1}^{-1}=\cdots=g_lg_{2l}^{-1}$, $g_i^2=g_{i+l}^2$, for all $i\in\{1,\dots,l\}$.
	
	If $i,j\geq 2l+1$, we obtain $e_{ij}^*= e_{ji}$ and such equation implies is no restriction on the elements of $G$. In particular, if $l=0$, any $(g_1,\dots,g_n)\in G^n$ induces a grading satisfying $R_g^*\subseteq R_{g^{-1}}$. If $m=0$, the relations satisfied by $g_1,\dots,g_n$ are the same as in the skew-symmetric case.
	
	Finally, if $l\neq 0$ and $m\neq 0$, consider $i\leq 2l$ and $j\geq 2l+1$. Then $e_{ij}^*=e_{i+l,j}$, again, the first indexes are taken modulo $2l$. Hence for all $i\leq 2l$ and $j\geq 2l+1,$ \[g_{j}^{-1}g_1=g_{i+l}^{-1}g_j\] and we obtain $g_j^2=g_ig_{i+l}$. In particular, $g_1g_{l+1}=g_2g_{l+2}=\cdots= g_lg_{2l}=g_{2l+1}^2=\cdots=g_m^2$. Since we also have $g_1g_{l+1}^{-1}=g_2g_{l+2}^{-1}=\cdots= g_lg_{2l}^{-1}$, we obtain $g_1^2=g_2^2=\cdots= g_{2l}^2$.
\end{proof}

We now turn our attention to degree-inverting involutions on $M_n(F)$ graded by $\varepsilon$-gradings. The next lemma shows that there are no such degree-inverting involutions on $M_n(F)$, unless $n\leq 2$.

\begin{lemma}
	Let $R=M_n(F)=\bigoplus_{g\in G}R_g$, $n\geq 2$, be graded by an $\varepsilon$-grading. Let $\varphi:R\longrightarrow R$ be an antiautomorphism.
	If for all $g\in G=\mathbb{Z}_n\times \mathbb{Z}_n$, $\varphi(R_g)\subseteq R_{g^{-1}}$, then $n=2$ and $\varphi$ is given by $\varphi(X)=\Phi^{-1}X\Phi$, where $\Phi$ coincides with a scalar multiple of one of the matrices $I$, $X_a$, $X_b$ or $X_aX_b$.
\end{lemma}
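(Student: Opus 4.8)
The plan is to first extract a numerical constraint on $n$ from the interaction of $\varphi$ with the multiplicative structure of the $\varepsilon$-grading, and then to settle the resulting case $n=2$ using the duality of Section~1 together with the graded Skolem--Noether theorem.

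Since the $\varepsilon$-grading is fine with support all of $G=\mathbb{Z}_n\times\mathbb{Z}_n$, each component $R_g$ is one-dimensional, spanned by $C_g=X_a^iX_b^j$ for $g=(\bar i,\bar j)$, and every nonzero homogeneous element is invertible. As $\varphi$ is a bijective antiautomorphism with $\varphi(R_g)\subseteq R_{g^{-1}}=R_{-g}$, there are scalars $\lambda_g\in F^{*}$ with $\varphi(C_g)=\lambda_g C_{-g}$. I would then derive from $X_aX_b=\varepsilon X_bX_a$ the product rule $C_gC_h=\varepsilon^{-jk}C_{g+h}$, where $g=(\bar i,\bar j)$ and $h=(\bar k,\bar l)$. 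Applying $\varphi$ and using $\varphi(C_gC_h)=\varphi(C_h)\varphi(C_g)$ yields, after cancelling the nonzero element $C_{-(g+h)}$, the twisted relation
\[\lambda_{g+h}=\lambda_g\lambda_h\,\varepsilon^{jk-li}.\]

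The decisive step is to exploit the symmetry $g+h=h+g$: interchanging the roles of $g$ and $h$ in the relation above gives $\lambda_{g+h}=\lambda_g\lambda_h\,\varepsilon^{li-jk}$, so that $\varepsilon^{2(jk-li)}=1$ for all $g,h\in G$. Taking $g=(\bar 1,\bar 0)$ and $h=(\bar 0,\bar 1)$ forces $\varepsilon^{-2}=1$; since $\varepsilon$ is a primitive $n$-th root of unity this means $n\mid 2$, and as $n\geq 2$ we conclude $n=2$. I expect this computation to be the main point of the proof, the only real obstacle being careful bookkeeping of the exponents coming from the commutation relation.

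It remains to describe $\varphi$ when $n=2$, so that $\varepsilon=-1$ and $G=\mathbb{Z}_2\times\mathbb{Z}_2$. Here every element of $G$ is its own inverse, hence $\varphi$ is a degree-preserving antiautomorphism, and by the duality recalled in Section~1 it has the form $\varphi(X)=\Psi^{-1}X^{t}\Psi$ for some invertible $\Psi$. A direct check shows $I^{t}=I$, $X_a^{t}=X_a$, $X_b^{t}=X_b$ and $(X_aX_b)^{t}=-X_aX_b$, so the transpose $t$ is itself a degree-preserving antiautomorphism of this grading. Consequently $\varphi\circ t$, which equals the inner automorphism $X\mapsto \Psi^{-1}X\Psi$, is a graded automorphism of $M_2(F)$. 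By Corollary~\ref{Skolem} it is conjugation by a homogeneous invertible matrix $P$; since the homogeneous invertible elements of the $\varepsilon$-grading are precisely the nonzero scalar multiples of $I$, $X_a$, $X_b$ and $X_aX_b$, and two invertible matrices induce the same inner automorphism exactly when they differ by a central scalar, we obtain $\Psi=\mu P$ with $P\in\{I,X_a,X_b,X_aX_b\}$ and $\mu\in F^{*}$. Setting $\Phi=\Psi$ then exhibits $\varphi$ as $X\mapsto \Phi^{-1}X^{t}\Phi$ with $\Phi$ a scalar multiple of one of the four listed matrices, as claimed.
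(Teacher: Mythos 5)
Your proof is correct, and it takes a genuinely different route from the paper's on both halves of the argument. For the reduction to $n=2$, the paper writes $\varphi(A)=\Phi^{-1}A^{t}\Phi$ and works with the matrix $\Phi$ itself: the degree condition on $X_b$ makes $\Phi$ an eigenvector of conjugation by $X_b$, hence $\Phi=X_a^{i}Q$ with $Q$ a polynomial in $X_b$, and the degree condition on $X_a$ then gives $X_a\Phi X_a=\gamma\Phi$, where comparison of homogeneous degrees forces $i+2\equiv i \pmod n$. You never introduce $\Phi$ at this stage; you encode $\varphi$ by its structure constants $\lambda_g$ on the one-dimensional components and play the reversal of multiplication against the commutativity of $G$, obtaining $\varepsilon^{2(jk-li)}=1$ and hence $n=2$. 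This is a cleaner computation (I checked the exponent bookkeeping: $C_gC_h=\varepsilon^{-jk}C_{g+h}$ and $C_{-h}C_{-g}=\varepsilon^{-li}C_{-(g+h)}$ are both right), and it isolates exactly where the antiautomorphism hypothesis matters: running the same computation for an automorphism gives no constraint, consistent with the existence of degree-inverting automorphisms for every $n$. For the classification at $n=2$, the paper reruns its eigenvector argument with $a$ and $b$ interchanged and intersects the two resulting expressions for $\Phi$, whereas you compose $\varphi$ with the transpose (after checking $t$ preserves the four components), get a graded automorphism, and invoke Corollary \ref{Skolem} together with the observation that the homogeneous invertible elements of a fine grading are exactly the nonzero scalar multiples of the $C_g$; this reuses machinery the paper already set up for Proposition \ref{elementary} and is shorter. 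One cosmetic point: the lemma as printed says $\varphi(X)=\Phi^{-1}X\Phi$, which is evidently a typo for $\varphi(X)=\Phi^{-1}X^{t}\Phi$ (this is what the paper's own proof establishes), and the latter is what you prove, so your conclusion matches the intended statement.
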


\proof Any antiautomorphism $\varphi$ of $M_n(F)$ is of the form
\[\varphi(A)=\Phi^{-1}A^t\Phi,\]
for some nonsingular matrix $\Phi$.

Let $\varepsilon\in F$ be a primitive $n$-th root of 1.
We consider the matrices $X_a$ and $X_b$ as in (\ref{XaXb}). Let us analyze the action of $\varphi$ on $X_a$ and $X_b$.

Since $X_b^{t}=X_b^{-1}$, we have
\[\varphi(X_b)=\Phi^{-1}X_b^t\Phi=\Phi^{-1}X_b^{-1}\Phi.\]

Since $\varphi(R_g)\subseteq R_{g^{-1}}$, $\varphi(X_b)=\alpha X_b^{-1}$, for some $\alpha\in F^\times$.
Then, \[\Phi^{-1}X_b^{-1}\Phi=\alpha X_b^{-1}.\]

From the above equation and from the fact that $X_b^n=I_n$,  we obtain that $\alpha^n=1$, i.e., $\alpha= \varepsilon^i$ for some $i\in \{0,\dots, n-1\}$. Moreover,
\[X_b^{-1}\Phi X_b=\varepsilon^i\Phi.\]

Observe that $X_b$ defines a linear transformation of $M_n(F)$ by conjugation, given by
\[\begin{array}{cccc}
T: & M_n(F) & \longrightarrow & M_n(F)\\
&  A     & \longmapsto     & X_b^{-1}AX_b\\
\end{array},\]
and that $\Phi$ is an eigenvector of $T$ associated to the eigenvalue $\varepsilon^i$.

Let now $P$ be the linear span of $\{I, X_b, \dots, X_b^{n-1}\}$. Then \[R=P\oplus X_aP\oplus \cdots \oplus X_a^{n-1}P.\]

Since $X_b^{-1}X_aX_b=\varepsilon X_a$, for each $i$, $T$ acts in $X_a^iP$ as multiplication by $\varepsilon^i$. In particular, all eigenvectors associated to the eigenvalues $\varepsilon^i$, are in $X_a^i P$. Then $\Phi\in X_a^iP$, i.e., there exists $Q\in P$ such that \[\Phi=X_a^iQ.\]

Let us now analyze the action of $\varphi$ on $X_a$. Since for all $g\in G$,  $\varphi(R_g)\subseteq R_{g^{-1}}$, $\varphi(X_a)=\gamma X_a^{-1}$, for some $\gamma\in F^\times$. Then
\[\varphi(X_a)=\Phi^{-1}X_a\Phi=\gamma X_a^{-1},\] which implies that \[X_a\Phi X_a=\gamma \Phi.\]
Write $Q=\sum_{j=0}^{n-1}\alpha_jX_b^j$, $\alpha_j\in F$ and we obtain $\Phi=X_a^i\sum_{j=0}^{n-1}\alpha_jX_b^j$. Thus
\[X_a\Phi X_a=X_a^iX_a\sum_{j=0}^{n-1}\alpha_jX_b^jX_a=\sum_{j=0}^{n-1}\varepsilon^j \alpha_jX_a^iX_b^jX_a^2.\]
As a consequence,
\[\sum_{j=0}^{n-1}\gamma\alpha_jX_a^iX_b^j=\sum_{j=0}^{n-1}\varepsilon^j \alpha_jX_a^iX_b^jX_a^2.\]
Since $\Phi\neq 0$, there exist $j$ such that $\alpha_j\neq 0$.
Hence comparing the $G$-degree in the above equation, we obtain that $i+2=i \; (\hspace{-.2cm}\mod{n})$, i.e., $n=2$.

Now, since $\Phi= X_a^iQ$ with $Q=\alpha_0I+\alpha_1X_b$, we obtain 
\begin{equation}\label{alpha}
\Phi= \alpha_0X_a^i+\alpha_1 X_a^iX_b.
\end{equation}
Since $n=2$, $X_a^{-1}=X_a$,  $X_b^{-1}=X_b$ and the argument used above applies if we change $a$ and $b$. As a consequence, there exists $j\in\{0,1\}$, and $\beta_0,\beta_1\in F$, such that 
\begin{equation}\label{beta}
\Phi=\beta_0X_b^j+\beta_1X_b^jX_a.
\end{equation}
By comparing equations (\ref{alpha}) and (\ref{beta}) above, for all combinations of $1 \leq i,j\leq 2$, we obtain that $\Phi$ is one of $I$, $X_a$, $X_b$ or $X_aX_b$. \endproof

\begin{corollary}\label{corolario}
	Let $R=M_n(F)$, $n\geq 2$, be graded by an $\varepsilon$-grading and $*: R\longrightarrow R$ be an involution.
	If for all $g\in G=\mathbb{Z}_n\times \mathbb{Z}_n$, $R_g^*\subseteq R_{g^{-1}}$, then $n=2$.
\end{corollary}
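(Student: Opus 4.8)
The plan is to observe that this corollary is an immediate consequence of the preceding Lemma, since the hypotheses of the Lemma are strictly weaker than those assumed here. Recall that by definition an involution $*:R\longrightarrow R$ is an antiautomorphism of order two; in particular, it is an antiautomorphism of $R=M_n(F)$. Thus $*$ satisfies all the requirements placed on the map $\varphi$ in the preceding Lemma.

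Concretely, I would argue as follows. Set $\varphi=*$. The hypothesis that $R_g^*\subseteq R_{g^{-1}}$ for all $g\in G=\mathbb{Z}_n\times\mathbb{Z}_n$ is precisely the condition $\varphi(R_g)\subseteq R_{g^{-1}}$ appearing in the Lemma. Since $M_n(F)$ carries the given $\varepsilon$-grading with $n\geq 2$, every hypothesis of the Lemma is met. Invoking the Lemma, we conclude directly that $n=2$, which is the assertion of the corollary.

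There is essentially no obstacle here, as the statement is a specialization: the only point worth making explicit is that an involution is in particular an antiautomorphism, so no additional argument beyond citing the Lemma is required. (The Lemma in fact yields more, namely the explicit list of possible matrices $\Phi\in\{I,X_a,X_b,X_aX_b\}$ up to scalar, but the corollary records only the numerical conclusion $n=2$, which is all that is needed for the subsequent classification.)

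\begin{proof}
By definition, the involution $*$ is an antiautomorphism of $R=M_n(F)$. Taking $\varphi=*$ in the preceding Lemma, the assumption $R_g^*\subseteq R_{g^{-1}}$ for all $g\in G$ is exactly the hypothesis $\varphi(R_g)\subseteq R_{g^{-1}}$ required there. Hence the Lemma applies and gives $n=2$.
\end{proof}
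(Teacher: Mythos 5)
Your proposal is correct and matches the paper exactly: the paper states this corollary without proof, treating it as the immediate specialization of the preceding lemma obtained by noting that an involution is in particular an antiautomorphism. Your write-up simply makes that one-line deduction explicit, which is all that is required.
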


\begin{remark}
	Since in the Klein group $\mathbb{Z}_2\times \mathbb{Z}_2$, each element is its own inverse, Lemma \ref{Lemma3} also holds for involutions $*$ satisfying $(R_g)^{*}\subseteq R_{g^{-1}}$.
	We restate this result in the new terminology.
\end{remark}

\begin{corollary}\label{Lemma3'}
		Let $R=M_2(F)$ be a $2\times 2$ matrix algebra endowed with an involution $*:R\longrightarrow R$ corresponding to a symmetric or skew symmetric non-degenerate bilinear form with matrix $\Phi$ and with the $(-1)$-grading of $M_2(F)$ by the group $G=\langle a \rangle \times \langle b \rangle$. Then $*$ is a degree-inverting involution if and only if $\Phi$ can be chosen to be one of the matrices $I$, $X_a$, $X_b$ or $X_aX_b$.

\end{corollary}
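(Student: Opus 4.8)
The plan is to observe that the statement is an immediate translation of Lemma \ref{Lemma3} once one notes the special structure of the grading group. The grading group here is $G=\langle a\rangle\times\langle b\rangle$, which in the setting of the $(-1)$-grading (the $\varepsilon$-grading of $M_2(F)$ with $\varepsilon=-1$ and $n=2$) is the Klein four-group $\mathbb{Z}_2\times\mathbb{Z}_2$. The only fact about this group that I would use is that every element is an involution, i.e. $g^{-1}=g$ for all $g\in G$.

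First I would unwind the two relevant conditions on $*$. An involution $*$ is a graded involution when $(R_g)^*\subseteq R_g$ for all $g$, while it is degree-inverting when $(R_g)^*\subseteq R_{g^{-1}}$ for all $g$. Since $g^{-1}=g$ holds for every $g\in\mathbb{Z}_2\times\mathbb{Z}_2$, the subspaces $R_g$ and $R_{g^{-1}}$ coincide for each $g$, so the two inclusions are literally the same condition. Hence, for this particular grading, $*$ is degree-inverting if and only if $*$ is a graded involution.

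With that identification in hand, the conclusion is immediate: by Lemma \ref{Lemma3}, $*$ is a graded involution if and only if the matrix $\Phi$ of the defining symmetric or skew-symmetric non-degenerate bilinear form can be chosen to be one of $I$, $X_a$, $X_b$, or $X_aX_b$. Combining this equivalence with the collapse of the two conditions from the previous paragraph yields exactly the claimed characterization of degree-inverting involutions.

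I do not expect any genuine obstacle here, since the result is a restatement rather than a new theorem. The only point requiring care is the verification that $g^{-1}=g$ for all $g$ in the grading group, which is precisely what makes the degree-preserving and degree-inverting conditions collapse into one; this is exactly the content of the Remark preceding the statement.
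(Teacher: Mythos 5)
Your proof is correct and is essentially identical to the paper's own justification: the remark preceding the corollary makes exactly your observation that every element of the Klein group $\mathbb{Z}_2\times\mathbb{Z}_2$ is its own inverse, so $R_g=R_{g^{-1}}$ for all $g$, the degree-inverting and graded-involution conditions coincide, and Lemma \ref{Lemma3} applies verbatim.
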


A combination of the above results and the theorem on the  classification of $G$-gradings on $M_n(F)$ (Theorem \ref{gradings}), gives us the main result of the paper, which describes the degree-inverting involutions on $M_n(F)$.

\begin{theorem}\label{maintheorem}
	Let $R=M_n(F)$ be a matrix algebra over an algebraically closed field of characteristic zero graded by a finite abelian group $G$. Suppose that $*$ is a degree-inverting involutionon $R$. Then, there is a graded isomorphism  \[R\cong R^{(0)}\otimes R^{(1)}\otimes  \dots \otimes R^{(k)},\] where $R^{(0)}$ is a subalgebra of $R$ with an elementary grading and $R^{(1)}\otimes \dots \otimes R^{(k)}$ a subalgebra of $R$ with a fine grading. If both subalgebras $R_0$ and $R^{(1)}\otimes \cdots\otimes R^{(k)}$ are invariant under the involution $*$, then $n=2^km$ and
	\begin{enumerate}[(1)]
		\item $R^{(0)}=M_m(F)$, with an elementary $G$-grading defined by an $m$-tuple $\overline{g}=(g_1,\dots,g_m)$ of elements of
		$G$. The involution $*$ acts on $M_m(F)$ as $X^*=\Phi^{-1}X^t\Phi$, where $\Phi$ and the elements $g_1,\dots,g_m$ are as in Proposition \ref{elementary}.
		\item  $R^{(1)}\otimes \cdots \otimes R^{(k)}$ is a $T=T_1\times \cdots \times T_k$-graded algebra and any $R^{(i)}\cong M_2(F)$ is $T_i\cong \mathbb{Z}_2\times \mathbb{Z}_2$-graded algebra. The involution $*$ acts on $R^{(1)}\otimes \cdots \otimes R^{(k)}$ as in Corollary \ref{Lemma3'}.
	\end{enumerate}
\end{theorem}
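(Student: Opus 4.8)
The plan is to assemble the three preceding results along the decomposition furnished by Theorem \ref{gradings}. That theorem gives a graded isomorphism $R\cong R^{(0)}\otimes S$, where $R^{(0)}=M_m(F)$ carries an elementary $G$-grading and $S=M_k(F)$ carries a fine grading supported on a subgroup $H=H_1\times\cdots\times H_t$ of $G$, with $S\cong M_{n_1}(F)\otimes\cdots\otimes M_{n_t}(F)$ and each $R^{(i)}=M_{n_i}(F)$ endowed with an $\varepsilon_i$-grading by $H_i\cong\mathbb{Z}_{n_i}\times\mathbb{Z}_{n_i}$. Writing $n=m\,n_1\cdots n_t$, the goal is to identify the involution on each tensor factor and to show that every fine factor is forced to be $2\times 2$.

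First I would reduce the invariance hypothesis to the individual factors. By assumption $R^{(0)}$ and $S$ are $*$-invariant, and since the grading restricted to the invariant subalgebra $S$ is again degree-inverting, I claim each $\varepsilon_i$-factor is automatically $*$-invariant. Indeed, the grading on $S$ being fine, for $h\in H_i$ the component $S_h$ is one-dimensional and is spanned by the embedded generator $1\otimes\cdots\otimes C_h\otimes\cdots\otimes 1$, so that $R^{(i)}=\bigoplus_{h\in H_i}S_h$. Because $H_i$ is a subgroup (so $h\in H_i$ iff $h^{-1}\in H_i$) and $*$ is degree-inverting, $*(S_h)\subseteq S_{h^{-1}}\subseteq R^{(i)}$, whence $*(R^{(i)})=R^{(i)}$. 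With every factor $R^{(0)},R^{(1)},\dots,R^{(t)}$ now invariant, Lemma 1 of \cite{BahturinShestakovZaicev} applies and expresses $*$ as $*_0\otimes *_1\otimes\cdots\otimes *_t$, where each $*_i$ is an involution on $R^{(i)}$ given by a non-degenerate symmetric or skew-symmetric bilinear form $\varphi_i$.

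Next I would verify that each restricted involution $*_i$ is itself degree-inverting for the grading on $R^{(i)}$. For a homogeneous $x\in R^{(i)}$ of degree $g$, the element $1\otimes\cdots\otimes x\otimes\cdots\otimes 1$ is homogeneous of degree $g$ in the tensor grading (the identities being of degree $e$); applying the global degree-inverting property together with $(a_0\otimes\cdots\otimes a_t)^*=a_0^{*_0}\otimes\cdots\otimes a_t^{*_t}$ yields $\deg(x^{*_i})=g^{-1}$. Once this is in place the classification is immediate: Proposition \ref{elementary} applied to the elementary factor $R^{(0)}=M_m(F)$ produces the matrix $\Phi$ and the stated relations among $g_1,\dots,g_m$, giving part (1); and for each $\varepsilon_i$-factor, Corollary \ref{corolario} forces $n_i=2$, so $R^{(i)}\cong M_2(F)$ is $H_i\cong\mathbb{Z}_2\times\mathbb{Z}_2$-graded and Corollary \ref{Lemma3'} pins down $\varphi_i$ to one of $I$, $X_a$, $X_b$, $X_aX_b$, giving part (2). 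Finally $n=m\,n_1\cdots n_t=2^t m$ with $k=t$, which is the asserted $n=2^k m$.

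The step I expect to require the most care is the reduction of the $*$-invariance hypothesis from the two large subalgebras to the individual $\varepsilon_i$-factors, since without it Lemma 1 of \cite{BahturinShestakovZaicev} cannot be invoked factor by factor; the argument above hinges precisely on the fineness of the grading on $S$ together with $H_i$ being a subgroup closed under inversion. One must also keep the bookkeeping between the symmetric and skew-symmetric cases consistent with how the form $\varphi=\varphi_0\otimes\cdots\otimes\varphi_t$ distributes over the factors when matching the conclusions of Proposition \ref{elementary} and Corollary \ref{Lemma3'}.
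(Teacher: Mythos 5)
Your proposal is correct and follows essentially the same route as the paper, which states the theorem as a direct combination of Theorem \ref{gradings}, Lemma 1 of \cite{BahturinShestakovZaicev}, Proposition \ref{elementary}, Corollary \ref{corolario} and Corollary \ref{Lemma3'}; your writeup simply makes explicit the details the paper leaves implicit (the $*$-invariance of each $\varepsilon_i$-factor via fineness and closure of $H_i$ under inversion, and the fact that each restricted involution is again degree-inverting), and these verifications are sound.
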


\section{Degree-inverting involutions on upper triangular matrices}

We now turn our attention to the algebra of upper triangular matrices over $F$, $UT_n(F)$, where $F$ is an algebraically closed field of characteristic zero. 

The graded involutions on $UT_n$ have been described in \cite{Valenti-Zaicev}. In that paper, the authors use the description of gradings and involutions on $UT_n$ given in \cite{Valenti-Zaicev0} and \cite{DiVKoshLaS} respectively. We recall such results here. First, the description of gradings.

\begin{theorem}\cite[Theorem 7]{Valenti-Zaicev0}\label{gradingUTn}
	Let $G$ be an arbitrary group and $F$ a field. Suppose that the algebra $UT_n(F)=R=\oplus_{g\in G} R_g$ is $G$-graded. Then $R$, as a $G$-graded algebra, is isomorphic to $UT_n$, with an elementary $G$-grading.
\end{theorem}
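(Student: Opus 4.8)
The plan is to reduce everything to producing inside $R=UT_n(F)$ a complete set of orthogonal primitive idempotents that are \emph{homogeneous}. The crucial observation is that a homogeneous idempotent automatically has neutral degree: if $f=f^2$ and $f\in R_g$, then $f\in R_{g^2}$, forcing $g=g^2$, i.e.\ $g=e$. So it suffices to find orthogonal primitive idempotents $f_1,\dots,f_n\in R_e$ with $f_1+\cdots+f_n=1$. Granting these, the Peirce decomposition $R=\bigoplus_{i,j}f_iRf_j$ is compatible with the grading; ordering the $f_i$ along the chain determined by $f_iRf_j\neq 0\Rightarrow i\le j$ (a total order, since $R$ is abstractly an algebra of upper triangular matrices), each block $f_iRf_j$ with $i<j$ is one-dimensional, hence homogeneous, and I would choose homogeneous generators $u_{ij}$ normalized to satisfy the matrix-unit relations. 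The assignment $f_i\mapsto e_{ii}$, $u_{ij}\mapsto e_{ij}$ then gives a graded isomorphism of $R$ onto $UT_n(F)$ with the elementary grading defined by $(g_1,\dots,g_n)$, where $g_1=e$ and $g_i=\deg(u_{1i})$; the verification that $\deg(u_{ij})=g_i^{-1}g_j$ is routine from $u_{1i}u_{ij}=u_{1j}$.

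To manufacture the homogeneous idempotents I would first use that the Jacobson radical $J=J(R)$, the strictly upper triangular matrices, is a graded ideal (the standard fact that the radical of a finite-dimensional graded algebra is homogeneous), so that all powers $J^k$ and the quotient $R/J\cong F^{\,n}$ inherit $G$-gradings. The key technical device is that a one-sided annihilator of a graded subspace is again graded; applied to the powers $J^k$ this shows that $\text{row }1=\mathrm{r.ann}(J)$ and $\text{column }n=\mathrm{l.ann}(J)$ are graded, and that intersecting them with $\mathrm{l.ann}(J^{n-k+1})$, resp.\ $\mathrm{r.ann}(J^{k})$, produces graded flags
\[Fe_{1n}=F_n\subseteq\cdots\subseteq F_1=\text{row }1,\qquad G_1\subseteq\cdots\subseteq G_n=\text{column }n,\]
with $F_k=\mathrm{span}\{e_{1j}:j\ge k\}$ and $G_k=\mathrm{span}\{e_{in}:i\le k\}$. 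Since successive quotients are one-dimensional and graded, I obtain homogeneous elements $\tilde e_{1k}=e_{1k}+\sum_{j>k}\ast\,e_{1j}$ and $\tilde e_{kn}=e_{kn}+\sum_{i<k}\ast\,e_{in}$ (with unspecified scalars $\ast$), and in particular $e_{1n}$, spanning $F_n=G_1$, is itself homogeneous.

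The heart of the argument, and the step I expect to be the main obstacle, is to deduce from this that the grading on $R/J$ is \emph{trivial}, i.e.\ that every homogeneous element of nonneutral degree lies in $J$. For this I would exploit the extraction identity, valid for every $x=\sum_{i\le j}x_{ij}e_{ij}$,
\[\tilde e_{1k}\,x\,\tilde e_{kn}=x_{kk}\,e_{1n},\]
which holds exactly because the correction terms in $\tilde e_{1k}$ and $\tilde e_{kn}$ only meet entries strictly above the diagonal and drop out. If $x\in R_g$ is homogeneous, the left side lies in $R_{\deg(\tilde e_{1k})\,g\,\deg(\tilde e_{kn})}$ and the right side in $R_{\deg(e_{1n})}$; comparing with the case $x=1$, which gives $\tilde e_{1k}\tilde e_{kn}=e_{1n}$, and cancelling in $G$ forces $g=e$ whenever $x_{kk}\neq 0$. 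Hence a homogeneous element of degree $g\neq e$ has all diagonal entries zero, so $R/J$ carries only the neutral grading. Consequently the primitive idempotents of $R/J$ are homogeneous, and lifting this complete orthogonal system through the graded nilpotent ideal $J$ (which can be done inside $R_e$, as $R_e\twoheadrightarrow(R/J)_e=R/J$) yields the required $f_1,\dots,f_n\in R_e$, completing the reduction described in the first paragraph.
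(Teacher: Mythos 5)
Your proposal cannot be checked against an in-paper argument, because the paper does not prove this theorem: it quotes it from the cited work of Valenti and Zaicev. Judged on its own, most of your argument is correct and in the spirit of the cited proof: the reduction to a complete system of orthogonal primitive idempotents in $R_e$, the observation that a homogeneous idempotent has neutral degree, the graded flags built from one-sided annihilators, the extraction identity $\tilde e_{1k}\,x\,\tilde e_{kn}=x_{kk}e_{1n}$, and the cancellation $pgq=pq\Rightarrow g=e$ (which, nicely, does not need $G$ abelian) all check out. But every one of these steps rests on the assertion you make in passing at the start of your second paragraph: that $J=J(R)$ is a graded ideal, justified by ``the standard fact that the radical of a finite-dimensional graded algebra is homogeneous.'' There is no such fact at the level of generality of this theorem, which is stated for an \emph{arbitrary} group $G$ and an \emph{arbitrary} field $F$. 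A counterexample: the group algebra $F[\mathbb{Z}_2]=F[x]/(x^2-1)$ over a field of characteristic $2$, with its tautological $\mathbb{Z}_2$-grading, has Jacobson radical $F(x+1)$, which is not a graded subspace. Gradedness of the radical is a theorem only under extra hypotheses --- for instance characteristic zero with $G$ abelian, via the duality with $\hat G$-actions that this paper uses in Section 5 --- and proving it for $UT_n$ with arbitrary $G$ and $F$ is precisely the key technical lemma of the Valenti--Zaicev proof. In other words, you have black-boxed the hard part of the theorem behind an invalid citation; without it, your flags $F_k$, $G_k$ are not known to be graded and no homogeneous $\tilde e_{1k}$, $\tilde e_{kn}$ can be produced.

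The gap is repairable without any general radical theorem, at least for abelian $G$ (which is all the present paper ever uses): in $R=UT_n(F)$ one has $J=[R,R]$ as subspaces, since $R/J$ is commutative (so $[R,R]\subseteq J$) and $e_{ij}=[e_{ii},e_{ij}]$ for $i<j$ (so $J\subseteq[R,R]$); when $G$ is abelian, $[R_g,R_h]\subseteq R_{gh}$, so $[R,R]$ is a sum of subspaces of homogeneous components and hence graded --- valid over any field. For nonabelian $G$ this argument collapses, because $[x_g,y_h]$ then has two homogeneous components, of degrees $gh$ and $hg$, and neither is obviously in $J$; covering that case requires the genuine work of the cited paper. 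You should either add such an argument (and restrict the statement to abelian $G$), or acknowledge that this step is the content being cited. The remaining glossed points --- that a complete orthogonal system of primitive idempotents in $UT_n$ is conjugate to the diagonal one, so the Peirce blocks $f_iRf_j$ are at most one-dimensional and totally ordered, and the lifting of idempotents through the nil ideal $R_e\cap J$ inside $R_e$ --- are standard and acceptable as stated.
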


We now recall the description of involutions on $UT_n$. The most important involution on $UT_n$ is $\circ: UT_n\longrightarrow UT_n$ given by $A^\circ=SA^tS$, where $S=\sum_{i}e_{i,n+1-i}$.  It is not difficult to see that $A^\circ  \in UT_n$ and that $\circ$ is an involution on $UT_n$. Also one can easily see that $e_{ij}^\circ=e_{n+1-i,n+1-j}$, i.e., $A^\circ$ is the reflection along to the secondary diagonal of $A$. 

If an invertible matrix $T\in UT_n$ satisfies $T^\circ=\pm T$, then $A\mapsto T^{-1}A^\circ T$ is also an involution on $UT_n$. An important particular case is when $n=2m$ is even and $T=J=\begin{pmatrix}
I_m & 0\\
0 & -I_m\\
\end{pmatrix}$, where $I_m$ denotes the $m\times m$ identity matrix. Then we obtain the involution $s:UT_n\longrightarrow UT_n$ given by $A^s=JA^\circ J$, which is also important in the description of involutions on $UT_n$ as we can see in the next theorem.

\begin{proposition}\cite[Proposition 2.5]{DiVKoshLaS}
	Let $F$ be an arbitrary field of characteristic different from 2 and let $UT_n$ be the algebra of upper triangular matrices over $F$. Let $:UT_n\longrightarrow UT_n$ be an involution on $UT_n$. Then $(UT_n,*)$ as an algebra with involution is isomorphic either to $(UT_n,\circ)$ or to $(UT_n,s)$.	
\end{proposition}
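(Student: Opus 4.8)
The plan is to measure a general involution $*$ against the fixed one $\circ$ and show that the discrepancy is an inner automorphism. Since $\circ$ is an involution on $UT_n$, for any involution $*$ the composite $\psi(A)=(A^\circ)^*$ is an \emph{automorphism} of $UT_n$: indeed $\psi(AB)=((AB)^\circ)^*=(B^\circ A^\circ)^*=(A^\circ)^*(B^\circ)^*=\psi(A)\psi(B)$. It is classical that every algebra automorphism of $UT_n$ is inner (an automorphism preserves the arrows $1\to 2\to\cdots\to n$ of the quiver of $UT_n$, hence induces the identity permutation on the primitive idempotents $e_{11},\dots,e_{nn}$, and any two complete sets of orthogonal primitive idempotents are conjugate). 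Thus $\psi(A)=P^{-1}AP$ for some invertible $P\in UT_n$, and replacing $A$ by $A^\circ$ gives
\[ A^*=P^{-1}A^\circ P\qquad\text{for all }A\in UT_n. \]

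Next I would extract the constraints imposed by $*^2=\mathrm{id}$. A direct computation using $(XYZ)^\circ=Z^\circ Y^\circ X^\circ$ gives $A^{**}=P^{-1}P^\circ A (P^\circ)^{-1}P$, so $*^2=\mathrm{id}$ forces $P^{-1}P^\circ$ to be central, i.e. $P^\circ=\varepsilon P$ with $\varepsilon\in F^\times$; applying $\circ$ once more yields $\varepsilon^2=1$, so $\varepsilon=\pm1$ (here $\mathrm{char}\,F\neq 2$ makes the two values distinct). Since $\det P^\circ=(\det S)^2\det P=\det P$ while $P^\circ=-P$ would give $\det P^\circ=(-1)^n\det P$, invertibility of $P$ shows that $\varepsilon=-1$ can occur only when $n$ is even. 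The sign $\varepsilon$ is the essential invariant: it will distinguish $\circ$ (represented by $P=I$, with $I^\circ=I$) from $s$ (represented by $P=J$, with $J^\circ=-J$ and $n$ even).

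It remains to normalize $P$. Conjugating $*$ by the inner automorphism $A\mapsto Q^{-1}AQ$ (with $Q\in UT_n$ invertible) replaces $P$ by the congruent matrix $Q^\circ PQ$, and $P$ and $\lambda P$ ($\lambda\in F^\times$) define the same involution; moreover congruence and scaling preserve the type $P^\circ=\varepsilon P$. So the proposition reduces to the purely linear-algebraic claim that, up to a scalar, every invertible $\circ$-symmetric $P$ is congruent by some $Q\in UT_n$ to $I$, and every invertible $\circ$-skew $P$ (with $n$ even) is congruent to $J$. I would prove this by induction on $n$, peeling off the outermost anti-diagonal pair of indices $\{1,n\}$: the $\circ$-form pairs index $1$ with index $n$, this pairing is non-degenerate (as $P$ is invertible and $\circ$-symmetric or $\circ$-skew) and hyperbolic, so after scaling it can be set to $1$ (resp. to the sign dictating $J$); admissible upper-triangular congruences then clear the remaining entries of the first ``anti-row'' and ``anti-column'' by completing the square, and one recurses on the central $(n-2)\times(n-2)$ block.

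The step I expect to be the main obstacle is precisely this last normalization, because the congruences $Q$ must remain \emph{upper triangular}: general congruence of symmetric forms has many classes over an arbitrary field (discriminants in $F^\times/(F^\times)^2$), and the whole point is that the anti-diagonal pairing carried by $\circ$ is split, so it admits no field-dependent invariant and collapses to a single class for each sign $\varepsilon$. Making the inductive clearing respect upper-triangularity, and checking at each stage that the relevant pairing is non-degenerate so that the square-completion (which is where $\mathrm{char}\,F\neq 2$ is used) is available, is the delicate part; everything else is formal.
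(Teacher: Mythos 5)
Your proposal is correct, and it reaches the result by a genuinely different route in the decisive step. Your first two reductions coincide with the standard argument behind the cited result (and with what this paper does in the graded setting in Proposition \ref{iso}): the composite of $*$ with $\circ$ is an automorphism, automorphisms of $UT_n$ are inner, so $A^*=P^{-1}A^\circ P$ with $P^\circ=\pm P$, the sign $\varepsilon=\pm1$ being the invariant and $\varepsilon=-1$ forcing $n$ even. Where you diverge is the normalization of $P$ up to upper-triangular $\circ$-congruence and scalars. The paper (following \cite{DiVKoshLaS} and \cite{Valenti-Zaicev}) does this in one stroke via Lemma \ref{inv.utn}: every nonsingular $*$-symmetric $D\in UT_n$ factors as $D=CC^*$ with $C$ given by an \emph{explicit closed block formula} (the $\tfrac12 Y$ trick, which is also exactly where $\mathrm{char}\,F\neq 2$ enters), and the $\circ$-skew case is converted to the $s$-symmetric case by multiplying by $J$ before factoring, since $(JB)^s=JB$ when $B^\circ=-B$. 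You instead run an induction on hyperbolic pairs along the anti-diagonal, completing squares while keeping the congruences upper triangular; I checked the step you flag as delicate and it does go through: since $P$ is triangular with nonzero diagonal, the form $b$ with Gram matrix $SP$ satisfies $b(e_1,e_j)=0$ for $j<n$ and $b(e_1,e_n)\neq 0$, the inner block on $\mathrm{span}(e_2,\dots,e_{n-1})$ stays anti-triangular with nonzero anti-diagonal (hence non-degenerate), and all the clearing operations add only \emph{earlier} basis vectors, so they are upper triangular; the residual middle entry when $n$ is odd is absorbed by the global scalar freedom in $P$. What each approach buys: the explicit factorization is shorter and verification-free, essentially a formula; your argument is longer but explains conceptually \emph{why} there is only one class per sign over an arbitrary field of characteristic $\neq 2$ --- the $\circ$-pairing is split along the anti-diagonal flag, so no discriminant-type invariant in $F^\times/(F^\times)^2$ can survive --- and it does not require guessing the formula for $C$. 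If you wanted to shorten your write-up, you could replace your induction by a direct appeal to Lemma \ref{inv.utn}, noting that $P=CC^\circ$ is precisely the statement that $P$ is $\circ$-congruent to $I$.
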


The main result of \cite{Valenti-Zaicev} is the following theorem:

\begin{theorem}\cite[Theorem 5.4]{Valenti-Zaicev}
	Let $F$ be an algebraically closed field of characteristic zero and let $R=UT_n=\oplus_{g\in G} R_g$ be the algebra of $n\times n$ upper triangular matrices over $F$ graded by a finite abelian group $G$. Suppose $R$ is endowed with a a graded involution $*$. Then $R$, as a $G$-graded algebra with involution is isomorphic to $UT_n$ with an elementary $G$-grading defined by an $n$-tuple $\overline{g}=(g_1,\dots,g_n)$ such that $g_1g_n=g_2g_{n-1}=\cdots=g_ng_1$ and with the involution $\circ$ or $s$. The involution $s$ can occur only if $n$ is even.
\end{theorem}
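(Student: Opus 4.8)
The plan is to derive the statement by combining the grading classification of Theorem~\ref{gradingUTn} with a direct, grading-respecting normalization of the involution, cross-checked against the involution classification of \cite{DiVKoshLaS}. First I would use Theorem~\ref{gradingUTn} to replace $R$, up to graded isomorphism, by $UT_n$ carrying the elementary grading attached to some tuple $\overline g=(g_1,\dots,g_n)$, so that $\deg(e_{ij})=g_i^{-1}g_j$ for $i\le j$, and I would transport $*$ to a degree-preserving involution on this copy of $UT_n$.

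Next I would determine how $*$ acts on matrix units. Because $*$ is degree-preserving and each $e_{ii}$ is homogeneous of neutral degree $e$, the images $e_{ii}^*$ form a complete set of orthogonal primitive idempotents lying in the identity component $(UT_n)_e$. Since $*$ is an anti-automorphism it reverses the Peirce order $e_{ii}\,UT_n\,e_{jj}\neq 0\iff i\le j$, so the induced permutation of the primitive idempotents is the order-reversing one $i\mapsto n+1-i$. A conjugation by a unit of $(UT_n)_e$ --- which is homogeneous of degree $e$ and hence leaves the elementary grading and the tuple $\overline g$ unchanged --- lets me assume $e_{ii}^*=e_{n+1-i,n+1-i}$ for all $i$. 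Writing $e_{ij}=e_{ii}e_{ij}e_{jj}$ and applying $*$ then forces $e_{ij}^*=\lambda_{ij}\,e_{n+1-j,\,n+1-i}$ for some scalars $\lambda_{ij}\in F^\times$.

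Comparing degrees in $e_{ij}^*=\lambda_{ij}e_{n+1-j,n+1-i}$ gives $g_i^{-1}g_j=g_{n+1-j}^{-1}g_{n+1-i}$, hence $g_ig_{n+1-i}=g_jg_{n+1-j}$ for all $i,j$ as $G$ is abelian; this is exactly the asserted condition $g_1g_n=\cdots=g_ng_1$. To normalize the scalars, anti-multiplicativity on $e_{ij}e_{jk}=e_{ik}$ gives $\lambda_{ik}=\lambda_{ij}\lambda_{jk}$, so $\lambda_{ij}=\lambda_{1j}\lambda_{1i}^{-1}$, while $(e_{ij}^*)^*=e_{ij}$ gives $\lambda_{ij}\lambda_{n+1-j,\,n+1-i}=1$; together these force $\lambda_{1,n+1-i}=\lambda_{1i}\lambda_{1n}$ and $\lambda_{1n}^2=1$. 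A diagonal conjugation by $D=\diag(d_1,\dots,d_n)$ --- homogeneous of degree $e$, hence grading-preserving --- multiplies $\lambda_{1i}$ by $d_1d_n(d_id_{n+1-i})^{-1}$ and leaves $\lambda_{1n}$ fixed, so $\lambda_{1n}\in\{1,-1\}$ is a genuine invariant. If $\lambda_{1n}=1$ I can solve $d_id_{n+1-i}=\lambda_{1i}d_1d_n$ (using that $F$ is algebraically closed for the middle square root when $n$ is odd), trivializing all $\lambda_{ij}$ and obtaining the involution $\circ$. If $\lambda_{1n}=-1$ this is impossible, and moreover when $n$ is odd the fixed index $i_0=(n+1)/2$ gives $\lambda_{1i_0}=\lambda_{1,n+1-i_0}=\lambda_{1i_0}\lambda_{1n}=-\lambda_{1i_0}$, i.e. $\lambda_{1i_0}=0$, a contradiction; thus $n$ is even and the analogous normalization yields the involution $s$ with $J=\diag(I_m,-I_m)$, $n=2m$. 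This produces exactly $\circ$ or $s$ and shows that $s$ occurs only for even $n$.

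The hard part will be the middle step together with the bookkeeping of signs. One must justify that the complete set $\{e_{ii}^*\}$ is conjugate to $\{e_{n+1-i,n+1-i}\}$ \emph{inside} the degree-$e$ component, so that the standardizing conjugation is grading-preserving; this is where the restriction to homogeneous (diagonal) conjugations --- rather than the general automorphism of $UT_n$ furnished by \cite{DiVKoshLaS} --- really matters, since only such conjugations are guaranteed to keep the grading elementary with the same tuple. After that, the delicate point is to verify that the solvability of $d_id_{n+1-i}=\lambda_{1i}d_1d_n$, constrained by the involution relation, isolates the single invariant $\lambda_{1n}=\pm1$ and correctly matches the value $-1$ with the parity obstruction that simultaneously forces $n$ even and selects the matrix $J$ defining $s$.
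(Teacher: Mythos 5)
Your proposal is correct, but it takes a genuinely different route from the paper's. The paper quotes this theorem from \cite{Valenti-Zaicev} and reproduces its proof machinery in Section 5: every involution on $UT_n$ has the form $X^*=B^{-1}X^\circ B$ with $B^\circ=\pm B$ by \cite{DiVKoshLaS}, the duality with $\hat G$-actions (as in Proposition \ref{lambda}) shows $B$ is homogeneous of degree $e$ (the analogue of Proposition \ref{homogeneous}), and the factorization $D=CC^*$ of Lemma \ref{inv.utn} then produces the graded isomorphism with involution onto $(UT_n,\circ)$ or $(UT_n,s)$ (Proposition \ref{iso}), after which the degree condition follows as in Proposition \ref{circ}. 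You instead argue directly on matrix units: after Theorem \ref{gradingUTn}, the idempotents $e_{ii}^*$ form a complete orthogonal set of primitive idempotents inside $(UT_n)_e$, a unit of $(UT_n)_e$ conjugates them to the diagonal ones, the Peirce relations force the order-reversing matching $i\mapsto n+1-i$, and the cocycle identity $\lambda_{ik}=\lambda_{ij}\lambda_{jk}$ together with $\lambda_{ij}\lambda_{n+1-j,\,n+1-i}=1$ and diagonal rescaling reduce everything to the single invariant $\lambda_{1n}=\pm 1$, which selects $\circ$ or $s$ and, via the middle-index contradiction, forces $n$ even when it equals $-1$; all of these computations check out, including the degree condition $g_ig_{n+1-i}=g_jg_{n+1-j}$. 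The step you rightly flag as delicate --- realizing the conjugation inside the identity component --- is fillable: for an elementary grading, $(UT_n)_e$ is a direct product of smaller upper triangular algebras, hence a semiperfect algebra in which any two complete sets of orthogonal primitive idempotents are conjugate by a unit, and the induced permutation is then pinned down by the Peirce argument in the ambient $UT_n$. As for what each approach buys: the paper's argument is short modulo the cited classification results and recycles essentially verbatim for the degree-inverting case that is the actual subject of the paper, whereas yours is self-contained and elementary, avoiding both the character duality and the $CC^*$ factorization, and it makes the dichotomy $\circ$ versus $s$ and the parity constraint transparent through one scalar invariant, using algebraic closure only for a single square root when $n$ is odd.
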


In a similar way in this section we give a description of degree-inverting involutions of $UT_n$. The methods used here are similar to those in \cite{Valenti-Zaicev}. In particular, we use the duality between $G$-gradings and $\hat G$-actions, where $\hat G$ is the dual group of $G$. For more details, see \cite{Giambruno-Zaicev}.

Let $G$ be a finite abelian group and let $\hat G$ be its dual group, i.e., the set of all irreducible characters $\lambda:G\longrightarrow F^*$ with pointwise multiplication, i.e., if $\lambda, \chi\in \hat G$, then  $(\lambda\chi)(g)=\lambda(g)\chi(g)$, for all $g\in G$. It is well known that $\hat G \cong G$ and the notions of $G$-grading and $\hat G$-action are equivalent in the following way. 

Suppose $R=\oplus_{g\in G} R_g$ is a $G$-grading on the algebra $R$. If $\lambda\in \hat G$, we define the map (also denote by $\lambda$) \[\begin{array}{cccc}\lambda: & R & \longrightarrow & R\\
											& \sum_{g\in G} a_g & \longmapsto & \sum_{g\in G}\lambda(g)a_g\\ \end{array}\]

One can easily see that the map $\lambda$ defined above is an automorphism of $R$. Conversely, if $\hat G$ acts on $R$ by automorphisms, by setting \[R_g=\{a\in R\,|\, \lambda(a)=\lambda(g)a, \forall \lambda \in \hat G\}\]
one obtains that $R=\oplus_{g\in G} R_g$ is a $G$-grading on $R$.

Now let $R=\oplus{g\in G}$ be an elementary $G$-grading on $R=UT_n$ defined by an $n$-tuple $\overline{g}=(g_1,\dots,g_n)\in G^n$. If $\lambda \in \hat G$, let $T_\lambda=\diag\{\lambda(g_1),\dots,\lambda(g_n)\}$. Then one can easily verify that for all $\lambda \in \hat G$, $\lambda (X)=T_{\lambda}^{-1}XT_\lambda$, for all $X\in UT_n$. 

With the above notation we have the following fact. 

\begin{proposition}\label{lambda}
	Let $R=\oplus_{g\in G} R_g$ be a finite dimensional algebra over an algebraically closed field of characteristic zero graded by a finite abelian group $G$.  Then $*:R\longrightarrow R$ is a degree-inverting involution on $R$ if and only if for all $g\in G$ and $a\in R_g$,\[(\lambda(a))^*=\lambda(g)^2\lambda(a^*).\]
\end{proposition}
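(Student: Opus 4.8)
The plan is to prove both implications by a direct computation, exploiting two facts about the $\hat G$-action recalled immediately before the statement. The first is that a homogeneous element $a\in R_g$ is an eigenvector for every character: $\lambda(a)=\lambda(g)a$ for all $\lambda\in\hat G$. The second is the duality characterization $R_{g^{-1}}=\{b\in R\mid \lambda(b)=\lambda(g^{-1})b \text{ for all }\lambda\in\hat G\}$, where $\lambda(g^{-1})=\lambda(g)^{-1}$ because $\lambda$ is a group homomorphism into $F^{*}$. The only other ingredient is that $*$ is $F$-linear, so it commutes with multiplication by the scalar $\lambda(g)$; throughout I treat $*$ as a fixed involution, the identity being what characterizes the degree-inverting property.

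First I would prove the forward direction. Assume $*$ is degree-inverting and fix $g\in G$, $a\in R_g$, and $\lambda\in\hat G$; then $a^{*}\in R_{g^{-1}}$ by hypothesis. Using $\lambda(a)=\lambda(g)a$ and the $F$-linearity of $*$, the left-hand side becomes $(\lambda(a))^{*}=(\lambda(g)a)^{*}=\lambda(g)a^{*}$. For the right-hand side, since $a^{*}\in R_{g^{-1}}$ we have $\lambda(a^{*})=\lambda(g^{-1})a^{*}=\lambda(g)^{-1}a^{*}$, so that $\lambda(g)^{2}\lambda(a^{*})=\lambda(g)a^{*}$. The two sides coincide, which is the claimed identity.

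Then I would prove the converse. Assume the identity holds for all $g$, all $a\in R_g$, and all $\lambda\in\hat G$; I must show $(R_g)^{*}\subseteq R_{g^{-1}}$. Fixing $a\in R_g$, the left-hand side again equals $\lambda(g)a^{*}$, so the identity reads $\lambda(g)a^{*}=\lambda(g)^{2}\lambda(a^{*})$. Dividing by the nonzero scalar $\lambda(g)\in F^{*}$ yields $\lambda(a^{*})=\lambda(g)^{-1}a^{*}=\lambda(g^{-1})a^{*}$, and this holds for every $\lambda\in\hat G$. By the duality characterization of the homogeneous component, this is precisely the condition for $a^{*}$ to lie in $R_{g^{-1}}$, whence $(R_g)^{*}\subseteq R_{g^{-1}}$ and $*$ is degree-inverting.

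The computation is short, so I do not anticipate a genuine obstacle; the one point demanding care is that the displayed identity must be read as holding for all $\lambda\in\hat G$ simultaneously. The forward direction uses it only for each fixed $\lambda$, but the converse crucially needs the universal quantifier over $\hat G$: it is exactly the full system of eigenvalue equations $\lambda(a^{*})=\lambda(g^{-1})a^{*}$, ranging over all $\lambda$, that pins down the component $R_{g^{-1}}$ through the grading/action duality. I would also emphasize that the argument relies on $F$-linearity rather than any conjugate-linearity of $*$, which is what lets the scalar $\lambda(g)$ factor cleanly through the involution.
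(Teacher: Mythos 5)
Your proposal is correct and follows essentially the same route as the paper's own proof: both directions are the same direct computation using $\lambda(a)=\lambda(g)a$, the $F$-linearity of $*$, and (for the converse) the recovery of $R_{g^{-1}}$ from the full system of eigenvalue equations over all $\lambda\in\hat G$. If anything, your explicit remark that the converse requires the quantifier over all of $\hat G$ makes precise a point the paper's proof leaves implicit.
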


\begin{proof}
	The involution $*$ is degree-inverting if and only if for all $g\in G$ and $a\in R_g$, $a^*\in R_{g^{-1}}$. So let $a\in R_g$. 
	
	If $*$ is degree-inverting, then $\lambda (a)= \lambda(g)a$ and $\lambda(a^*)=\lambda(g^{-1})a^*$. By applying $*$ in the first equation and comparing with the second, we obtain $\lambda(a)^*=\lambda(g)^2\lambda(a^*)$.
	
	Conversely, if the above equation holds, we obtain that $\lambda(a^*)=\lambda(g^{-1})a^*$, i.e., $a^*\in R_{g^{-1}}$.
\end{proof}

\begin{lemma}
	Let $B=\sum_{i\leq j}b_{ij}e_{ij}\in UT_n$ be an invertible matrix and let $D=\sum d_ie_{ii}\in UT_n$ be an invertible diagonal matrix. If $B^{-1}DB$ is a diagonal matrix, then $b_{ij}=0$ for all $(i,j)$ such that $d_i\neq d_j$.
\end{lemma}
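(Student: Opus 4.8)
The plan is to translate the hypothesis that $B^{-1}DB$ is diagonal into an entrywise identity and then read off the vanishing of the off-diagonal entries directly. I would write $\Lambda=B^{-1}DB=\diag(\lambda_1,\dots,\lambda_n)$, so that the hypothesis becomes the matrix equation $DB=B\Lambda$, which is easier to exploit than the conjugated form because it puts the two diagonal matrices $D$ and $\Lambda$ on the outside.

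First I would compute both sides entrywise. Since $D$ is diagonal, the $(i,j)$-entry of $DB$ is $d_ib_{ij}$; since $\Lambda$ is diagonal, the $(i,j)$-entry of $B\Lambda$ is $\lambda_jb_{ij}$. Equating these gives $d_ib_{ij}=\lambda_jb_{ij}$, that is, $(d_i-\lambda_j)b_{ij}=0$ for every pair $(i,j)$. At this stage the lemma is reduced to identifying the unknown scalars $\lambda_j$ with the given $d_j$.

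Next I would carry out that identification using the diagonal entries. Because $B\in UT_n$ is invertible, its determinant $b_{11}\cdots b_{nn}$ is nonzero, so each diagonal entry $b_{ii}$ is nonzero. Taking $i=j$ in the identity $(d_i-\lambda_j)b_{ij}=0$ yields $(d_i-\lambda_i)b_{ii}=0$, and cancelling $b_{ii}\neq 0$ gives $\lambda_i=d_i$ for all $i$. Substituting $\lambda_j=d_j$ back into $(d_i-\lambda_j)b_{ij}=0$ then gives $(d_i-d_j)b_{ij}=0$ for all $(i,j)$, whence $b_{ij}=0$ whenever $d_i\neq d_j$, as claimed.

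I do not expect a genuine obstacle here: the computation is elementary and the argument is essentially forced. The one place where the upper-triangular hypothesis is actually used is the nonvanishing of the diagonal entries of $B$, and this is precisely what guarantees that the diagonal of $B^{-1}DB$ reproduces the diagonal of $D$ (rather than some permutation of it, as would be possible for a general invertible $B$). It is worth noting that the same proof works verbatim for any invertible $B$ whose diagonal entries are all nonzero, so the upper-triangularity is used only through that weaker consequence.
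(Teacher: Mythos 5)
Your proof is correct and follows essentially the same route as the paper's: both rewrite the conjugation hypothesis as a matrix equation with the diagonal matrices on the outside ($DB=B\Lambda$ in your version, $B=D^{-1}BD'$ in the paper's), compare entries, use the nonvanishing of the diagonal entries of an invertible upper triangular matrix to identify $\lambda_i=d_i$, and then read off $(d_i-d_j)b_{ij}=0$. The only addition is your closing remark that upper-triangularity is used solely through $b_{ii}\neq 0$, which is a fair observation but does not change the argument.
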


\begin{proof}
	Write $B=(b_{ij})$ and let $B^{-1}DB=D'$ be a diagonal matrix. Then $B=D^{-1}B D'$. Write $D=\sum_{i=1}^n d_ie_{ii}$ and $D'=\sum_{i=1}^n d_i'e_{ii}$. The above matrix equation yields for all $i\leq j$, \[b_{ij}e_{ij}=d_i^{-1}d_j'b_{ij}e_{ij}\]
	Since $B$ is invertible, $b_{ii}\neq 0$, for all $i$. Put $i=j$ to get $d_i=d_i'$, for all $i$ and for all $i\leq j$,  \[b_{ij}=d_i^{-1}d_jb_{ij}\]
	From what we deduce that $d_i\neq d_j$ implies that $b_{ij}=0$.
	
\end{proof}

\begin{proposition}\label{homogeneous}
	Suppose $*$ is a degree-inverting involution on $UT_n$ and $B\in UT_n$ is such that $X^*=B^{-1}X^\circ B$, for all $X\in UT_n$. Then $B$ is homogeneous of degree $0$. 
\end{proposition}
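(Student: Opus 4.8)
The plan is to prove the equivalent statement that $b_{rs}=0$ whenever $g_r\neq g_s$, where $B=(b_{rs})$ and $(g_1,\dots,g_n)$ is the tuple defining the elementary grading. Indeed, since the matrix unit $e_{rs}$ is homogeneous of degree $g_r^{-1}g_s$, the element $B$ lies in the identity component $R_0$ exactly when all its nonzero entries sit in positions $(r,s)$ with $g_r=g_s$. The only facts I intend to use are the explicit form $X^{*}=B^{-1}X^{\circ}B$, the fact that $B$ is upper triangular and invertible (so that the diagonal entries of both $B$ and $B^{-1}$ are nonzero), and that $*$ inverts degrees. The guiding idea is to feed the degree-inverting condition the cheapest homogeneous inputs available, namely the diagonal matrix units $e_{ii}$, each of which is homogeneous of degree $0$.

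Concretely, I would first note that $\circ$ reflects along the secondary diagonal, so $e_{ii}^{\circ}=e_{n+1-i,\,n+1-i}$ and hence $e_{ii}^{*}=B^{-1}e_{n+1-i,\,n+1-i}B$. Since $e_{ii}\in R_0$ and $*$ inverts degrees, we get $e_{ii}^{*}\in R_0$. Setting $p=n+1-i$, the next step is to isolate a single entry of the conjugate: a direct computation gives $(B^{-1}e_{pp}B)_{pb}=(B^{-1})_{pp}\,b_{pb}$, which is a nonzero scalar multiple of $b_{pb}$ because $(B^{-1})_{pp}=b_{pp}^{-1}\neq 0$. Membership in $R_0$ says precisely that every entry in a position $(a,b)$ with $g_a\neq g_b$ must vanish; reading this off in position $(p,b)$ forces $b_{pb}=0$ whenever $g_p\neq g_b$. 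As $i$ ranges over $\{1,\dots,n\}$ the index $p=n+1-i$ runs over all rows, so $b_{rs}=0$ whenever $g_r\neq g_s$, which is the desired conclusion $B\in R_0$.

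The step that requires care — and where a careless argument breaks down — is that $R_0$ is in general strictly larger than the algebra of diagonal matrices: as soon as two of the $g_i$ coincide, $R_0$ already contains off-diagonal matrix units. Hence one must not expect a conjugate such as $B^{-1}e_{pp}B$ (or $B^{-1}T_\lambda B$ for $T_\lambda=\diag\{\lambda(g_1),\dots,\lambda(g_n)\}$) to be literally diagonal, and the diagonalization lemma preceding this statement cannot be invoked for a putative diagonal conjugate; this is why I argue at the level of individual matrix entries, the invertibility of $B$ being exactly what lets me cancel $(B^{-1})_{pp}$ and read off $b_{pb}$. Finally, I would remark that running the same computation on a general $e_{ij}$ (with $i\le j$) and using that the $(n+1-j,\,n+1-i)$ entry of $B^{-1}e_{ij}^{\circ}B$ is nonzero yields, as a by-product, that $g_{n+1-i}g_i^{-1}$ is independent of $i$; equivalently $\circ$ is itself degree-inverting for this grading, a compatibility that the mere existence of such a $B$ silently imposes.
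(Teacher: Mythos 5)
Your proof is correct, and it takes a genuinely different, more elementary route than the paper's. The paper argues through the duality between $G$-gradings and $\hat G$-actions: it applies Proposition \ref{lambda}, specializes the resulting identity at $g=e$ to all diagonal matrices (which lie in the identity component), concludes by a centralizer argument that $BT_\lambda B^{-1}T_\lambda^{\circ}$ commutes with every diagonal matrix and hence that $BT_\lambda B^{-1}$ is diagonal for every $\lambda\in\hat G$, and then combines the diagonalization lemma preceding the proposition with the fact that characters of a finite abelian group separate points ($g_i\neq g_j$ implies $\lambda(g_i)\neq\lambda(g_j)$ for some $\lambda$). You replace all of this with a single entry-level computation: since $e_{ii}\in R_0$ and $*$ inverts degrees, $e_{ii}^{*}=B^{-1}e_{pp}B$ (with $p=n+1-i$) lies in $R_0$, so its $(p,b)$ entry $(B^{-1})_{pp}b_{pb}=b_{pp}^{-1}b_{pb}$ must vanish whenever $g_p\neq g_b$, and letting $i$ range over $\{1,\dots,n\}$ sweeps every row of $B$; the computation $(B^{-1}e_{pp}B)_{pb}=(B^{-1})_{pp}b_{pb}$ is correct, and your caution that $R_0$ is in general strictly larger than the diagonal algebra is well placed (the paper's centralizer step is immune to this pitfall only because it quantifies over all diagonal matrices simultaneously). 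What each approach buys: yours is shorter, needs neither Proposition \ref{lambda}, nor the diagonalization lemma, nor any character theory, and in fact works for an arbitrary group and arbitrary field once the grading is elementary (which Theorem \ref{gradingUTn} supplies in full generality); the paper's argument is heavier but stays inside the $\hat G$-action framework set up for the whole section, following the Valenti--Zaicev method for graded involutions. One presentational point: like the paper, you implicitly normalize the grading to be the elementary one defined by an $n$-tuple $(g_1,\dots,g_n)$ --- this is legitimate by Theorem \ref{gradingUTn} and is the standing assumption in the paragraphs before Proposition \ref{lambda}, but it should be stated. Your closing remark, that running the same computation on a general $e_{ij}$ forces $g_{n+1-i}g_i^{-1}$ to be independent of $i$, is also correct and recovers Proposition \ref{circ} and Corollary \ref{iffcirc} as by-products.
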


\begin{proof}
	By Proposition \ref{lambda}, we know that $*$ is a degree-inverting involution if and only if for all $g\in G$, $X\in R_g$ and $\lambda \in \hat G$, we have $\lambda(X)^*=\lambda(g)^2\lambda(X^*)$.
	The latter is equivalent to 
	\[(T_\lambda^{\circ -1}B)^{-1}X^\circ T_\lambda^{\circ -1}B=(BT_\lambda)^{-1}\lambda(g)^2 X (BT_\lambda)\]
	In particular, for $g=e$, $\lambda(g)=0$. Since any diagonal matrix lies in $R_e$ and any diagonal matrix is of the form $D^\circ$ for some diagonal matrix $D$, it follows that 
	\[(T_\lambda^{\circ -1}B)^{-1}D T_\lambda^{\circ -1}B=(BT_\lambda)^{-1}\lambda(g)^2 D (BT_\lambda)\]
	for all diagonal matrix $D$. In particular, for all diagonal matrix  $D$,
	\[BT_\lambda B^{-1}T_\lambda^\circ D= D BT_\lambda B^{-1}T_\lambda^\circ\]
	i.e., $BT_\lambda B^{-1}T_\lambda^\circ$ lies in the centralizer of the diagonal matrices in $UT_n$, which is the set of diagonal matrices. Hence, there exists $\tilde D$ diagonal such hat $BT_\lambda B^{-1}T_\lambda^\circ=\tilde D$ and then \[BT_\lambda B^{-1}=\tilde D T_\lambda^{\circ -1}\] which is still a diagonal matrix, since $T_\lambda^{\circ -1}$ is so. Since the above equation holds for all $\lambda\in \hat G$, if $g_i\neq g_j$, there exist $\lambda \in \hat G$ such that $\lambda(g_i)\neq \lambda(g_j)$. Then by the previous lemma, $b_{ij}=0$ and we conclude that $B$ is homogeneous of degree $e$.
\end{proof}

\begin{corollary}\label{iffcirc}
	The involution $*$ is degree-inverting on $UT_n$ if and only if the involution $\circ$ is so.
\end{corollary}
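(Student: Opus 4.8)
The plan is to transfer the degree-inverting property between an arbitrary involution and the canonical one $\circ$, with Proposition \ref{homogeneous} supplying the essential input. First I would recall that, by the classification of involutions on $UT_n$ (\cite{DiVKoshLaS}) together with the innerness of automorphisms of $UT_n$, every involution $*$ on $UT_n$ can be written in the normal form $X^*=B^{-1}X^\circ B$ for some invertible $B\in UT_n$ (for the two model involutions $\circ$ and $s$ one has $B=I$ and $B=J$ respectively, and a general involution isomorphic to one of these is a conjugate of this shape). This places us exactly in the hypotheses of Proposition \ref{homogeneous}.

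For the substantive direction, suppose $*$ is degree-inverting. Proposition \ref{homogeneous} then forces $B$ to be homogeneous of degree $e$; consequently $B^{-1}$ is homogeneous of degree $e$ as well (the inverse of a homogeneous unit of degree $e$ is again of degree $e$), and the inner automorphism $\psi\colon X\mapsto BXB^{-1}$ is graded: if $Y\in R_h$ then $\deg(BYB^{-1})=e\,h\,e=h$, so $\psi(R_h)=R_h$ for all $h$. Now take any homogeneous $X\in R_g$. Degree-inversion of $*$ gives $X^*\in R_{g^{-1}}$, and applying $\psi$ yields $X^\circ=BX^*B^{-1}=\psi(X^*)\in R_{g^{-1}}$. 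Since $g$ and $X$ were arbitrary, $(R_g)^\circ\subseteq R_{g^{-1}}$ for every $g$, i.e. $\circ$ is degree-inverting. Conversely, $\circ$ is itself an involution on $UT_n$, so if it is degree-inverting there is nothing further to prove; phrased more symmetrically, the existence of a degree-inverting involution on the graded algebra $UT_n$ is equivalent to $\circ$ being degree-inverting.

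I expect the heart of the argument to be entirely contained in Proposition \ref{homogeneous}: once $B$ is known to be homogeneous of degree $e$, the remainder is just the elementary fact that conjugation by a degree-$e$ unit preserves degrees. The one point I would check carefully is the reduction to the normal form $X^*=B^{-1}X^\circ B$ with $B$ genuinely invertible inside $UT_n$, so that Proposition \ref{homogeneous} applies without modification; this is precisely where the classification of involutions on $UT_n$ and the innerness of its automorphisms enter. The payoff, which I would record immediately, is that the classification of degree-inverting involutions on $UT_n$ now reduces to determining exactly which elementary $G$-gradings render the single involution $\circ$ degree-inverting.
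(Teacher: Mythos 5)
Your proof is correct and takes essentially the same route as the paper: the paper states this corollary without proof as an immediate consequence of Proposition \ref{homogeneous}, namely writing $X^*=B^{-1}X^\circ B$, concluding that $B$ is homogeneous of degree $e$, and observing that conjugation by a degree-$e$ unit preserves degrees, so $X^\circ=BX^*B^{-1}$ inverts degrees whenever $*$ does. Your reading of the converse as an existence statement (rather than a claim about an arbitrary fixed $*$, for which it would fail when $B$ is not homogeneous) is also the reading the paper intends, as its subsequent use of the corollary shows.
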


By the above corollary, it is enough to analyze when $\circ$ is a degree inverting involution.

\begin{proposition}\label{circ}
	Let $UT_n=R=\oplus_{g\in G} R_g$ with an elementary grading defined by $\overline{g}=(g_1,\dots,g_n)$. Then $\circ:R\longrightarrow R$ is a degree inverting involution on $R$ if and only if $g_1^{-1}g_n=g_2^{-1}g_{n-1}=\cdots=g_n^{-1}g_1$.
\end{proposition}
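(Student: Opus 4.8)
The plan is to check the degree-inverting property on the homogeneous basis of $UT_n$, namely the matrix units $e_{ij}$ with $i\le j$, and then read off the claimed chain of equalities using that $G$ is abelian. Recall that in the elementary grading one has $\deg(e_{ij})=g_i^{-1}g_j$, and that $\circ$ acts as reflection along the secondary diagonal, so $e_{ij}^\circ=e_{n+1-j,\,n+1-i}$; note that $i\le j$ forces $n+1-j\le n+1-i$, so $\circ$ does send upper-triangular units to upper-triangular units. Since $\circ$ is linear and permutes the matrix units, the containment $R_g^\circ\subseteq R_{g^{-1}}$ for all $g$ holds if and only if each single unit satisfies $\deg(e_{ij}^\circ)=\deg(e_{ij})^{-1}$ for every $i\le j$.

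Next I would compute the two degrees explicitly: $\deg(e_{ij}^\circ)=\deg(e_{n+1-j,\,n+1-i})=g_{n+1-j}^{-1}g_{n+1-i}$, while $\deg(e_{ij})^{-1}=(g_i^{-1}g_j)^{-1}=g_j^{-1}g_i$. Thus $\circ$ is degree-inverting if and only if $g_{n+1-j}^{-1}g_{n+1-i}=g_j^{-1}g_i$ for all $i\le j$. Using commutativity of $G$, I would rearrange this identity (multiply through by $g_{n+1-j}$ and by $g_i^{-1}$) into the symmetric form $g_i^{-1}g_{n+1-i}=g_j^{-1}g_{n+1-j}$, again for all $i\le j$.

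Finally, writing $h_k:=g_k^{-1}g_{n+1-k}$, the last identity says exactly that $h_i=h_j$ whenever $i\le j$; fixing $i=1$ and letting $j$ vary shows this is equivalent to $h_1=h_2=\cdots=h_n$, which is precisely the asserted condition $g_1^{-1}g_n=g_2^{-1}g_{n-1}=\cdots=g_n^{-1}g_1$. I do not expect a genuine obstacle here, as the argument is a direct computation. The only points demanding care are using the correct index-reversal for $\circ$ (so that the reflected unit stays upper triangular and its degree is computed correctly) and observing that the a priori large family of relations indexed by pairs $i\le j$ collapses to the single stated chain of equalities.
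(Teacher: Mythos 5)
Your proposal is correct and follows essentially the same route as the paper's own proof: reduce to the matrix units, compute $e_{ij}^\circ=e_{n+1-j,\,n+1-i}$ and its degree, and rearrange $g_{n+1-j}^{-1}g_{n+1-i}=g_j^{-1}g_i$ (using that $G$ is abelian) into the constant-value condition $g_i^{-1}g_{n+1-i}=g_j^{-1}g_{n+1-j}$. The only difference is that you spell out the reduction to matrix units and the final collapse to a single chain of equalities, which the paper leaves implicit.
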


\begin{proof}
	Of course $e_{ij}^\circ =e_{n+1-j,n+1-i}$. Hence $\deg(e_{ij}^\circ)=g_{n+1-j}^{-1}g_{n+1-i}$. Since $\deg(e_{ij})=g_i^{-1}g_j$, the involution $*$ is degree-inverting if and only if for all $i\leq j$, we have  $g_{n+1-j}^{-1}g_{n+1-i}=g_j^{-1}g_i$. The later is equivalent to $g_i^{-1}g_{n+1-i}=g_j^{-1}g_{n+1-j}$ for all $i\leq j$ which proves the proposition.
\end{proof}

We need the following result of \cite{DiVKoshLaS}.

\begin{lemma}\cite[Lemma 2.4]{DiVKoshLaS}\label{inv.utn}
	Let $D$ be a non-singular matrix of $UT_n$ and $D^*=D$ where $*=\circ$ or $s$.
	\begin{enumerate}
		\item If $n=2m$ and 
		$D=\begin{pmatrix}
		X & Y \\
		0 & Z
		\end{pmatrix}$, where $X,Z\in UT_m(F)$ and $Y\in M_m(F)$, then $D$ can be decomposed as $D=CC^*$, where 
		$C=\begin{pmatrix}
		E_m & \frac{1}{2}Y\\
		0 & Z\\
		\end{pmatrix}$ with $E_m$ the $m\times m$ identity matrix.
		\item If $n=2m+1$ and $D=\begin{pmatrix}
		X & a & Y \\
		0 & 1 & b \\
		0 & 0 & Z 
		\end{pmatrix}$ with $X,Z\in UT_m(F)$, $Y\in M_m(F)$, $a\in M_{m\times 1}(F)$ and $b\in M_{1\times m}(F)$, then $*=\circ$ and $D$ can be decomposed as $D=CC^*$ where $C=\begin{pmatrix}
		E_m & 0 & \frac{1}{2} Y\\
		0 & 1 & b\\
		0 & 0 & Z
		\end{pmatrix}$.
	\end{enumerate}
\end{lemma}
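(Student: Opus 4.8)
The plan is to reduce everything to a direct block computation, after first recording how the involutions $\circ$ and $s$ act on matrices written in the block form of the statement. For $n=2m$, the reflection $S=\sum_i e_{i,n+1-i}$ has block form $S=\left(\begin{smallmatrix} 0 & S_m \\ S_m & 0\end{smallmatrix}\right)$, where $S_m=\sum_{i=1}^m e_{i,m+1-i}$ satisfies $S_m^2=E_m$. Writing $W^\diamond:=S_mW^tS_m$ for the anti-transpose of an $m\times m$ matrix $W$ (which restricts to the $m\times m$ involution $\circ$ on $UT_m$), a short computation gives
\[
\begin{pmatrix} X & Y \\ 0 & Z \end{pmatrix}^\circ
=\begin{pmatrix} Z^\diamond & Y^\diamond \\ 0 & X^\diamond \end{pmatrix},
\]
and conjugating by $J$ yields $\left(\begin{smallmatrix} X & Y \\ 0 & Z\end{smallmatrix}\right)^s=\left(\begin{smallmatrix} Z^\diamond & -Y^\diamond \\ 0 & X^\diamond\end{smallmatrix}\right)$. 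Hence $D^*=D$ translates into the block conditions $X=Z^\diamond$ together with $Y^\diamond=Y$ when $*=\circ$, and $Y^\diamond=-Y$ when $*=s$.

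First I would treat part (1). Using the formulas above and $E_m^\diamond=E_m$, I compute $C^*$ for $C=\left(\begin{smallmatrix} E_m & \frac12 Y \\ 0 & Z\end{smallmatrix}\right)$. In both cases the symmetry condition on $Y$ makes the off-diagonal block collapse to $\tfrac12 Y$ (for $*=s$ the sign from $J$ cancels the sign in $Y^\diamond=-Y$), so that $C^*=\left(\begin{smallmatrix} Z^\diamond & \frac12 Y \\ 0 & E_m\end{smallmatrix}\right)$ regardless of whether $*=\circ$ or $*=s$. Multiplying out $CC^*$ then gives upper-left block $Z^\diamond=X$, lower-right block $Z$, and off-diagonal block $\tfrac12 Y+\tfrac12 Y=Y$, which is exactly $D$. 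Invertibility and upper-triangularity of $C$ are automatic, since $E_m$ and $Z$ are invertible (the latter being a diagonal block of the invertible matrix $D$).

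For part (2), with $n=2m+1$, only $*=\circ$ can occur because $s$ is defined solely in even dimension. Here $S$ acquires the block form $\left(\begin{smallmatrix} 0 & 0 & S_m \\ 0 & 1 & 0 \\ S_m & 0 & 0\end{smallmatrix}\right)$ relative to the $(m,1,m)$-decomposition, and the same style of computation gives that $\circ$ sends $\left(\begin{smallmatrix} X & a & Y \\ 0 & 1 & b \\ 0 & 0 & Z\end{smallmatrix}\right)$ to $\left(\begin{smallmatrix} Z^\diamond & S_m b^t & Y^\diamond \\ 0 & 1 & a^t S_m \\ 0 & 0 & X^\diamond\end{smallmatrix}\right)$. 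Thus $D^\circ=D$ becomes $X=Z^\diamond$, $Y^\diamond=Y$ and $a=S_m b^t$. Computing $C^\circ$ for the prescribed $C$ and using these identities I again obtain $C^\circ=\left(\begin{smallmatrix} Z^\diamond & a & \frac12 Y \\ 0 & 1 & 0 \\ 0 & 0 & E_m\end{smallmatrix}\right)$, and the product $CC^\circ$ reproduces $D$ block by block: the column $a$ and row $b$ come out correctly, and the top-right block is once more $\tfrac12Y+\tfrac12Y=Y$.

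The computation is elementary throughout, so there is no deep obstacle; the only points requiring care are getting the off-diagonal blocks of $D^*$ right — they are anti-transposes $S_m(\cdot)^tS_m$ rather than ordinary transposes — and verifying in the $s$-case that the sign introduced by conjugating with $J$ exactly matches the sign in the condition $Y^\diamond=-Y$, so that $C^s$ has the same form as $C^\circ$. Once these are in place, the identity $D=CC^*$ is immediate.
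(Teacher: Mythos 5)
Your proof is correct. Note that the paper itself gives no argument for this statement: it is quoted verbatim from \cite[Lemma 2.4]{DiVKoshLaS}, so there is no internal proof to compare against. Your block computation --- writing $S$ in $(m,m)$ or $(m,1,m)$ block form, expressing $\circ$ and $s$ via the anti-transpose $W^\diamond = S_m W^t S_m$, translating $D^*=D$ into the conditions $X=Z^\diamond$, $Y^\diamond=\pm Y$ (and $a=S_m b^t$ in the odd case), and then verifying $CC^*=D$ block by block --- is exactly the kind of direct verification the cited source relies on, and all the delicate points (the anti-transpose rather than the transpose in the off-diagonal blocks, and the sign cancellation in the $s$-case) are handled correctly.
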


From the proof of Theorem 5.4 of \cite{Valenti-Zaicev} one can deduce the following result. For the sake of completeness, we present this proof here.

\begin{proposition}\label{iso}
	Let $F$ be an algebraically closed field of characteristic zero and let $R=UT_n=\oplus_{g\in G}R_g$ be the algebra of $n\times n$ upper-triangular matrices over $F$, graded by a finite abelian group $G$. Suppose $*:R\longrightarrow R$ is an involution on $R$ given by $X^*=B^{-1}X^\circ B$, for some nonsingular matrix $B\in R_e$. Then $R$ is isomorphic as a graded algebra with involution, to $UT_n$ with an elementary $G$-grading and with the involution $\circ$ or $s$. The involution $s$ can occur only if $n$ is even.
\end{proposition}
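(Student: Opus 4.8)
The plan is to reduce the abstract involution $*$ to one of the two model involutions $\circ$ and $s$ by conjugating with a suitable homogeneous matrix, exploiting the decomposition results of Lemma~\ref{inv.utn}. By Theorem~\ref{gradingUTn} the grading on $R=UT_n$ is elementary, defined by some $\overline g=(g_1,\dots,g_n)$, so I may work with matrix units and with the explicit description of $R_e$ as the span of those $e_{ij}$ with $g_i=g_j$. First I would extract the symmetry type of $B$: since $*$ is an involution, imposing $(X^*)^*=X$ and using that $\circ$ is an antiautomorphism with $(Y^\circ)^\circ=Y$ gives, for all $X$,
\[(X^*)^*=B^{-1}B^\circ\,X\,(B^\circ)^{-1}B,\]
so $B^{-1}B^\circ$ is central, i.e. $B^\circ=\lambda B$ for some scalar $\lambda$; applying $\circ$ once more forces $\lambda^2=1$, hence $B^\circ=\pm B$. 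This splits the argument into a symmetric case ($B^\circ=B$) and a skew case ($B^\circ=-B$).

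In the symmetric case, note that $B^{-1}$ is again $\circ$-symmetric and, because $B\in R_e$, it is homogeneous of degree $e$. Lemma~\ref{inv.utn} then yields $B^{-1}=CC^\circ$ for an explicit $C\in UT_n$. The key point is that this $C$ is itself homogeneous of degree $e$: in the block formula of Lemma~\ref{inv.utn} every nonzero entry of $C$ sits either on the diagonal or in one of the blocks copied directly from $B^{-1}$, hence at a position $(i,j)$ with $g_i=g_j$. Therefore conjugation $\psi(X)=C^{-1}XC$ is a graded automorphism of $R$ (it fixes each $R_g$ since $C,C^{-1}\in R_e$ and $G$ is abelian), and a direct computation shows that the relation $BCC^\circ=I$ is exactly what makes $\psi(X^*)=\psi(X)^\circ$ hold for all $X$. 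Thus $(R,*)$ is graded-isomorphic to $(UT_n,\circ)$ with the same elementary grading.

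In the skew case I would first observe, from $B^t=-SBS$ together with $(\det S)^2=1$, that $\det B=(-1)^n\det B$, so nonsingularity of $B$ forces $n$ even; this is also the parity under which $s$ is defined. Writing $n=2m$ and $J=\diag(I_m,-I_m)$, one checks $J^\circ=-J$, whence the matrix $B^{-1}J$ is $s$-symmetric (that is, $(B^{-1}J)^s=B^{-1}J$) and still homogeneous of degree $e$. Applying Lemma~\ref{inv.utn} with $*=s$ gives $B^{-1}J=CC^s$ with $C$ homogeneous of degree $e$ by the same block inspection, and conjugation by $C$ now intertwines $*$ with $s$; using $C^s=JC^\circ J$ and $J^2=I$ this reduces to the identity $CJC^\circ=B^{-1}$, i.e. to the centrality of $BCJC^\circ$.

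Combining the two cases with Theorem~\ref{gradingUTn} yields the statement, the involution $s$ occurring only for $n$ even as required. The step I expect to be the main obstacle is the homogeneity of the conjugating matrix $C$: everything hinges on the fact that the factorization of Lemma~\ref{inv.utn} can be performed inside $R_e$, and this must be read off carefully from the explicit block shape of $C$ rather than from any abstract property of the decomposition. By comparison, the determinant parity argument and the bookkeeping identities $J^\circ=-J$ and $C^s=JC^\circ J$ are routine.
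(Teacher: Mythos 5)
Your proposal is correct and follows essentially the same route as the paper's proof: reduce to an elementary grading via Theorem~\ref{gradingUTn}, split according to $B^\circ=\pm B$, factor $B^{-1}$ (resp.\ $(JB)^{-1}=B^{-1}J$, the same matrix you use) as $CC^\circ$ (resp.\ $CC^s$) using Lemma~\ref{inv.utn}, note that $C\in R_e$, and conjugate by $C$; your added details (the centrality argument for $B^\circ=\pm B$, the determinant parity argument, the block inspection giving $C\in R_e$) only make explicit what the paper asserts. The one detail you gloss over is that applying Lemma~\ref{inv.utn} for odd $n$ requires the middle diagonal entry of $D=B^{-1}$ to equal $1$, which the paper arranges by rescaling $B$ --- harmless, since $B$ is determined by $*$ only up to a scalar.
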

 
\begin{proof}
	By Theorem \ref{gradingUTn}, we may assume that the $G$-grading is elementary. Since $*$ is an involution, $B$ satisfies $B^\circ=B$ or $B^\circ=-B$.
	
	Suppose first that $B^\circ=B$. Since $B$ is invertible and $B\in R_e$, we also have $B^{-1}\in R_e$ and $(B^{-1})^\circ=B^{-1}$. Note that $B$ is defined up to a scalar multiple. Then we can apply Lemma \ref{inv.utn} to $D=B^{-1}$ assuming $d_{m+1,m+1}=1$ in case $n=2m+1$. Hence $D=CC^\circ$, and we since $D\in R_e$, from Lemma \ref{inv.utn}, it follows that $C\in R_e$. Replacing $B$ with $(C^{\circ})^{-1}C^{-1}$, we get $X^*=CC^\circ X^\circ (C^\circ)^{-1}C^{-1}$ and then $C^{-1}X^*C=C^{\circ}X^\circ(C^\circ)^{-1}=(C^{-1}XC)^\circ$.
	
	Let now $\varphi:R\longrightarrow R$ be defined by $\varphi(X)=C^{-1}XC$. Then $\varphi$ is an automorphism of $R$ preserving the $G$-degree, since $C\in R_e$. Moreover, by the above considerations, it follows that $\varphi(X^*)=\varphi(X)^\circ$, i.e., $\varphi$ is an isomorphism of graded algebras with involution. This completes the proof in case $B$ is a symmetric matrix with respect to $\circ$.
	
	Now suppose $B^\circ=-B$. Then $n=2m$ is even, since $B$ is non-singular. We follow the argument of the proof of \cite[Proposition 2.5]{DiVKoshLaS}. Namely, for $J=\begin{pmatrix}
	E_m & 0 \\
	0 & -E_m
	\end{pmatrix}$, we can write 
	
	\begin{align*}
	X^* & 	=  B^{-1}X^\circ B\\
		&	=  B^{-1}J^{-1}JX^\circ J^{-1}JB\\
		&	=  (JB)^{-1}JX^\circ J{-1}(JB)\\
		&	=(JB)^{-1}X^s(JB)
	\end{align*}
	 and $(JB)^s=B^sJ^s=JB^\circ J^{-1}(-J)=-JB^\circ=JB$. Hence, we can apply Lemma \ref{inv.utn} to $D=(JB)^{-1}$. As before, $D=CC^s$, and $\varphi(X)=C^{-1}XC$ satisfies $\varphi(X^*)=\varphi(X)^s$, i.e., $\varphi$ is an isomorphism of graded algebras with involution.
\end{proof}
 
 As a consequence, we obtain the main result of this section.
 
\begin{corollary}
	Let $F$ be an algebraically closed field of characteristic zero and let $R=UT_n=\oplus_{g\in G} R_g$ be the algebra of $n\times n$ upper-triangular matrices over $F$, graded by a finite abelian group $G$. Suppose $R$ is endowed with a degree-inverting involution $*$. Then $R$ as a graded algebra with involution is isomorphic to $UT_n$ with an elementary $G$-grading defined by an $n$-tuple $\overline{g}=(g_1,\dots,g_n)$ such that $g_1^{-1}g_n=g_2^{-1}g_{n-1}=\cdots=g_n^{-1}g_1$ and with the involution $\circ$ or $s$. The involution $s$ can occur only if $n$ is even.
\end{corollary}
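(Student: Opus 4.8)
The plan is to assemble the preparatory propositions of this section rather than to prove anything from scratch. First I would invoke Theorem~\ref{gradingUTn} to reduce to the case where the $G$-grading on $R=UT_n$ is \emph{elementary}, say defined by an $n$-tuple $\overline{g}=(g_1,\dots,g_n)\in G^n$: every $G$-grading is graded-isomorphic to such a one, and a graded isomorphism carries a degree-inverting involution to a degree-inverting involution, so no generality is lost. Next I would put the involution into a workable normal form. By the classification of involutions on $UT_n$ recalled from \cite{DiVKoshLaS}, together with the fact that every algebra automorphism of $UT_n$ is inner, the automorphism $X\mapsto (X^\circ)^*$ is a conjugation; hence $*$ can be written as $X^*=B^{-1}X^\circ B$ for some invertible $B\in UT_n$. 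Since $*$ has order two, applying it twice forces $B^{-1}B^\circ$ to be central, and as the center of $UT_n$ consists of scalars one gets $B^\circ=\pm B$.

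With $*$ in this form the substantive work has already been carried out in the earlier results. Because $*$ is degree-inverting, Proposition~\ref{homogeneous} shows that $B$ is homogeneous of degree $e$, i.e.\ $B\in R_e$. This is precisely the hypothesis of Proposition~\ref{iso}, which I would apply to conclude that $(R,*)$ is isomorphic, \emph{as a $G$-graded algebra with involution}, to $UT_n$ equipped with an elementary $G$-grading and with the involution $\circ$ or $s$, the case $s$ occurring only when $n$ is even (forced by the non-singularity of a skew matrix $B^\circ=-B$, which requires even size). Moreover the isomorphism produced there is conjugation by an element of $R_e$, hence degree-preserving, so the defining tuple $\overline{g}$ is left unchanged.

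Finally I would pin down the relation among the $g_i$. Since $*$ is degree-inverting on $(R,\overline{g})$, Corollary~\ref{iffcirc} gives that $\circ$ is itself degree-inverting for the same elementary grading, and then Proposition~\ref{circ} translates this directly into the chain of equalities $g_1^{-1}g_n=g_2^{-1}g_{n-1}=\cdots=g_n^{-1}g_1$. Combining this with the isomorphism-type statement of the previous paragraph yields the corollary.

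The step I expect to require the most care is the passage from the abstract classification of \cite{DiVKoshLaS} to the concrete conjugation form $X^*=B^{-1}X^\circ B$: one must know that the isomorphism of algebras-with-involution is realized by an inner automorphism of $UT_n$, and that the resulting $B$ can be taken to satisfy $B^\circ=\pm B$. Everything downstream is bookkeeping, but some attention is needed to confirm that the degree-preserving nature of the isomorphism in Proposition~\ref{iso} keeps the tuple $\overline{g}$, and hence the relation it must satisfy, intact across the identification.
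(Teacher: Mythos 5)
Your proposal is correct and takes essentially the same route as the paper's own proof: reduce to an elementary grading, write $X^*=B^{-1}X^\circ B$, use Proposition~\ref{homogeneous} to conclude $B\in R_e$, apply Proposition~\ref{iso} to obtain the graded isomorphism with $(UT_n,\circ)$ or $(UT_n,s)$, and derive the relation $g_1^{-1}g_n=\cdots=g_n^{-1}g_1$ from Corollary~\ref{iffcirc} and Proposition~\ref{circ}. The only differences are cosmetic: you justify the normal form $X^*=B^{-1}X^\circ B$ with $B^\circ=\pm B$ in more detail than the paper does, while the paper additionally records the (converse) sufficiency check, which the statement as written does not require.
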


\begin{proof}
	First notice that if $*=\circ$ and the elementary grading induced by $(g_1,\dots, g_n)$ satisfies  $g_1^{-1}g_n=g_2^{-1}g_{n-1}=\cdots=g_n^{-1}g_1$, then by Proposition \ref{circ} $\circ$ is a degree-inverting involution. Since $X^s=J^{-1}X^\circ J$, and $\deg(J)=e$, $s$ is degree inverting if and only if $\circ$ is so. So if $*=s$ and the grading satisfies $g_1^{-1}g_n=g_2^{-1}g_{n-1}=\cdots=g_n^{-1}g_1$, then $s$ is a degree-inverting involution.
	
	Let now $R=\oplus_{g\in G}R_g$ be a $G$-grading on $R$. If $*$ is a degree-inverting involution on $R$, there exists a nonsingular matrix $B$ such that $X^*=B^{-1}X^\circ B$, for all $X\in UT_n$. By Proposition \ref{homogeneous}, $\deg(B)=0$ and by Proposition \ref{iso}, $R$ is isomorphic as a graded algebra with involution, to $UT_n$ with an elementary grading defined by an $n$-tuple $\overline{g}=(g_1,\dots,g_n)\in G^n$ and with the involution $\circ$ or $s$. The involution $s$ can occur only if $n$ is even. In any case, the involution satisfies $g_1^{-1}g_n=g_2^{-1}g_{n-1}=\cdots=g_n^{-1}g_1$.	
\end{proof}

\begin{flushleft}
	\textbf{Acknowledgements}
\end{flushleft}
This work was started when the authors were visiting IMPA in (brazilian) summer 2016. The authors would like to thank IMPA for the hospitality and for the financial support.

\end{document}